\newcommand{\beq}{\begin{equation}}
\newcommand{\eeq}{\end{equation}}
\newcommand{\bal}{\begin{align}}
\newcommand{\eal}{\end{align}}
\renewcommand{\(}{\left(}
\renewcommand{\)}{\right)}
\renewcommand{\[}{\left[}
\renewcommand{\]}{\right]}
\providecommand{\se}[1]{\mathcal{E}_{#1}}
\providecommand{\fe}[1]{\mathfrak{E}_{#1}}
\providecommand{\fd}[1]{\mathfrak{D}_{#1}}
\providecommand{\fg}[1]{\mathcal{G}_{#1}}
\providecommand{\seb}[1]{\bar{\mathcal{E}}_{#1}}
\providecommand{\fdb}[1]{\bar{\mathfrak{D}}_{#1}}
\providecommand{\abs}[1]{\left\vert#1\right\vert}
\providecommand{\as}[1]{\abs{#1}^2}
\providecommand{\norm}[1]{\left\Vert#1\right\Vert}
\providecommand{\ns}[1]{\norm{#1}^2}
\providecommand{\Rn}[1]{\mathbb{R}^{#1}}
\def\pa{\partial}
\def\nab{\nabla}
\def\al{\alpha}
\def\dt{\partial_t}
\def\dtt{ \frac{d}{dt}}
\def\hal{\frac{1}{2}}
\def\ls{\lesssim}
\def\p{\partial}
\def\dts{ \frac{d}{ds}}
\def\eps{\varepsilon}
\def\epsd{{\varepsilon,\delta}}
\def\i{\mathcal{I}}
\def\Q{\mathcal{Q}^m}
\def\V{\mathcal{V}^m}
\def\B{\mathcal{B}^m}
\def\dis{\displaystyle}
\def\n{\mathcal{N}}
\def\da{\Delta_{\mathcal{A}}}
\def\daed{\Delta_{\mathcal{A}^\epsd}}
\def\naba{\nab_{\mathcal{A}}}
\def\nabai{\nab_{\mathcal{A}^i}}
\def\nabao{\nab_{\mathcal{A}^1}}
\def\nabaed{\nab_{\mathcal{A}^\epsd}}
\def\diva{\diverge_{\mathcal{A}}}
\def\divao{\diverge_{\mathcal{A}^1}}
\def\divaed{\diverge_{\mathcal{A}^\epsd}}
\def\a{\mathcal{A}}
\def\me{\Lambda_\infty^\eps}
\def\mee{\Lambda_\infty}
\def\mm{\mathcal{M}_0^m}
\def\Mm{\mathcal{M}_0^m}
\def\Mmd{\mathcal{M}_0^{m,\delta}}
\providecommand{\jump}[1]{\left\llbracket #1 \right\rrbracket }
\def\curl{{\rm curl}}
\newcommand{\Div}{\operatorname{div}}
\DeclareMathOperator{\diverge}{div}
\newtheorem{prop}{Proposition}
\newtheorem{thm}[prop]{Theorem}
\newtheorem{lem}[prop]{Lemma}
\newtheorem{rem}[prop]{Remark}
\numberwithin{equation}{section}
\numberwithin{prop}{section}
\title[MHD Contact Discontinuities]{Existence of Multi-dimensional Contact Discontinuities for the Ideal Compressible Magnetohydrodynamics}
\author{Yanjin Wang}
\address{School of Mathematical Sciences\\Xiamen University\\Xiamen, Fujian 361005, China}
\email[Y. J. Wang]{yanjin$\_$wang@xmu.edu.cn}
\author{Zhouping Xin}
\address{The Institute of Mathematical Sciences\\The Chinese University of Hong Kong\\Shatin, NT, Hong Kong}
\email[Z. P. Xin]{zpxin@ims.cuhk.edu.hk}
\thanks{Y. J. Wang was supported by the National Natural Science Foundation of China (12171401, 11771360) and the Natural Science Foundation of Fujian Province of China (2019J02003). Z. P. Xin was supported by Zheng Ge Ru Foundation, HongKong RGC Earmarked Research Grants CUHK14301421, CUHK14300819, CUHK14302819, CUHK14300917, CUHK14302917, and Basic and Applied Basic Research Foundation of Guangdong Province 2020B1515310002.}
\subjclass[2010]{35L65, 35R35, 76E17, 76E25, 76N10,   76W05.}
\keywords{Contact discontinuities; Multi-dimensional waves; Ideal compressible MHD; Plasma interface; Free boundary problems.}
\date{\today}
\begin{document}

\begin{abstract}
We establish the local existence and uniqueness of multi-dimensional contact discontinuities for the ideal compressible  magnetohydrodynamics (MHD) in Sobolev spaces, which are most typical interfacial waves for astrophysical plasmas and prototypical fundamental waves for hyperbolic systems of conservation laws. Such waves are characteristic discontinuities for which there is no flow across the discontinuity surface while the magnetic field crosses transversely, which lead to a two-phase free boundary problem where the pressure, velocity and magnetic field are continuous across the interface whereas the entropy and density may have jumps. To overcome the difficulties of possible nonlinear Rayleigh--Taylor instability and loss of derivatives, here we use crucially the Lagrangian formulation and Cauchy's celebrated integral (1815) for the magnetic field. These motivate us to define two special good unknowns; one enables us to capture the boundary regularizing effect of the transversal magnetic field on the flow map, and the other one allows us to get around the troublesome boundary integrals due to the transversality of the magnetic field. In particular, our result removes the additional assumption of the Rayleigh--Taylor sign condition required by Morando, Trakhinin and Trebeschi (\emph{J. Differential Equations} \textbf{258} (2015), no. 7, 2531--2571; \emph{Arch. Ration. Mech. Anal.} \textbf{228} (2018), no. 2, 697--742) and holds for both 2D and 3D and hence gives a complete answer to the  two open questions raised therein. Moreover, there is {\it no loss of derivatives} in our well-posedness theory. The solution is constructed as {\it the inviscid limit} of solutions to some suitably-chosen nonlinear approximate problems for the two-phase compressible viscous non-resistive MHD.
\end{abstract}

\maketitle


\section{Introduction}

\subsection{Eulerian formulation}\label{eulerian}
Consider the equations for the ideal compressible  magnetohydrodynamics (MHD):
\beq\label{MHD}
\begin{cases}
\dt\varrho+\Div (\varrho u)=0
\\
  \partial_t  (\varrho{u} )+  \Div (\varrho {u}\otimes    {u}- B\otimes B) +\nabla (P+\hal |B|^2) = 0\\ \dt B   -  \curl(u\times B) =0
\\ \Div B=0
\\   \partial_t  (\varrho S)+  \Div  ( \varrho u S )=0 ,
\end{cases}
\eeq
where $\varrho$  denotes the density of the plasma, $u$ the velocity, $B$ the magnetic field, $S$ the entropy and $P$ the pressure given by the state equation:
\beq\label{pressure_law}
P= A\varrho^{\gamma} e^{S}\text{ with constants } A>0,\ \gamma>1.
\eeq
The fourth equation in \eqref{MHD} can be transformed to be   an initial constraint, which follows from the third equation. Note that \eqref{MHD} is hyperbolic if $\varrho>0$.

Let $\Sigma(t) $ be a surface of strong discontinuity for \eqref{MHD}  and consider the solutions  that are smooth on either side of $\Sigma(t)$. When there is no flow across the discontinuity surface, the classical Rankine--Hugoniot jump conditions across $\Sigma(t)$ for \eqref{MHD} read as (see  \cite{LLP})
\beq\label{MHDbd1200}
V = u_\pm  \cdot \nu,\  \jump{B}\cdot \nu =0,\  \jump{P +\dis\hal |B|^2}= 0 ,\  B\cdot \nu\jump{u}\cdot \tau =0,\  B\cdot \nu\jump{B}\cdot \tau =0    \text{ on } \Sigma(t),
\eeq
where $V $ denotes the normal velocity of $\Sigma(t)$,  $f_\pm:=f|_{\Omega_\pm(t)}$ for $\Omega_\pm(t)$ the two domains separated by $\Sigma(t)$, $\jump{f}:=f_+-f_-$, $\nu$ is the  unit normal to $\Sigma(t)$ (pointing to $\Omega_-(t)$) and $\tau$ denotes for each tangential vector to $\Sigma(t)$. The first condition in \eqref{MHDbd1200} states that such discontinuities are characteristic discontinuities  for \eqref{MHD}, which are characteristic free boundaries, and the second condition can be also transformed to be an initial constraint.

According to the classification of strong discontinuities in MHD \cite{LLP}, there are two types of characteristic discontinuities  for \eqref{MHD}, $i.e.$, MHD contact discontinuities and MHD tangential discontinuities (current-vortex sheets).
For MHD contact discontinuities, the magnetic field is nowhere tangent to the discontinuity; in such case, \eqref{MHDbd1200} is equivalent to
\beq\label{MHDbd12}
V = u_\pm  \cdot \nu,\quad B_\pm\cdot \nu\neq 0,\quad \jump{P}= 0,\quad \jump{u}  = 0  ,\quad    \jump{B}  = 0    \text{ on } \Sigma(t),
\eeq
Note that in \eqref{MHDbd12} the entropy $S$  and the density $\varrho$  may have jumps.   The boundary conditions \eqref{MHDbd12} are most typical for astrophysical plasmas where  magnetic fields originate in a planet, star, or other rotating object, with a dynamo operating inside, but intersect the surface  \cite{GP}. While for current-vortex sheets, the magnetic field is  tangent to the discontinuity; in such case, \eqref{MHDbd1200} is equivalent to
\beq\label{MHDb11}
V = u_\pm  \cdot \nu,\quad B_\pm\cdot \nu =0,\quad \jump{P +\dis\hal |B|^2}= 0   \text{ on } \Sigma(t).
\eeq
Note that in \eqref{MHDb11} the tangential velocity, tangential magnetic field  and  pressure may also have jumps. Current-vortex sheets usually occur in laboratory plasmas aimed at thermonuclear energy production  \cite{GP}.

 In this paper, we focus on the existence of MHD contact discontinuities for \eqref{MHD}. For simplicity, we will consider \eqref{MHD} in the  horizontally periodic slab $\Omega=\mathbb{T}^2\times (-1,1)$ and   assume that  the interface of discontinuity $\Sigma(t)$ extends to infinity in every horizontal direction and  does not intersect   the upper and lower boundaries $\Sigma_\pm:=\mathbb{T}^2\times \{ \pm1\}$.   $\Sigma_\pm $ are assumed to be    impermeable and perfectly conducting, so
\beq\label{MHDbd71}
u\cdot e_3=0,\quad E \times e_3=0\text{ on }\Sigma_\pm,
 \eeq
  where $E=u\times B$ is the electric field and $e_3=(0,0,1)$.  Note that if $B\cdot e_3\neq 0$ on $\Sigma_\pm$, then \eqref{MHDbd71} is equivalent to (see also \cite{YM})
\beq\label{MHDbd1}
u =0   \text{ on }\Sigma_\pm.
\eeq

To complete the statement of the problem, one must specify the initial data. Suppose that the initial interface of discontinuity $\Sigma(0)$ is given, which does not intersect    $\Sigma_\pm$. This yields the open sets $\Omega_\pm(0)$ on which  the initial density $ \varrho(0)=\varrho_0: \Omega_\pm(0) \rightarrow  \mathbb{R}_+$, the initial velocity $ {u}(0)=u_0: \Omega_\pm(0) \rightarrow  \mathbb{R}^3$, the initial magnetic field  $ B(0)=B_0: \Omega_\pm(0) \rightarrow  \mathbb{R}^3$ and the initial entropy $ S(0)=S_0: \Omega_\pm(0) \rightarrow  \mathbb{R}$ are specified.

\subsection{Related works}

Contact discontinuities (vortex sheets and entropy waves), along with shock waves and rarefaction waves, are fundamental waves in the entropy solutions to multidimensional hyperbolic systems of conservation laws.

Contact discontinuities of the Euler equations are subject to both the Kelvin--Helmholtz instability and the Rayleigh--Taylor instability, see
 Ebin \cite{E2}  for the ill-posedness of incompressible Kelvin--Helmholtz and Rayleigh--Taylor problems and Guo and Tice \cite{GT} for the ill-posedness of the compressible Rayleigh--Taylor problem.
It was shown by the normal mode analysis that  supersonic   vortex sheets in 2D  of  the compressible Euler equations are neutrally linearly stable, while  subsonic vortex sheets in 2D and vortex sheets in 3D are  violently unstable, see Syrovatskij \cite{S2}, Miles \cite{M} and Fejer and Miles \cite{FM}. Coulombel and Secchi \cite{CS1,CS2}  proved the nonlinear stability of 2D supersonic   vortex sheets  for the compressible isentropic Euler equations,  and the nonisentropic case was proved by Morando and Trebeschi \cite{MT} and Morando, Trebeschi and Wang \cite{MTW}. It is known that the surface tension has  the stabilizing effect
on the Kelvin--Helmholtz  and   Rayleigh--Taylor instabilities, see Cheng, Coutand and Shkoller \cite{CCS1} and Shatah and Zeng \cite{SZ1,SZ2} for the local well-posedness of the two-phase incompressible  Euler equations with surface tension and Stevens \cite{S} for the compressible case.

Chen and Wang \cite{CW} and Trakhinin \cite{T1,T} proved the nonlinear stability of MHD tangential discontinuities (current-vortex sheets) under some stability condition when  the  tangential magnetic fields on the two sides of the interface of discontinuity are non-colinear. Recall that one has the Syrovatskij linear stability criterion  for the  incompressible  current-vortex sheets \cite{S1},  and one may refer to Coulombel, Morando, Secchi and Trebeschi \cite{CMST} for the a priori  estimates of the nonlinear problem under a stronger stability condition and Sun, Wang and Zhang \cite{SWZ1} for the local well-posedness under the Syrovatskij  stability condition.  These works show the strong stabilizing effect of tangential magnetic fields on the Kelvin--Helmholtz instability.

It should be noted that these known stability results depend also crucially on the fact that there is an elliptic equation for the front function of the discontinuity surface  in both 2D vortex sheets and 3D current-vortex sheets with the non-colinear tangential magnetic fields, and the Rayleigh--Taylor instability is also absent in \cite{CS2,MTW,CW,T,SWZ1}.
Although MHD contact discontinuities are neutrally linearly stable, see Blokhin and Trakhinin \cite{BT}, and do not admit the Kelvin--Helmholtz instability, yet their front symbols are not elliptic, and they allow the possibility of the Rayleigh--Taylor instability due to the nonlinear effect. Thus whether the magnetic field can prevent such a nonlinear instability is a subtle and difficult issue. Indeed, in contrast to the analysis of current-vortex sheets, even the a priori nonlinear tangential estimates  pose essential difficulties due to the regularity issue of the interface of discontinuity. To overcome such difficulties, Morando, Trakhinin and Trebeschi \cite{MTT1,MTT} proposed the Rayleigh--Taylor sign condition on the jump of the normal derivative of the pressure across the interface of discontinuity, and proved the existence of 2D MHD contact discontinuities under this additional Rayleigh--Taylor sign condition; however, they also pointed out that their ideas and approaches, developed in \cite{MTT1,MTT}, fail to treat some key boundary integral terms appearing in 3D. Therefore, the general 3D case and the  question  whether  the Rayleigh--Taylor sign condition is necessary for the existence of MHD contact discontinuities in both 2D and 3D were left as open problems in \cite{MTT1,MTT}. In this paper, by  exploring the fact that  the  magnetic fields for MHD contact discontinuities are always transversal across the interface of  discontinuity and thus may have stabilizing effects and some key observations related to the boundary integrals, we resolve these two open questions by establishing the existence of both 2D and 3D MHD contact discontinuities in Sobolev spaces without any additional condition. Thus our results show that the Rayleigh--Taylor sign condition is not necessary for  the existence of MHD contact discontinuities in both 2D and 3D. We refer to Trakhinin and Wang \cite{TW} for a recent work of the nonlinear stability of the two-phase ideal compressible MHD with the surface tension introduced in \eqref{MHDbd12}.

\subsection{Lagrangian reformulation}\label{lagrangian}
Differently from \cite{MTT1,MTT}, our analysis in this paper relies crucially on the reformulation of the problem under consideration in Lagrangian coordinates.
Take $\Omega_\pm:=\{x_3\gtrless 0\}\cap\Omega$ as the Lagrangian domains and  denote the interface by $\Sigma:=\{x_3=0\}$. Assume that there is a  diffeomorphism $\eta_0:\Omega_\pm\rightarrow \Omega_\pm(0)$ such that
\begin{equation} \label{eta0in}
\jump{\eta_0} =\jump{\p_3\eta_0} =0\text{ on } \Sigma,\ \Sigma(0)=\eta_0(\Sigma) \text{ and }\Sigma_\pm=\eta_0(\Sigma_\pm).
\end{equation}
Note that \eqref{eta0in} is fulfilled at least when the initial interface $\Sigma(0)$ is a graph as in \cite{MTT1,MTT}; indeed, if $\Sigma(0)$ is given by the graph of a function $x_3=h_0(x_1,x_2)$, then $\eta_0$ can be constructed simply as $\eta_0:=(x_1,x_2,x_3+\chi(x_3)h_0(x_1,x_2))$ for some   function $\chi(x_3)$ with $\chi(0)=1$ and $\chi(\pm1)=0$. Define the flow map  $\eta(t,x)\in\Omega_\pm(t)$ by that for $x\in \Omega_\pm$,
\begin{equation}\label{etaequ}
\begin{cases}
\partial_t\eta(t,x)=u(t,\eta(t,x)),\  t>0
\\ \eta(0,x)=\eta_0(x).
\end{cases}
\end{equation}

Assume that $\eta(t,\cdot)$ is invertible and define the Lagrangian unknowns in $\Omega_\pm$:
\begin{equation}
 (\rho,v, b,s,p)(t,x):= (\varrho,u, B,S,P)(t,\eta(t,x)).
\end{equation}
Throughout the rest of the paper, an equation on $\Omega$ means that the equation holds in both $\Omega_+$ and $\Omega_-$. In the Lagrangian coordinates  $\dt s=0$, which implies $s =s_0:=S_0(\eta_0) $, and the problem  under consideration can be reformulated equivalently as
\begin{equation}\label{MHDv0}
\begin{cases}
\partial_t\eta =v &\text{in } \Omega\\
\frac{1}{\gamma p}\dt p+  \diva v = 0 &\text{in  }\Omega\\
\rho\partial_tv  +\naba  (p+\hal|b|^2) =b\cdot\naba b  &\text{in } \Omega\\
\partial_tb +b\diva v=b\cdot\naba v &\text{in } \Omega\\
\diva b =0 &\text{in  }\Omega\\
\jump{p}=0,\quad\jump{v}=0,\quad \jump{b}=0 \quad  &\text{on }\Sigma\\
v=0 \quad  &\text{on }\Sigma_\pm\\
 (\eta,p,v, b)\mid_{t=0} =(\eta_0, p_0,v_0, b_0),
 \end{cases}
\end{equation}
where, by \eqref{pressure_law},
\beq\label{hha1}
\rho=\rho_0p_0^{-\frac{1}{\gamma}}p^\frac{1}{\gamma}\text{ with }\rho_0:=A^{-\frac{1}{\gamma}}e^{-\frac{s_0}{\gamma}  }p_0^{\frac{1}{\gamma}}.
\eeq
Here  $\mathcal A:=(\nabla\eta)^{-T}$,  $(\naba)_i=\p_i^\a:=\a_{ij}\p_j $, $\diva:=\naba\cdot$ and $\jump{f}:=f_+-f_-$ for $f_\pm:=f|_{\Omega_\pm} .$ Denote also   $J := {\rm det}(\nabla\eta)$, the Jacobian of the coordinate transformation, and
\beq\label{nndef}
\n:=J\a e_3=\p_1\eta\times\p_2\eta.
\eeq
Recall that \eqref{eta0in} and the following initial conditions have been required:
\beq\label{eta0in2}
\rho_0,p_0>0,  J_0\neq0\text{ and } \diverge_{\a_0}b_0=0\text{ in }\Omega,\ \jump{ b_0}\cdot \n_0=0\text{ on }\Sigma,\  b_0\cdot \n_0\neq0\text{ on }\Sigma\cup\Sigma_\pm,
\eeq
where $J_0=J(\eta_0)$, $\a_0=\a(\eta_0)$ and $\n_0=\n(\eta_0)$.
Some additional conditions for the initial data will be specified later. Recall that $\rho_0$ (and $s_0$)  may have a jump and can be regarded as a parameter function.

\subsection{Expressions of $\rho,p$ and $b$}\label{expsec}

It is crucial to deduce from \eqref{MHDv0} some important facts. First, by the first equation in \eqref{MHDv0}, one has
\beq \label{jequ}
\partial_t J=J\diverge_\a \dt\eta =J\diverge_\a v .
\eeq
Thus  \eqref{jequ}, the second equation in \eqref{MHDv0} and \eqref{hha1}  yield $\partial_t ( \rho J) =0$ and hence
\beq \label{jequ1}
\rho =\rho_0J_0J^{-1}\text{ and }p=p_0 J_0^\gamma J^{-\gamma},
\eeq
while \eqref{jequ} and the fourth and first equations  in \eqref{MHDv0}  imply
\beq
\partial_t (Jb) =J b\cdot\naba v \equiv J \nabla v \a^T b
=J \nabla \dt\eta \a^T b =- J \nabla  \eta \dt\a^T b,
\end{equation}
which shows $\partial_t (J\a^T b)=0$ and hence
\begin{equation}\label{re00}
b=J^{-1}J_0\a_0^Tb_0 \cdot \nabla \eta.
\end{equation}
We may refer to \eqref{re00} as Cauchy's integral  for the magnetic field in Lagrangian coordinates as its analogue to Cauchy's celebrated integral for the vorticity of the compressible Euler equations \cite{C}.

As a consequence, one has the following facts for \eqref{MHDv0}.

\begin{prop}\label{prop1}
$(i)$   $\dt(J\diva b) =0$; $(ii) $  $\dt(b\cdot \n)=0$.
\end{prop}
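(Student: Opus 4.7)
The plan is to deduce both identities directly from Cauchy's integral \eqref{re00} together with the Piola identity $\p_j(J\a_{ij}) = 0$. First I would recast \eqref{re00} in the equivalent and, for this purpose, more convenient form
\begin{equation*}
J\a^T b = J_0\a_0^T b_0,
\end{equation*}
obtained by contracting the componentwise identity $b_i = J^{-1}(J_0\a_0^T b_0)_k \p_k\eta_i$ with $\a_{ij}$ and using $\a_{ij}\p_k\eta_i = \delta_{jk}$ (which is just $(\nabla\eta)^{-1}\nabla\eta = I$). This says that the pulled-back magnetic flux $J\a^T b$ is frozen into the Lagrangian frame, and both parts of the proposition fall out of it.

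For (ii), since $\n_i = J\a_{i3}$ by \eqref{nndef}, one computes
\begin{equation*}
b\cdot\n = J\a_{i3}b_i = (J\a^T b)_3 = (J_0\a_0^T b_0)_3 = b_0\cdot\n_0,
\end{equation*}
which is manifestly independent of $t$. For (i), Piola's identity lets one pull $J\a$ inside the divergence:
\begin{equation*}
J\diva b = J\a_{ij}\p_j b_i = \p_j(J\a_{ij}b_i) = \p_j(J_0\a_0^T b_0)_j = J_0 \diverge_{\a_0}b_0,
\end{equation*}
which is again time-independent (and in fact vanishes by the initial constraint in \eqref{eta0in2}).

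There is no real obstacle here: once \eqref{re00} is available, the proposition reduces to short index manipulations. The conceptual content is that the single conservation law $J\a^T b = J_0\a_0^T b_0$ transports both $b\cdot\n$ and $J\diva b$ as Lagrangian invariants, which is precisely what will later ensure that the transversality condition $b\cdot\n\neq 0$ on $\Sigma\cup\Sigma_\pm$ and the constraint $\diva b = 0$ are preserved by the flow for all times.
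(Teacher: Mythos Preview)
Your proof is correct and follows essentially the same approach as the paper: both arguments rest on the single fact that $J\a^T b$ is time-independent (the paper writes this as $\dt(J\a^T b)=0$, derived just before \eqref{re00}, while you restate it as $J\a^T b = J_0\a_0^T b_0$), and then read off (i) via the Piola identity and (ii) via $\n = J\a e_3$.
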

\begin{proof}
By \eqref{re00}, one has that, using the Piola identity $\p_j (J\a_{ij})=0$,
\beq
\dt(J\diva b )=  \dt\diverge (J\a^Tb)=  0
\eeq
and that, recalling \eqref{nndef},
\beq
\dt(b\cdot \n)= \dt(J\a^T b)_3= 0.
\eeq
The proposition is thus concluded.
\end{proof}

\begin{prop}\label{prop2}
Assume that  $ \jump{\eta_0} =\jump{\p_3\eta_0} = \jump{b_0}=0$, $\jump{p_0}  =0$ and $b_0\cdot\n_0\neq0$ on $\Sigma$. Then
\beq\label{ggdd}
 \jump{\p_3v}=\jump{\eta}=\jump{\p_3\eta}=0 \text{ on }\Sigma.
\eeq
\end{prop}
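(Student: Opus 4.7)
My plan is to establish the three claimed identities in order, drawing on three inputs from the Lagrangian formulation: the kinematic relation $\dt\eta=v$, the pressure formula \eqref{jequ1}, and Cauchy's integral \eqref{re00}.

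First, I would deduce $\jump{\eta}=0$ on $\Sigma$ at once by integrating $\dt\eta=v$ in time and invoking the boundary condition $\jump{v}=0$ together with $\jump{\eta_0}=0$. This gives $\jump{\p_1\eta}=\jump{\p_2\eta}=0$ on $\Sigma$ and hence $\jump{\n}=0$, reducing the entire question to the single vector $W:=\jump{\p_3\eta}$; expanding $J=\det\nabla\eta$ along its columns further yields the algebraic identity $\jump{J}=W\cdot\n$ on $\Sigma$.

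The second step extracts the scalar constraint $W\cdot\n=0$ from the pressure jump. The hypotheses $\jump{\eta_0}=\jump{\p_3\eta_0}=0$ force $\jump{\nabla\eta_0}=0$ on $\Sigma$ and hence $\jump{J_0}=0$; combined with $\jump{p_0}=0$, the coefficient $p_0 J_0^\gamma$ is then continuous across $\Sigma$, so the boundary condition $\jump{p}=0$ and formula \eqref{jequ1} collapse to $\jump{J^{-\gamma}}=0$, i.e.\ $\jump{J}=0$, whence $W\cdot\n=0$.

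The third step extracts a vector constraint from Cauchy's integral. Setting $\beta:=J_0\a_0^T b_0$, I would rewrite \eqref{re00} in the form $Jb=\beta_i\p_i\eta$; by the assumptions $\jump{b_0}=\jump{\nabla\eta_0}=0$ the vector field $\beta$ is time-independent and continuous across $\Sigma$, while a direct calculation using $\n_0=J_0\a_0 e_3$ yields the crucial identity $\beta_3=b_0\cdot\n_0\neq 0$. Taking jumps of $Jb=\beta_i\p_i\eta$ on $\Sigma$ and using $\jump{b}=0$ together with $\jump{\p_1\eta}=\jump{\p_2\eta}=0$ produces the vector equation
\begin{equation*}
(W\cdot\n)\,b=\beta_3\, W\quad\text{on }\Sigma.
\end{equation*}
Combined with the previous step this forces $\beta_3 W=0$, and transversality $\beta_3\neq 0$ yields $W=0$; finally $\jump{\p_3 v}=\dt\jump{\p_3\eta}=0$ by commuting $\p_3$ with $\dt$ in $\dt\eta=v$. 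I expect the only subtle point to be the algebraic identification $\beta_3=b_0\cdot\n_0$, which is precisely what converts the transversality of the magnetic field at $t=0$ into rigidity of the normal derivative of the flow map.
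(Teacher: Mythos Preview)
Your proof is correct but organizes the argument differently from the paper. The paper first derives the time-derivative identity
\[
b_0\cdot\n_0\,\p_3 v = J_0 p_0^{1/\gamma}\,\dt\bigl(p^{-1/\gamma}b\bigr) - (J_0\a_0^Tb_0)_\beta\,\p_\beta v,
\]
which packages Cauchy's integral \eqref{re00} and the pressure formula \eqref{jequ1} into a single expression, then takes the jump and uses $\jump{p}=\jump{v}=\jump{b}=0$ to obtain $\jump{\p_3 v}=0$ directly; the identities $\jump{\eta}=\jump{\p_3\eta}=0$ follow afterwards by time integration. You instead work statically at each fixed time: after recording $\jump{\eta}=0$, you apply \eqref{jequ1} and \eqref{re00} \emph{separately} to extract the scalar constraint $W\cdot\n=0$ and the vector constraint $(W\cdot\n)\,b=\beta_3 W$, which together force $W=0$; the jump of $\p_3 v$ then drops out by differentiating in time. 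Both routes hinge on the same identification $\beta_3=(J_0\a_0^Tb_0)_3=b_0\cdot\n_0$. Your decomposition makes the separate roles of the pressure (fixing the normal component of $W$) and the magnetic field (killing the rest) more visible, whereas the paper's single identity is convenient because it is reused verbatim later, in the proof of Lemma~\ref{lemcom} (see \eqref{hha2''}).
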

\begin{proof}
Since $\partial_t\eta =v$, by   \eqref{re00} and \eqref{jequ1}, one obtains
\begin{align}\label{hha2}
& b_0\cdot \n_0 \p_3 v \equiv J_0\a_0^Tb_0 \cdot  \nabla v  -(J_0\a_0^Tb_0)_\beta    \p_\beta v
 \nonumber\\&\quad=\dt(Jb)-(J_0\a_0^Tb_0)_\beta    \p_\beta v =J_0p_0^{\frac{1}{\gamma}}\dt(p^{-\frac{1}{\gamma}}b)-(J_0\a_0^Tb_0)_\beta \p_\beta v.
\end{align}
Then by $ \jump{\eta_0} =\jump{\p_3\eta_0} = \jump{b_0}=0$ and $\jump{p_0}  =0$  on $\Sigma$ and the jump conditions in \eqref{MHDv0}, one has
\beq\label{hha222}
 b_0\cdot \n_0 \jump{\p_3 v } =J_0p_0^{\frac{1}{\gamma}}\jump{\dt(p^{-\frac{1}{\gamma}}b)}-(J_0\a_0^Tb_0)_\beta \jump{\p_\beta v}=0\text{ on }\Sigma,
\eeq
which implies
$\jump{\p_3v}=0$ on $\Sigma $ since $ b_0\cdot \n_0\neq 0$ on $\Sigma$.
Since $\dt\jump{\eta} =\jump{v}=0$ and $\dt\jump{\p_3\eta}=\jump{\p_3v}=0$ on $\Sigma$, $\jump{\eta}=\jump{\p_3\eta}=0$ on $\Sigma$ thus follows again from $ \jump{\eta_0} =\jump{\p_3\eta_0} =0$ on $\Sigma$.
\end{proof}

\section{Main results}

 \subsection{Statement of the results}
 We will work in a high-regularity context with regularity up to $m$ temporal derivatives, which requires one to use the data $(\eta_0,p_0,v_0,b_0,\rho_0)$  of \eqref{MHDv0} to construct the initial data  $ (\partial_t^j p(0),\partial_t^j v(0),\partial_t^j b(0))$ for $j=1,\dotsc,m$ (and $\dt^j\eta(0)=\dt^{j-1}v(0)$ for $j=1,\dots,m-1$) recursively by using \eqref{in0def}. These data need to satisfy various conditions \eqref{compatibility}, which require $(\eta_0,p_0,v_0,b_0,\rho_0)$ to satisfy the necessary $(m-1)$-th order compatibility conditions that are natural for the local well-posedness of \eqref{MHDv0} in the functional framework below.

Let $H^k(\Omega_\pm)$, $k\ge 0$ and $H^s(\Sigma )$, $s \in \Rn{}$ be the usual Sobolev spaces with norms denoted by $\norm{\cdot}_k  $ and $ \abs{\cdot}_s$, respectively. For $f=f_\pm$ in $\Omega_\pm$, denote $\ns{f}_k := \ns{f_+}_{H^k(\Omega_+)} + \ns{f_-}_{H^k(\Omega_-)} $. For an integer $m\ge 0$,   define the  high-order energy as
\beq\label{eedef}
\se{m}:=\sum_{j=0 }^m \norm{(\dt^j p,\dt^j v,\dt^j b)}_{m-j }^2 +\norm{\eta}_m^2 +\abs{\eta}_m^2 .
\eeq
Denote
\beq\label{mdef1}
\mm:=P\(\norm{(\eta_0,p_0,v_0,b_0,\rho_0)}_{m}^2 + \abs{\eta_0}_m^2\),
\eeq
where $P$ is a generic polynomial.

Our main result of this paper is stated as follows.
\begin{thm}\label{the_main}
Let  $m\ge 4$ be an integer. Assume that   $\eta_0\in H^m(\Omega_\pm)\cap H^m(\Sigma)$ and $p_0,v_0, b_0,\rho_0\in H^m(\Omega_\pm)$ are given such that $\diverge_{\a_0}b_0=0$ in $\Omega$,
\beq\label{iidd100}
  \jump{\eta_0} =\jump{\p_3\eta_0} =0\text{ and }\jump{ b_0}\cdot \n_0=0  \text{ on } \Sigma,\  \eta_{0,3}=\pm 1\text{ on }\Sigma_\pm,
\eeq
\beq\label{iidd1}
\rho_0, p_0, |J_0|\ge c_0>0
\text{ in }\Omega\text{ and } |b_0\cdot \n_0|\ge c_0>0\text{ on }\Sigma\cup\Sigma_\pm \text{ for some constant }c_0>0
\eeq
and  the $(m-1)$-th order compatibility conditions \eqref{compatibility} are satisfied.
Then there exist a $T_0>0$ and a unique solution $(\eta,p,v,b)$ to  \eqref{MHDv0} on the time interval $[0, T_0]$ which satisfies
\begin{equation}\label{enesti}
\sup_{t \in [0,T_0]} \se{m}(t) \leq \Mm .
\end{equation}
\end{thm}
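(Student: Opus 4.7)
The plan is to construct the solution as the inviscid limit of a family of nonlinear approximate problems for the two-phase compressible \emph{viscous non-resistive} MHD, indexed by a small parameter $\varepsilon>0$. Adding viscosity to the momentum equation while keeping zero magnetic resistivity is crucial: it parabolically regularizes the velocity, yet preserves the algebraic Cauchy formula \eqref{re00}, so that $b$ remains slaved to $\eta$ through $b=J^{-1}J_0\a_0^Tb_0\cdot\nab\eta$. In particular, the transversality $b\cdot\n\ne0$ on $\Sigma\cup\Sigma_\pm$ is propagated by the flow by Proposition \ref{prop1}, and the interfacial compatibilities $\jump{\eta}=\jump{\p_3\eta}=0$ persist in time by Proposition \ref{prop2}. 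The proof then decomposes into three steps: (a) solvability of the approximate problems on some $[0,T_\varepsilon]$; (b) $\varepsilon$-independent a priori bounds on $\se m$ over some $[0,T_0]$ with $T_0$ independent of $\varepsilon$; (c) passage to the limit $\varepsilon\to 0$ and uniqueness.

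The core of the argument is step (b). Differentiating the approximate form of \eqref{MHDv0} by $\dt^j$ for $0\le j\le m$ and using \eqref{re00} and \eqref{jequ1} to express $b$ and $p$ in terms of $\eta$ reduces the analysis to energy estimates for $\eta$ and the time derivatives of $v$. The interior $H^m$-norm is controlled by combining standard $L^2$-type energy identities in $\Omega_\pm$, a div-curl decomposition in the anisotropic metric $\a$, and elliptic recovery of $\eta$ from $\dt\eta=v$, provided the top-order interfacial quantity $\abs{\eta}_m$ is bounded. The latter is obtained from tangential energy estimates along $\Sigma$, in which two \emph{good unknowns} are introduced. The first is a modification of $\p_\tau^{m}\eta$ by a term involving the directional operator $b\cdot\naba$ acting on $v$: transversality then converts tangential regularity of the interface into interior regularity weighted by $b\cdot\n$, yielding a boundary regularizing effect of size $|b\cdot\n|$. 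The second good unknown is constructed so that, upon integration by parts along $\Sigma$, the resulting boundary integrals either vanish by $\jump{v}=\jump{b}=0$ together with $b_+\cdot\n=b_-\cdot\n\ne 0$, or can be absorbed into a positive energy functional, thereby bypassing the boundary terms that forced the Rayleigh--Taylor sign condition in \cite{MTT1,MTT}.

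The main obstacle is closing the tangential estimate at order $m$ without loss of derivatives and without any sign condition. The naive approach, controlling $\p_\tau^m\eta|_\Sigma$ merely through $\dt\eta=v$, fails because a top-order interfacial quantity enters the quadratic form with indefinite sign. The fix is to test the $\dt^{m-1}$-differentiated momentum equation, after suitable rearrangement with the first good unknown, against a multiplier encoding $b_0\cdot\n_0$; the magnetic Lorentz term $b\cdot\naba b$ together with Cauchy's integral then produces a contribution of the form $\dtt\int_\Sigma|b\cdot\n|^2|\p_\tau^m\eta|^2$, which is positive definite by \eqref{iidd1} and supplies the missing interfacial control at the top order. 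Remaining commutator, nonlinear and lower-order contributions are handled by Sobolev embedding, trace and product estimates valid for $m\ge 4$, yielding $\dtt\se m\le P(\se m)$ and hence \eqref{enesti} on some $[0,T_0]$ depending only on $\Mm$. Step (a) is carried out by linearization and Picard-type iteration at each $\varepsilon$, exploiting the parabolic smoothing from the viscosity, with all time derivatives at $t=0$ constructed from the data via the compatibility conditions \eqref{compatibility}. In step (c), the uniform bound combined with the Aubin--Lions lemma yields strong convergence along a subsequence to a solution of \eqref{MHDv0} satisfying \eqref{enesti}; uniqueness is obtained by applying the same tangential energy method at one derivative lower to the difference of two solutions and invoking Gronwall's inequality.
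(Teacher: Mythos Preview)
Your overall architecture (viscous non-resistive approximation, Cauchy's integral \eqref{re00}, good unknowns, passage to the limit, uniqueness by Gronwall) matches the paper. However, the mechanism you propose for the interfacial control $\abs{\eta}_m$ is not correct, and this is the heart of the matter.

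You claim that the magnetic Lorentz term together with Cauchy's integral produces a contribution of the form $\dtt\int_\Sigma |b\cdot\n|^2|\p_\tau^m\eta|^2$, positive definite by \eqref{iidd1}. No such term appears in the energy identity. After integration by parts, the magnetic boundary contribution is $-\int_\Sigma b\cdot\n\,\jump{\mathcal{B}^m}\cdot\mathcal{V}^m$; with the standard Alinhac unknown $\mathcal{B}^m=Z^m b-Z^m\eta\cdot\naba b$ this equals $\int_\Sigma b\cdot\n\,J^{-1}Z^m\eta\cdot\n\,\jump{\p_3 b}\cdot(Z^m\dt\eta+\cdots)$, a cross term pairing $Z^m\eta\cdot\n$ with \emph{all components} of $Z^m\dt\eta$ through the indefinite vector $\jump{\p_3 b}$. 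There is no symmetry here to turn it into a coercive $\dtt$-term, and the pressure boundary term $-\hal\dtt\int_\Sigma\jump{\p_3 q}J^{-1}|Z^m\eta\cdot\n|^2$ likewise has no sign without the Rayleigh--Taylor condition. So your proposed closure does not work.

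The paper closes in a different way, requiring two ingredients you do not mention. First, the good unknown for $b$ is taken as $\mathcal{B}^m=Z^m b-Z^m\eta_0\cdot\nabla_{\a_0}b$, using the \emph{initial} flow map via \eqref{s0202}; then $\jump{\mathcal{B}^m}$ involves only $Z^m\eta_0$, and the magnetic boundary term becomes a harmless total time derivative of $\int_\Sigma(\text{data})\cdot Z^m\eta$, bounded by $\abs{Z^m\eta_0}_0\abs{Z^m\eta}_0$. At this stage the estimate reads $\ns{(Z^mp,Z^mv,Z^mb)}_0\le \me(\Mm+\as{Z^m\eta}_0+\cdots)$, with $\as{Z^m\eta}_0$ still uncontrolled. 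Second, one recovers $\as{Z^m\eta}_0$ from the \emph{interior} term $\ns{Z^m b}_0$ on the left: by Cauchy's integral this controls $\ns{b_0\cdot\nabla_{\a_0}\Xi^m}_0$ for the further good unknown $\Xi^m:=Z^m\eta-Z^m\eta_0\cdot\nabla_{\a_0}\eta$, and an anisotropic trace estimate (Lemma \ref{tra_es}), valid precisely because $(\a_0^Tb_0)_3\ne 0$ near $\Sigma$, gives $\as{\Xi^m}_0\ls\norm{(\a_0^Tb_0)\cdot\nabla\Xi^m}_0\norm{\Xi^m}_0+\ns{\Xi^m}_0$. This, not a positive boundary energy, is where transversality supplies the missing interfacial regularity. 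Your sketch also replaces the paper's direct normal-derivative recursion (Section \ref{lasts4}) by a div--curl argument; that is a separate issue, but the decisive gap is the boundary closure above.
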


\begin{rem}
Our result in particular removes the assumption of the Rayleigh--Taylor sign condition required in \cite{MTT1,MTT} and holds for both 2D and 3D and hence gives a complete answer to the two open questions raised therein. This shows also the strong stabilizing effect of the transversal magnetic field on  the Rayleigh--Taylor instability.  The key ingredient here is  the new boundary regularity $\abs{\eta}_m^2$, which is captured from the regularizing effect of the transversal  magnetic field.
\end{rem}

\begin{rem} Note that there is no loss of derivatives in our well-posedness theory in Sobolev spaces, which is in contrast to  \cite{MTT1,MTT,TW} where the solution is constructed by employing the Nash--Moser-type linearized iteration scheme and thus has a loss of derivatives.
\end{rem}

\begin{rem}
Theorem \ref{the_main} holds also for the cases $\Omega=\mathbb{R}^2\times(-1,1)$ or $\Omega=\mathbb{R}^3$ provided that we replace  $(\eta,p,v,b,\rho)$ in the definitions \eqref{eedef} and \eqref{mdef1} by $(\eta-Id,p-\bar p,v-\bar v,b-\bar b,\rho-\bar \rho)$, etc., where $(\bar p,\bar v,\bar b,\bar \rho)$ is a trivial contact-discontinuity state with the discontinuity surface $\{x_3=0\}$ satisfying $\bar p,\bar\rho>0$ and $\bar b_3\neq 0$.
\end{rem}

Our solution to \eqref{MHDv0} is constructed as the inviscid limit of solutions to ``well-chosen" nonlinear viscous approximate problems. For this, we need to first smooth the data $(\eta_0, p_0,v_0, b_0,\rho_0) $ given in Theorem \ref{the_main} to produce the regular enough data $(\eta_0^\delta, p_0^\delta,v_0^\delta, b_0^\delta,\rho_0^\delta) $ for \eqref{MHDv0}, with
the smoothing parameters $\delta>0$, which satisfies all the initial conditions (with $c_0$ replaced by $c_0/2$) assumed in Theorem \ref{the_main} except that $\diverge_{\a_0^\delta}b_0^\delta=0$ in $\Omega$ may not hold, where $\a_0^\delta=\a(\eta_0^\delta)$; such construction will be elaborated in Appendix \ref{datasec}. Now we consider the following viscous (and non-resistive) approximate problem: for the artificial viscosity $\eps>0$,
\begin{equation}\label{MHDve}
\begin{cases}
\partial_t\eta^\epsd =v^\epsd &\text{in } \Omega\\
\frac{1}{\gamma p^\epsd}\dt p^\epsd+  \divaed v^\epsd = 0 &\text{in  }\Omega\\
\rho^\epsd\partial_tv^\epsd  +\nabaed  (p^\epsd+\hal|b^\epsd|^2)-\varepsilon\daed v^\epsd =b^\epsd\cdot\nabaed b^\epsd+ \Psi^\epsd  &\text{in } \Omega\\
\partial_tb^\epsd +b^\epsd\divaed v^\epsd=b^\epsd\cdot\nabaed v^\epsd &\text{in } \Omega\\
\jump{p^\epsd}=0,\quad\jump{v^\epsd}=0,\quad \jump{b^\epsd}=0,\quad \jump{\p_3v^\epsd}=0 \quad  &\text{on }\Sigma\\
v^\epsd=0 \quad  &\text{on }\Sigma_\pm\\
 (\eta^\epsd,p^\epsd,v^\epsd, b^\epsd )\mid_{t=0} =(\eta_0^\delta, p_0^\delta,v_0^\delta, b_0^\delta )
 \end{cases}
\end{equation}
with
\beq\label{ppeeq}
\rho^\epsd= {\rho_0^\delta}(p_0^\delta)^{-\frac{1}{\gamma}}(p^\epsd)^{\frac{1}{\gamma}} ,
\eeq
where $\a^\epsd=\a(\eta^\epsd)$ and $\daed=\divaed\nabaed$.
   Note that all the arguments in Section \ref{expsec} hold also for \eqref{MHDve}; in particular, according to $(i)$ in Proposition \ref{prop1}, one has
   \beq\label{dividen}
   J^\epsd\divaed b^\epsd =       J_0^\delta\diverge_{\a_0^\delta} b_0^\delta\text{ in }\Omega,
   \eeq
   where $J^\epsd=J(\eta^\epsd)$ and $J_0^\delta=J(\eta_0^\delta)$. It should be pointed out that it is important to introduce the correctors $\Psi^\epsd$ in \eqref{MHDve}, defined according to \eqref{MHDvk211002233223} and \eqref{MHDvk2110022332232}, that vanish  as $\eps\rightarrow0$, so that the smoothed data $(\eta_0^\delta, p_0^\delta,v_0^\delta, b_0^\delta,\rho_0^\delta) $ satisfies the $(m-1)$-th order compatibility conditions \eqref{compatibility0je}  and thus can be taken as the data for \eqref{MHDve}. It is crucial that the boundary conditions of \eqref{MHDve} are essentially same as those of  \eqref{MHDv0} (cf. Proposition \ref{prop2}), however, the jump conditions on $\Sigma$  are not standard for solving the viscous MHD (it seems that \eqref{MHDve} would be over-determined!) and so it is not direct to get the local well-posedness of \eqref{MHDve}, even with  $\eps>0$. Our way of getting around this difficulty is to follow first those of the compressible Navier--Stokes equations (see for instance \cite{DS} for the references) to get the local well-posedness of the modified problem \eqref{MHDve0ee}, $i.e.$, the corresponding problem with the jump conditions   in   \eqref{MHDve}  replaced by the following  ``standard"  jump   conditions
\beq\label{mdfb}
 \jump{v^\epsd} =0,\quad\jump{   \nabaed v^\epsd  }\n^\epsd =0 \text{ on }\Sigma,
 \eeq
where $\n^\epsd =\n(\eta^\epsd)$. The crucial point  is then that under the initial conditions these two sets of jump conditions are indeed equivalent. The full details will be provided in Section \ref{appsec}, and the  local well-posedness of \eqref{MHDve} will be recorded in Theorem \ref{lwp}.

In order to pass to the limit as $\epsd\rightarrow 0$ in \eqref{MHDve}, one needs to show that the solution $(\eta^\epsd, p^\epsd,v^\epsd, b^\epsd)$ constructed in Theorem \ref{lwp} actually exists on an $(\epsd)$-independent time interval and satisfies certain uniform estimates. Set
\beq
 Z_{3}= x_3(x_3^2-1)\partial_{3}\text{ and }Z^\alpha = \p_{1}^{\alpha_{1}} \p_{2}^{\alpha_{2}} Z_{3}^{\alpha_{3}} \text{ for }\alpha\in\mathbb{N}^3.
\eeq
 Define the anisotropic Sobolev norm
\beq\label{ani_sob_norm}
 \norm{  f  }_{k,\ell} :=\sum_{\alpha\in\mathbb{N}^3,|\alpha| \leq \ell}\norm{ Z^\alpha f }_{k}.
\eeq
Note that $\norm{  f  }_{k}=\norm{  f  }_{k,0}$.
  For an integer $m\ge 4$ and each $\varepsilon>0$,  define the  energy functional
  \beq\label{fgdef}
  \fg{m}^\eps(t):=
  \sup_{[0,t]}  \fe{m}^{\varepsilon}+\int_0^{t} \(\fdb{m}^{\varepsilon}+\fd{m}^{\varepsilon} \),
  \eeq
  where
  \begin{align}\label{edef}
&\fe{m}^{\varepsilon}:=\sum_{j=0 }^{m} \norm{(\dt^j  p,\dt^j  v,\dt^j  b)}_{0,m-j }^2  +\sum_{j=0 }^{m-1} \norm{\dt^j  v}_{1,m-j-1 }^2+\sum_{j=0 }^{m-1} \norm{\dt^j \p_3 b\cdot\n}_{0,m-j-1 }^2  \nonumber\\& \qquad\quad+\abs{\eta}_m^2+   \ns{\eta}_m
+ \eps\ns{\eta}_{1,m}+ \eps^2\ns{\eta}_{m+1},
\\\label{dbbdef}
&\fdb{m}^{\varepsilon}:= \varepsilon   \sum_{j=0 }^{m}\norm{\dt^j   v}_{1,m-j }^2 ,
\\\label{ddef}
 &\fd{m}^{\varepsilon}:=  \sum_{j=0 }^{m-1} \norm{(\dt^j  p,\dt^j  b)}_{m-j }^2   +\sum_{j=0 }^{m-2}  \norm{\dt^j  v}_{m-j }^2  +\varepsilon^2 \sum_{j=0 }^{m-1}\norm{\dt^j  v}_{m-j +1}^2.
\end{align}

\begin{thm}\label{the_main2}
Let  $m\ge 4$ be an integer. Let $ (\eta_0^\delta, p_0^\delta , v_0^\delta,b_0^\delta,\rho_0^\delta)$ be the smoothed data constructed in Appendix \ref{datasec} and $\Phi^\epsd$ be the corrector   defined according to \eqref{MHDvk211002233223} and \eqref{MHDvk2110022332232}.  Then there exist a $T_0>0$ and a $\delta_0>0$ such that for each $0<\delta<\delta_0$ there exists an $\eps_0=\eps_0(\delta)>0$ so that for $0<\eps\le \eps_0$  the unique solution  $(\eta^\epsd, p^\epsd,v^\epsd, b^\epsd)$ to  \eqref{MHDve}, constructed in Theorem \ref{lwp}, exists on $[0,T_0]$ and  satisfies
\begin{equation}\label{enesti2}
 \fg{m}^\eps(\eta^\epsd, p^\epsd,v^\epsd, b^\epsd)(T_0) \leq \Mm .
\end{equation}
\end{thm}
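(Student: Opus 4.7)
The overall plan is to derive a closed uniform-in-$(\epsilon,\delta)$ a priori bound of the form
\begin{equation*}
\fg{m}^\eps(t)\le C\Mm + t\cdot P\bigl(\fg{m}^\eps(t)\bigr)
\end{equation*}
on the maximal existence interval $[0,T^*_\epsilon)$ furnished by Theorem \ref{lwp}, and then to combine it with a standard continuity argument to force $T^*_\epsilon\ge T_0$ for some $T_0$ independent of $\epsilon$ and $\delta$. The $\epsilon$-dependent pieces $\epsilon\ns{\eta}_{1,m}$ and $\epsilon^2\ns{\eta}_{m+1}$ in $\fe{m}^\varepsilon$ are precisely what the viscous regularization and the $\epsilon$-vanishing correctors $\Psi^\epsd$ are designed to handle, while the $\delta$-smoothing controls all initial data up to a $\delta$-dependent loss; matching these will be what dictates $\eps_0=\eps_0(\delta)$.

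The bulk of the work is the energy identity for the tangential-conormal derivatives. I would apply $\partial_t^j Z^\alpha$ with $j+|\alpha|\le m$ to the continuity, momentum and induction equations in \eqref{MHDve} and pair them against weighted versions of the same derivatives of $(p^\epsd,v^\epsd,b^\epsd)$ in an Alinhac-good-unknown form so that the commutators with $\naba$ and $\diva$ do not lose derivatives in $\eta$; the symmetric MHD structure in Lagrangian variables and the Piola identity make the top-order terms cancel up to lower-order commutators. Because $Z_3=x_3(x_3^2-1)\p_3$ vanishes on $\Sigma\cup\Sigma_\pm$, the conormal derivatives generate no boundary integrals there, which delivers the full $\fe{m}^\varepsilon$-part of the energy together with the viscous dissipation $\fdb{m}^\varepsilon$. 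The genuinely tangential derivatives along $\Sigma$ however do produce boundary integrals, and these are the terms that stopped \cite{MTT1,MTT} in 3D. Here I would invoke Cauchy's integral \eqref{re00} and rewrite the boundary piece of the induction equation in the schematic form
\begin{equation*}
(b_0\cdot\n_0)\,\p_3\eta = J_0^{-1}p_0^{1/\gamma}\dt\!\left(p^{-1/\gamma}b\right)\cdot\text{(geom.)}+\text{tangential terms},
\end{equation*}
so that the transversality $|b_0\cdot\n_0|\ge c_0/2$ on $\Sigma$ furnishes control of $\abs{\eta}_m^2$ directly from the evolution of $b$ rather than from any Rayleigh--Taylor sign condition. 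This is the key mechanism replacing the hypothesis of \cite{MTT1,MTT}.

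Finally I would recover the normal-derivative pieces of $\fd{m}^\varepsilon$ from the conormal norms and $\abs{\eta}_m^2$: the continuity equation yields $\p_3 v\cdot\n$ from $\dt p$ and tangential velocities; the induction equation together with transversality yields the normal derivatives of $b$, which is where the somewhat unusual term $\norm{\dt^j\p_3 b\cdot\n}_{0,m-j-1}^2$ in $\fe{m}^\varepsilon$ enters; the momentum equation then controls $\nabla p$ modulo the viscosity, which in turn gives the $\eps^2$-weighted higher derivatives of $v$ via the elliptic regularity of $\eps\dae$; and the jump conditions in \eqref{MHDve} plus Proposition \ref{prop2} propagate $\jump{\eta}=\jump{\p_3\eta}=0$ to all time. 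Closing these estimates gives the differential inequality above, and a continuity-bootstrap argument then yields \eqref{enesti2} on a common $[0,T_0]$. The main obstacle I expect is the precise tracking of regularities in the three-parameter game: every term carrying an explicit $\eps$ but a loss of derivative in $\eta$ must be absorbable by $\fdb{m}^\varepsilon$ or by $\eps^2\norm{v}_{m-j+1}^2$, while the $\delta$-dependence introduced by the initial smoothing (Appendix \ref{datasec}) must not re-enter through the correctors $\Psi^\epsd$ to the leading order; the careful construction of the anisotropic space \eqref{fgdef}--\eqref{ddef}, which puts anisotropic regularity on the highest-order pieces and full regularity only at lower order, is what makes this balance possible.
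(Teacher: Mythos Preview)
Your overall architecture is right, but two of the core mechanisms you describe would not close the estimates, and they are exactly the points where the paper's argument is delicate.

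First, the handling of the tangential boundary integral on $\Sigma$. You propose to write $(b_0\cdot\n_0)\,\p_3\eta$ via the evolution of $b$ and use transversality ``directly''. That identity (essentially \eqref{hha2}) recovers interior normal derivatives, not the trace $\abs{\eta}_m$. In the paper the troublesome boundary term comes from $\int_\Sigma b\cdot\n\,\jump{\mathcal{B}^m}\cdot\mathcal{V}^m$ after integration by parts, and if one defines all good unknowns symmetrically with $\eta$ this term has no sign or time-derivative structure and is out of control. The paper's first key move is to define the magnetic good unknown \emph{asymmetrically}, $\mathcal{B}^m=Z^m b - Z^m\eta_0^\delta\cdot\nabla_{\a_0^\delta}b$, exploiting \eqref{s0202}; then the boundary integrand involves $Z^m\eta_0^\delta$ rather than $Z^m\eta$ and becomes a total time derivative plus lower order, cf.\ \eqref{troublesome}. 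The second key move is to extract $\abs{Z^m\eta}_0$ not from any boundary identity but from the \emph{volume} control of $\ns{Z^m b}_0$: via Cauchy's integral one controls $\norm{(\a_0^\delta)^Tb_0^\delta\cdot\nabla\Xi^m}_0$ for $\Xi^m=Z^m\eta-Z^m\eta_0^\delta\cdot\nabla_{\a_0^\delta}\eta$, and then the anisotropic trace Lemma~\ref{tra_es} converts this to $\abs{\Xi^m}_0$. Your schematic does not contain either of these two ingredients, and without the first one the tangential energy identity does not even close modulo $\abs{\eta}_m$.

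Second, the normal-derivative recovery. You invoke ``elliptic regularity of $\eps\dae$'' to pass from $\nabla p$ to the $\eps^2$-weighted norms of $v$; any such step loses a factor $\eps^{-1}$ and cannot be uniform. The paper instead projects the momentum equation onto $\tau^\beta$ and $n$ to derive the pair \eqref{teqq}--\eqref{teqq2}, which it reads as ODEs in time for $\p_3 b\cdot\tau^\beta$ and $\p_3 p$ using $b\cdot\naba v=\dt b-\tfrac{b}{\gamma p}\dt p$ and $\diva v=-\tfrac{1}{\gamma p}\dt p$. Squaring and integrating, an \emph{exact cancellation} occurs between the two cross-terms underlined in \eqref{ee21} and \eqref{ee22}; without this cancellation the viscous normal estimate does not close. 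This mechanism, not elliptic regularity, is what produces the $\eps^2\ns{\dt^j v}_{m-j+1}$ part of $\fd{m}^\eps$ uniformly. Finally, note that the closed inequality in the paper is $\fg{m}^\eps\lesssim \Mm+\eps\Mmd+t^{1/2}P(\fg{m}^\eps)+\eps\fg{m}^\eps$; the term $\eps\Mmd$ (coming from $\Psi^\epsd$ and initial smoothing) is precisely what forces $\eps_0=\eps_0(\delta)$, and the linear $\eps\fg{m}^\eps$ is absorbed only after $\eps$ is taken small.
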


By Theorem \ref{the_main2},  one can easily pass to the limit as first $\varepsilon\rightarrow 0$ and then $\delta\rightarrow 0$ in \eqref{MHDve} to find that  the limit $(\eta,p,v, b)$ of   $(\eta^\epsd, p^\epsd,v^\epsd, b^\epsd)$ solves  \eqref{MHDv0} on  $[0, T_0]$, where the constraint that $\diva b=0$ in $\Omega$ is recovered by using $(i)$ in Proposition \ref{prop1} as $\diverge_{\a_0}b_0=0$ in $\Omega$. Moreover, the solution satisfies the estimate $ \fg{m}^0(T_0)\le \Mm $. By the a posterior estimates, one can improve those $L^2$-in-time estimates in $ \fg{m}^0(T_0)$ to be  $L^\infty$-in-time so that the estimate \eqref{enesti} holds. After proving the uniqueness of the solutions, Theorem \ref{the_main} is thus concluded.

\begin{rem}
Our analysis can be applied to justify the inviscid limit of the compressible viscous non-resistive  MHD in  bounded domains with the no-slip boundary condition in Sobolev spaces when the magnetic field is nowhere tangent to   the boundary.
It should be pointed out that unlike the Navier--Stokes equations, there are no boundary layers in the inviscid limit here. This is due to that here the  viscous and ideal problems have the exactly same boundary conditions.
\end{rem}

 \subsection{Strategy of the uniform estimates}\label{sec2.3}

 The main part of the paper will be devoted to prove Theorem  \ref{the_main2}, where the  key step  is to derive the  uniform estimate  \eqref{enesti2} on a  time interval small but independent of $\epsd$. Suppress the dependence of solutions on $\epsd$. Note that the estimates of the lower order derivatives of $(p,v,b)$ and  the last three terms of $\eta$ in \eqref{edef} can be easily controlled by using the transported estimates as recorded in Section \ref{lasts6}. It then suffices to estimate the highest order derivatives of $(p,v,b)$ and the boundary regularity of $\eta$.

To derive the highest order tangential energy estimates, we shall use the  equations \eqref{MHDv0s} (derived from \eqref{MHDve}) for $(q,v,b)$ with  $q=p+\hal |b|^2$   the total pressure. One starts with applying the tangential {\it spatial}  derivatives $Z^m$, any $Z^\al$ for $\al\in \mathbb{N}^3$ with $|\al|=m$,  to  \eqref{MHDv0s}. Typically,   the estimate of the commutator between $Z^m$ and $\pa_i^\a$ needs a control of $\norm{Z^m \nabla\eta}_0$, which yields a loss of one derivative. Motivated by Alinhac \cite{A}, a natural  way to get around this difficulty is  to introduce the good unknowns,
\beq\label{gggd}
\mathcal{V}^m= Z^m v -
Z^m \eta\cdot\naba v,\ \mathcal{Q}^m= Z^m q -
Z^m \eta\cdot\naba q\text{ and }\mathcal{B}^m= Z^m b -
Z^m \eta\cdot\naba b.
\eeq
Then  the highest order term of $\eta$ will be canceled when considering the equations satisfied by good unknowns. This  leads to that,   by using $\dt\eta=v $ and $(ii)$ in Proposition \ref{prop1},
\begin{align} \label{s02}
& \dfrac{1}{2}\dfrac{d}{dt}\int_{\Omega} J\( \frac{1}{\gamma p} \abs{\Q -b\cdot \B}^2 +\rho \abs{\mathcal{V}^m }^2 + \abs{\mathcal{B}^m }^2 \)  +\eps \int_{\Omega}J \abs{ \naba \mathcal{V}^m}^2\nonumber
 \\&= \int_{\Sigma }\jump{ \mathcal{Q}^m}  \mathcal{V}^m\cdot\n- \int_\Sigma  b\cdot\n \jump{\mathcal{B}^m} \cdot \mathcal{V}^m+{\sum}_{\mathcal{R}}\nonumber
\\& =  -\int_{\Sigma } J^{-1} Z^m   \eta\cdot\n \jump{\pa_3 q}  \dt Z^m \eta \cdot\n+\int_{\Sigma }  b_0^\delta\cdot\n_0^\delta J^{-1} Z^m   \eta\cdot\n \jump{\pa_3 b} \cdot  \dt Z^m \eta+{\sum}_{\mathcal{R}}.
\end{align}
Here $\n_0^\delta=\n(\eta_0^\delta)$ and ${\sum}_{\mathcal{R}}$ denotes terms, whose time integration, after some delicate
arguments, can be bounded by $\Mm+\eps\Mmd  +  t^{1/2}P(\fg{m}^\eps(t) )$ for $0<\delta<\delta_0$ with some $\delta_0>0$, where
\beq\label{mdef12}
\Mmd :=   P_\delta\(\ns{(\eta_0,p_0,v_0,b_0,\rho_0)}_{m} \).
\eeq
It is well known that the geometric symmetry structure for the first term in the right hand side of \eqref{s02} is crucial:
\begin{align} \label{s022}
  -\int_{\Sigma } J^{-1} Z^m   \eta\cdot\n \jump{\pa_3 q}  \dt Z^m \eta \cdot\n =  -\hal\dtt\int_{\Sigma } \jump{\pa_3 q} J^{-1} \abs{Z^m   \eta\cdot\n  }^2 +{\sum}_{\mathcal{R}},
\end{align}
and this would yield the boundary regularity $\abs{Z^m \eta \cdot\n}_0^2$ if one assumed the Rayleigh--Taylor sign condition ($i.e.$, $\jump{\pa_3 q}>0$ on $\Sigma$).
However, there is no such symmetry for the second term in the right hand side of \eqref{s02}. Note that this term vanishes when $ b_0^\delta\cdot\n_0^\delta=0$ on $\Sigma$, and it seems out of control when $ b_0^\delta\cdot\n_0^\delta\neq0$ on $\Sigma$. Our way to overcome this difficulty is to make use of Cauchy's integral  \eqref{re00}, which implies, by \eqref{jequ1},
\beq\label{s0202}
b\cdot\naba  = J^{-1}J_0^\delta (\a_0^\delta)^Tb_0^\delta   \cdot\nabla= \rho(\rho_0^\delta)^{-1} b_0^\delta \cdot \nabla_{\a_0^\delta} .
\eeq
We shall use \eqref{s0202} for the term $b\cdot\nabla_\a b$ in the second equation of \eqref{MHDv0s}, which
  allows one to  introduce instead in \eqref{gggd},
\beq\label{good2}
\mathcal{B}^m= Z^m b - Z^m \eta_0^\delta\cdot\nabla_{\a_0^\delta} b.
\eeq
Due to \eqref{good2}, the second term in the right hand side of \eqref{s02} is changed  to be
\begin{align}   \label{s03}
&\int_{\Sigma } b_0^\delta\cdot\n_0^\delta (J_0^\delta)^{-1} Z^m   \eta_0^\delta \cdot\n_0^\delta \jump{\pa_3 b} \cdot \dt Z^m \eta\nonumber
\\&\quad=\dtt\int_{\Sigma } b_0^\delta\cdot\n_0^\delta (J_0^\delta)^{-1} Z^m   \eta_0^\delta \cdot\n_0^\delta \jump{\pa_3 b} \cdot  Z^m  \eta+{\sum}_{\mathcal{R}}.
\end{align}
By \eqref{s022} and \eqref{s03}, one can then deduce from \eqref{s02}, correspondingly, that
\begin{align}    \label{s04}
  \norm{  ( p,   v,   b)(t) }_{0,m}^2  +\eps\int_0^t   \norm{   v }_{1,m }^2
   \ls \as{Z^m   \eta(t)}_0+  \Mm+\eps\Mmd    +  t^{1/2}P(\fg{m}^\eps(t) )  .
\end{align}
Now to control $\as{Z^m\eta }_0$ in the right hand side of \eqref{s04}, our key point here is to use further Cauchy's integral  \eqref{re00} in $\norm{  b }_{0,m}^2$ and then introduce the good unknown
\beq \label{s044}
\Xi^m:=Z^m \eta-  Z^m \eta_0^\delta \cdot\nabla_{\a_0^\delta} \eta.
\eeq
These allow one to  add  $\norm{(\a_0^\delta)^Tb_0^\delta \cdot \nabla \Xi^m}_0^2$ to the left hand side of \eqref{s04}. Then the boundary regularizing effect of the magnetic field due to that $((\a_0^\delta)^Tb_0^\delta)_3=(J_0^\delta)^{-1}b_0^\delta\cdot\n_0^\delta \neq0$ near $\Sigma$ is   captured by applying Lemma \ref{tra_es} to $\Xi^m $:
\beq\label{s05}
  \abs{\Xi^m }_0^2 \ls \norm{(\a_0^\delta)^Tb_0^\delta \cdot \nabla \Xi^m }_0\norm{\Xi^m }_0+  \ns{\Xi^m }_0 .
\eeq
By \eqref{s05} and Cauchy's inequality, one can then improve \eqref{s04} to be
\begin{align}   \label{s055}
  \norm{  ( p,   v,   b)(t) }_{0,m}^2+ \as{    \eta(t)}_m  +\eps\int_0^t   \norm{   v }_{1,m }^2
   \le    \Mm+\eps\Mmd   +  t^{1/2}P(\fg{m}^\eps(t) ) .
\end{align}
Similarly but in a much simpler way,  the rest of highest order tangential energy estimates involving at least one time derivative can be also controlled by the same bound as \eqref{s055}.

Now we turn to the derivation of  the normal derivatives estimates near the boundary $\Sigma\cup\Sigma_\pm$. For the original ideal MHD \eqref{MHDv0}, utilizing again the transversality of the magnetic field near the boundary as in Yanagisawa \cite{Y} and Yanagisawa and Matsumura \cite{YM}, the estimates of normal derivatives of the solution can be derived by expressing them in terms of tangential derivatives. However, for the viscous approximation \eqref{MHDve}, one needs to explore additionally the ODE-in-time structures and certain cancelation related to the viscous term. More precisely, first, by \eqref{dividen} and the fourth and second equations in \eqref{MHDve}, one can express $\p_3 b\cdot \n$ and $\p_3 v$ as sums of tangential derivatives of $p,v,b$, denoted by ${\sum}_{\mathcal{T}}$, up to a multiplication of  continuous functions of $\nabla\eta, p,v,b$. Next, one uses the tangential part of the third equation in \eqref{MHDve} to deduce that
\begin{align}\label{ode1}
& \p_3   b \cdot \tau^\beta  +\eps \frac{ \a_{k3} \a_{k3}}  {(\a^Tb)_3^2}  \p_3 \( b\cdot\naba v\)\cdot\tau^\beta={\sum}_{\mathcal{T}}+\eps \nabla \p_\beta  v+{\sum}_{\eps} ,\ \beta=1,2.
\end{align}
where $ \tau^\beta,\beta=1,2,$ are defined in \eqref{base} and ${\sum}_{\eps}$ denotes the terms that can be ultimately controlled by $ \eps\fg{m}^\eps$.
By the fourth and second equations in \eqref{MHDve}, $b\cdot\naba v=\dt b\underline{- \frac{b}{\gamma p}  \dt p}$ and hence \eqref{ode1} can be regarded as an ODE (in time) for  $\p_3b\cdot\tau^\beta$. On the other hand, the normal part of the third equation in \eqref{MHDve} yields that
 \begin{align}\label{ode2}
 \p_3 p\underline{+ \p_3  b \cdot \tau^\beta  b \cdot \tau^\beta } -\varepsilon\p_3 \diva v  ={\sum}_{\mathcal{T}}+\eps \nabla \p_\beta  v+{\sum}_{\eps} .
\end{align}
By the   second equations in \eqref{MHDve}, $\diva v=-\frac{1}{\gamma p} \dt p$ and hence \eqref{ode2} is an ODE  for $\p_3p$.
Therefore, basing on these structures above, by a recursive  argument in terms of the numbers of normal derivatives,
one can then deduce the desired estimates so that
\beq\label{s078}
\fg{m}^\eps(t)\ls \Mm+\eps \Mmd   +  t^{1/2}P(\fg{m}^\eps(t) )+ \eps\fg{m}^\eps(t).
\eeq
It should be noted that it is crucial in deriving \eqref{s078} that \eqref{ode1} and \eqref{ode2} have an exact cancelation  between the two underlined crossing terms; otherwise, they seem out of control.
One thus concludes the estimate \eqref{enesti2} from \eqref{s078} for $0<\eps\le \eps_0(\delta)$ and some $ T_0>0.$

 \subsection{Notation}
The Einstein convention of summing over repeated certain indices will be used, and the repeated over the latin letter $i,j,k,\ell$ is from $1$ to $3$ while  the repeated over the greek letter $ \beta,\gamma$ is from $1$ to $2$.
Throughout the paper $C$ denotes for generic  positive constants and $P$ for generic   polynomials that do
not depend on  $\epsd$, which are allowed to change from line to line. $C_\delta,P_\delta$, etc. denote  the additional dependence. $A_1 \lesssim A_2$ means that $A_1 \le C A_2$.

 $\mathbb{N} = \{ 0,1,2,\dotsc\}$ denotes for the collection of non-negative integers. When using space-time differential multi-indices, we write $\mathbb{N}^{1+d} = \{ \alpha = (\alpha_0,\alpha_1,\dotsc,\alpha_d) \}$ to emphasize that the $0-$index term is related to temporal derivatives. For just spatial multi-indices, we write $\mathbb{N}^d$.  For $\alpha \in \mathbb{N}^{1+2}$,  $\pa^\alpha = \dt^{\alpha_0}  \pa_1^{\alpha_1} \pa_2^{\alpha_2},$  for $\alpha \in \mathbb{N}^{3}$,  $Z^\alpha =  \pa_1^{\alpha_1} \pa_2^{\alpha_2}Z_3^{\al_3}$ and  for $\alpha \in \mathbb{N}^{1+3}$,  $\bar Z^\alpha = \dt^{\alpha_0} \pa_1^{\alpha_1} \pa_2^{\alpha_2}Z_3^{\al_3}.$ Denote the standard commutator
\beq  \label{cconm1}
\[\pa^\al, f\]g=\pa^\al (fg)-f\pa^\al g
\eeq
and the symmetric commutator
\beq  \label{cconm2}
\[\pa^\al, f,g \]=\pa^\al (fg)-f\pa^\al g-\pa^\al f g.
\eeq

We omit the differential elements $dx$ and $dx_1dx_2$ of the integrals over $\Omega_\pm$ and $\Sigma$   and also sometimes the  differential elements $ds$ of the time integrals.

\section{Nonlinear viscous approximation}\label{appsec}

\subsection{Initial data and compatibility conditions of \eqref{MHDv0}}\label{sec71}

For the   data  $(\eta_0,p_0,v_0,b_0,\rho_0)$ of \eqref{MHDv0} given in Theorem \ref{the_main}, one needs to construct the initial data $(\partial_t^j p(0),\partial_t^j v(0),\partial_t^j  b(0))$ for $j=1,\dotsc,m$ and $\dt^j\eta(0)$ for $j=1,\dots,m-1$  recursively by using
\begin{equation}\label{in0def}
\begin{pmatrix}
\partial_t^j p(0)\\\partial_t^j v(0)\\\partial_t^j  b(0)
\end{pmatrix}:= \partial_t^{j-1}
\begin{pmatrix}
-\gamma p  \diva v   \\
 \rho^{-1}\(b\cdot\naba b-\naba  (p+\hal|b|^2)\)  \\
b\cdot\naba v-b\diva v
\end{pmatrix}{\Big|}_{t=0},\ j=1,\dots,m
\end{equation}
and
\begin{equation}\label{in0def2}
 \dt^j\eta(0):=\dt^{j-1}v(0),\ j=1,\dots,m-1,
\end{equation}
where, as before, $\rho={\rho_0 } p_0^{-\frac{1}{\gamma}}p^{\frac{1}{\gamma}}$ and $\a=\a(\eta)$. By the iteration, \eqref{in0def} and \eqref{in0def2} enable one to determine these initial data in terms of $(\eta_0,p_0,v_0,b_0,\rho_0)$ and its spatial derivatives in such a way which is essentially same as that one determines the time derivatives of the solution $(\eta,p,v,b)$ to \eqref{MHDv0} in terms of $(\eta,p,v,b,\rho)$ by using \eqref{MHDv0} repeatedly. Moveover, due to the first initial condition in \eqref{iidd1},
it is straightforward to check that
\beq
\se{m}(0)\le \Mm .
\eeq

  In order for  $(\eta_0,p_0,v_0,b_0,\rho_0)$ to be taken as the   data for the local well-posedness of  \eqref{MHDv0} in our energy functional framework,  these data  need to satisfy, besides \eqref{iidd100} and \eqref{iidd1} (and $\diverge_{\a_0}b_0=0$ in $\Omega$), the following  $(m-1)$-th order  compatibility conditions:
\beq\label{compatibility}
 \jump{\dt^j p(0)}  =0\text{ and}   \jump{\dt^j v(0)} = \Pi_0\jump{\dt^j b(0)} =0\text{ on }\Sigma \text{ and } \dt^j v(0)  =0\text{ on }\Sigma_\pm,\  j=0,\dots,m-1,
\eeq	
where $\Pi_0=I-|\n_0|^{-2}\n_0\otimes\n_0$.
\begin{lem}\label{lemcom}
Under the initial conditions \eqref{iidd100}, \eqref{iidd1} and \eqref{compatibility}, it holds that
\beq \label{compatibility11}
 \jump{\dt^j b(0)}\cdot\n_0 =0\text{ on }\Sigma ,\  j=0,\dots,m-1
\eeq	
and
\beq\label{compatibility22}
 \jump{\p_3\dt^j v(0)} =0\text{ on }\Sigma ,\  j=0,\dots,m-2.
\eeq
\end{lem}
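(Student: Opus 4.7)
The plan is to prove \eqref{compatibility11} first by induction on $j$, invoking Proposition~\ref{prop1}$(ii)$, and then to deduce \eqref{compatibility22} by differentiating in time the identity \eqref{hha2} used in the proof of Proposition~\ref{prop2}. Both steps proceed without any new analytic input beyond a careful accounting of which jumps vanish.

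For \eqref{compatibility11}, I would use that Proposition~\ref{prop1}$(ii)$ gives $\dt(b\cdot\n)=0$, so $\dt^j(b\cdot\n)(0)\equiv 0$ in $\Omega_\pm$ for every $j\ge 1$, understood as a formal algebraic identity among the initial time derivatives defined by \eqref{in0def}. Expanding by Leibniz and taking the jump on $\Sigma$, where $\n_0$ is continuous thanks to \eqref{iidd100}, one obtains
\[
\jump{\dt^j b(0)}\cdot\n_0
=-\sum_{k=0}^{j-1}\binom{j}{k}\jump{\dt^k b(0)\cdot\dt^{j-k}\n(0)}.
\]
The base case $j=0$ is immediate from \eqref{iidd100}. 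For the inductive step, the key preliminary observation is that from $\n=\p_1\eta\times\p_2\eta$ and $\dt\eta=v$, the quantity $\dt^\ell\n(0)$ is a polynomial in the \emph{tangential} spatial derivatives of $\eta_0$ and of $\dt^a v(0)$ for $a\le\ell-1$; since $\jump{\eta_0}=0$ from \eqref{iidd100} and $\jump{\dt^a v(0)}=0$ for $a\le m-1$ from \eqref{compatibility}, tangential derivatives commute with the jump bracket and yield $\jump{\dt^\ell\n(0)}=0$ for all $\ell\le m$. Together with the inductive hypothesis---upgraded via $\Pi_0\jump{\dt^k b(0)}=0$ from \eqref{compatibility} to the full $\jump{\dt^k b(0)}=0$ for $k<j$---each summand vanishes, closing the induction.

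For \eqref{compatibility22}, I would revisit the identity \eqref{hha2} at order $j+1$ in time. Rearranging Cauchy's integral \eqref{re00} using $\n_0=J_0\a_0 e_3$ yields the time-dependent identity $b_0\cdot\n_0\,\p_3\eta=Jb-(J_0\a_0^Tb_0)_\beta\p_\beta\eta$ on $\Omega$. Differentiating $(j+1)$ times in $t$, using $\dt^{j+1}\eta=\dt^j v$ and the time-independence of the coefficients, then evaluating at $t=0$:
\[
b_0\cdot\n_0\,\p_3\dt^j v(0)=\dt^{j+1}(Jb)(0)-(J_0\a_0^Tb_0)_\beta\,\p_\beta\dt^j v(0).
\]
Taking jumps, the tangential derivative term vanishes since $\jump{\dt^j v(0)}=0$ from \eqref{compatibility}. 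Using $Jb=J_0p_0^{1/\gamma}p^{-1/\gamma}b$ from \eqref{jequ1}, a Leibniz expansion of $\dt^{j+1}(p^{-1/\gamma}b)(0)$ produces factors $\dt^k(p^{-1/\gamma})(0)$ for $k\le j+1\le m-1$---whose jumps vanish by the chain rule and $\jump{\dt^\ell p(0)}=0$---and factors $\dt^{j+1-k}b(0)$, whose jumps vanish by \eqref{compatibility11} (just proved). The right-hand side therefore has zero jump, and dividing by $b_0\cdot\n_0\ne 0$ on $\Sigma$ from \eqref{iidd1} yields \eqref{compatibility22}.

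The main subtlety is to arrange the induction so that the proof of \eqref{compatibility11} does not secretly depend on \eqref{compatibility22}: this succeeds because $\dt^\ell\n(0)$ only contains tangential spatial derivatives of the initial data and of $\dt^a v(0)$, and never $\p_3\dt^a v(0)$. The remainder is routine index bookkeeping to match the stated ranges $j\le m-1$ in \eqref{compatibility11} and $j\le m-2$ in \eqref{compatibility22}.
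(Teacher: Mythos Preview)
Your proposal is correct and follows essentially the same approach as the paper: both prove \eqref{compatibility11} by induction using $\dt^{j}(b\cdot\n)(0)=0$ from Proposition~\ref{prop1}$(ii)$ together with the Leibniz expansion (the paper writes this as the commutator $\jump{[\dt^{\ell+1},\n]b(0)}=0$, relying on exactly the observations you spell out about $\dt^\ell\n(0)$ involving only tangential derivatives), and both prove \eqref{compatibility22} by taking $\dt^j$ of the identity \eqref{hha2} and invoking \eqref{compatibility} and the just-proved \eqref{compatibility11}. Your write-up is slightly more explicit than the paper's, but the argument is the same.
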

\begin{proof}
We prove \eqref{compatibility11} by the induction. First, \eqref{compatibility11} holds for $j=0$ due to the last condition on $\Sigma$ in \eqref{iidd100}. Now suppose that \eqref{compatibility11} holds for $j=0,\dots,\ell$ with $\ell\in [0,m-2].$ Note that by the definitions \eqref{in0def} and \eqref{in0def2}, similar to the remarks below \eqref{in0def2},  $\dt^{\ell+1}(b\cdot\n)(0)$ is determined in terms of $(\eta_0,p_0,v_0,b_0,\rho_0)$ in the same way as the one that $\dt^{\ell+1}(b \cdot \n) $ is expressed in terms of $(\eta,p,v,b,\rho)$ by using \eqref{MHDv0} repeatedly. According to $(ii)$ in Proposition \ref{prop1}, one thus has
\beq\label{goodd1}
\dt^{\ell+1}(b\cdot\n)(0)=0.
\eeq
 Then by \eqref{goodd1}, the last two jump conditions in \eqref{compatibility} and the induction assumption, one deduces that, recalling the commutator notation \eqref{cconm1},
\beq
 \jump{\dt^{\ell+1} b(0)}\cdot\n_0 =-\jump{\[\dt^{\ell+1},\n\] b(0)}=0\text{ on }\Sigma.
\eeq	
This concludes \eqref{compatibility11}.

We now prove \eqref{compatibility22}. By the definitions \eqref{in0def} and \eqref{in0def2} again and according to \eqref{hha2}, similarly as for \eqref{goodd1}, one has
\beq\label{hha2''}
 b_0\cdot \n_0 \p_3 \dt^j v (0)  =J_0p_0^{\frac{1}{\gamma}}\dt^{j+1}(p^{-\frac{1}{\gamma}}b)(0) -(J_0\a_0^Tb_0)_\beta \p_\beta \dt^j v(0) ,\ j\ge 0.
\eeq
Then by the jump conditions in \eqref{iidd100},   \eqref{compatibility} and \eqref{compatibility11}, one obtains that for $j=0,\dots,m-2,$
\beq
b_0\cdot\n_0\jump{\p_3\dt^j v(0)} =J_0p_0^{\frac{1}{\gamma}} \jump{\dt^{j+1}(p^{-\frac{1}{\gamma}}b)(0)}
-(J_0\a_0^Tb_0)_\beta  \jump{ \p_\beta \dt^j v(0) }=0\text{ on }\Sigma,
\eeq
which implies \eqref{compatibility22} since $b_0\cdot\n_0\neq 0$ on $\Sigma.$
\end{proof}

\subsection{Smoothed initial data and correctors for \eqref{MHDve} }\label{sec72}

We will construct solutions to \eqref{MHDv0} as the inviscid limit of the corresponding problem for the viscous non-resistive MHD.  However, such an approximation scheme is highly technical due to the issue of the boundary conditions and the high order compatibility conditions for the initial data. Our idea here is to regularize  \eqref{MHDv0} by the viscous approximate problem \eqref{MHDve}, where the smoothed data $(\eta_0^\delta, p_0^\delta,v_0^\delta, b_0^\delta,\rho_0^\delta) $ and the so-called corrector $\Psi^\epsd$ are introduced.

Let $(\eta_0,p_0,v_0,b_0,\rho_0)$ be the   data of   \eqref{MHDv0} given in Theorem \ref{the_main} and $(\eta_0^\delta, p_0^\delta,v_0^\delta, b_0^\delta,\rho_0^\delta) $ be the smoothed data constructed in Appendix \ref{datasec}, with the smoothing parameter  $\delta>0$. Let $(\partial_t^j p^\delta(0),\partial_t^j v^\delta(0),\partial_t^j  b^\delta(0))$ for $j=1,\dotsc,m$ and  $\dt^j\eta^\delta(0)$ for $j=1,\dots,m-1$ be constructed recursively by using \eqref{in0def} and \eqref{in0def2}, with the data $(\eta_0,p_0,v_0,b_0,\rho_0)$ replaced by $(\eta_0^\delta, p_0^\delta,v_0^\delta, b_0^\delta,\rho_0^\delta) $, $\rho$ replaced by $\rho^\delta={\rho_0^\delta }( p_0^\delta)^{-\frac{1}{\gamma}}(p^\delta)^{\frac{1}{\gamma}}$ and $\a$ replaced by $\a^\delta=\a(\eta^\delta)$. Then it follows from the construction in Appendix \ref{datasec} and Lemma \ref{lemcom} (for the smoothed data correspondingly) that
\begin{equation} \label{eta0inde}
\jump{\eta_0^\delta} =\jump{\p_3\eta_0^\delta} =0\text{ on } \Sigma \text{ and } \eta_{0,3}^\delta=\pm 1\text{ on }\Sigma_\pm
\end{equation}
and
\begin{equation} \label{eta0inde2}
\begin{split}
&\jump{\partial_t^j p^\delta(0)} =0\text{ and }\jump{\partial_t^j v^\delta(0)} =\jump{\partial_t^j b^\delta(0)} =0\text{ on } \Sigma\text{ and } \partial_t^j v^\delta(0)=0\text{ on }\Sigma_\pm,
\\&\ j=0,\dots,m-1 \text{ and }\jump{\p_3\partial_t^j v^\delta(0)} = 0\text{ on } \Sigma,\ j=0,\dots,m-2.
\end{split}
\end{equation}
Moreover,
 \beq
 \eta_0^\delta\rightarrow \eta_0 \text{ in }H^m(\Omega_\pm)\cap H^m(\Sigma)\text{ and } (   p_0^\delta, v_0^\delta,  b_0^\delta,  \rho_0^\delta)\rightarrow (p_0,v_0,b_0,\rho_0) \text{ in }H^m(\Omega_\pm) \text{ as }\delta\rightarrow0,
 \eeq
and for $0<\delta<\delta_0$ with some $\delta_0>0$ (hereafter),
\beq\label{fadl1}
\rho_0^\delta, p_0^\delta, |J_0^\delta|\ge \frac{c_0}{2}>0
\text{ in }\Omega \text{ and } |b_0^\delta\cdot \n_0^\delta|\ge \frac{c_0}{2}>0\text{ on }\Sigma\cup\Sigma_\pm
\eeq
and
\beq\label{gesa1}
\se{m}(\eta^\delta,p^\delta,v^\delta,b^\delta)(0)\le \Mm \text{ and }\se{l}(\eta^\delta,p^\delta,v^\delta,b^\delta)(0)\le \Mmd ,\ l\ge m+1.
\eeq

Next, define the corrector  $\Psi^\epsd$ such that
\beq\label{MHDvk211002233223}
\dt^j ((\rho^\delta)^{-1} \Psi^\epsd)(0)=-\dt^j((\rho^\delta)^{-1} \eps\Delta_{\a^\delta}v^\delta)(0),\  j=0,\dots,m-3\text{ and } m-1
\eeq
and
\beq\label{MHDvk2110022332232}
\dt^{m-2} ((\rho^\delta)^{-1} \Psi^\epsd)(0)=-\dt^{m-2}((\rho^\delta)^{-1} \eps\Delta_{\a^\delta}v^\delta)(0)+ (\dt^{m-1}v^\delta (0))^\eps-\dt^{m-1}v^\delta (0),
\eeq
where $(\dt^{m-1}v^\delta (0))^\eps:=\phi_\eps\ast\mathcal{E}_\Omega(\dt^{m-1}v^\delta (0))$ for $\phi_\eps$ the standard mollifier in $\mathbb{R}^3$ and $\mathcal{E}_\Omega$ the Sobolev extension operator. The existence of such $\Psi^\epsd$ is standard (see \cite{LM}). Note that the classical properties of mollifiers (see \cite{A}) imply
\beq\label{gesa2}
\norm{(\dt^{m-1}v^\delta (0))^\eps-\dt^{m-1}v^\delta (0)}_k\ls \eps^l  \norm{ \dt^{m-1}v^\delta (0)}_{k+l},\ \forall k,l\ge 0.
\eeq
By the definition of $\Psi^\epsd$, one deduces from \eqref{gesa1}, \eqref{fadl1} and \eqref{gesa2} that
\begin{align}\label{psies}
\sup_{[0,\infty)}\sum_{j=0}^m\ns{\dt^j\Psi^\epsd}_{m-j}&\ls \eps^2 \Mmd+\ns{(\dt^{m-1}v^\delta (0))^\eps-\dt^{m-1}v^\delta (0)}_m
\nonumber\\&\ls \eps^2 \Mmd+\eps^2  \norm{ \dt^{m-1}v^\delta (0)}_{m+1}^2\le  \eps^2 \Mmd .
\end{align}

Now take $(\eta_0^\delta, p_0^\delta,v_0^\delta, b_0^\delta,\rho_0^\delta) $ and $\Psi^\epsd$ in the above as the ones for the viscous approximate problem \eqref{MHDve}. One then constructs the corresponding data $(\partial_t^j p^\epsd(0),\partial_t^j v^\epsd(0),\partial_t^j  b^\epsd(0))$ for $j=1,\dotsc,m$ and $ \partial_t^j  \eta^\epsd(0)$ for $j=1,\dotsc,m-1$ recursively by
\begin{equation}\label{in0defepsd}
\begin{pmatrix}
\partial_t^j p^\epsd(0)\medskip\\\partial_t^j v^\epsd(0)\medskip\\\partial_t^j  b^\epsd(0)
\end{pmatrix}:= \partial_t^{j-1}
\begin{pmatrix}
-\gamma p^\epsd  \divaed v^\epsd\smallskip   \\
 (\rho^\epsd)^{-1}\left(b^\epsd\cdot\nabaed b^\epsd-\nabaed  (p^\epsd+\hal|b^\epsd|^2)\right.
 \\ \hspace{-35pt}\left.+\varepsilon\daed v^\epsd  + \Psi^\epsd \)
 \smallskip \\
b^\epsd\cdot\nabaed v^\epsd-b^\epsd\divaed v^\epsd
\end{pmatrix}{\Big|}_{t=0},\ j=1,\dots,m
\end{equation}
and
\begin{equation}\label{in0defepsd2}
 \dt^j\eta^\epsd(0):=\dt^{j-1}v^\epsd(0),\ j=1,\dots,m-1,
\end{equation}
where $\rho^\epsd={\rho_0^\delta }( p_0^\delta)^{-\frac{1}{\gamma}}(p^\epsd)^{\frac{1}{\gamma}}$ and $\a^\epsd=\a(\eta^\epsd)$.
By comparing \eqref{in0def}--\eqref{in0def2} (with superscript $\delta$ added) and \eqref{in0defepsd}--\eqref{in0defepsd2}, due to \eqref{MHDvk211002233223} and \eqref{MHDvk2110022332232}, one has
\beq\label{tm10}
\begin{split}
&(\partial_t^j p^\epsd(0),\partial_t^j b^\epsd(0))=(\partial_t^j p^\delta(0),\partial_t^j b^\delta(0)),\ j=0,\dots,m-1,
\\
& \partial_t^j v^\epsd(0) = \partial_t^j v^\delta(0) ,\ j=0,\dots,m-2,\  \partial_t^{m-1} v^\epsd(0) = (\partial_t^{m-1} v^\delta(0))^\eps
 \end{split}
 \eeq
and
\beq\label{tm1}
\begin{split}
& \partial_t^m p^\epsd(0) = \partial_t^m p^\delta(0)-\gamma p_0^\delta \diverge_{\a_0^\delta}\((\dt^{m-1}v^\delta (0))^\eps-\dt^{m-1}v^\delta (0)\),
 \\&\partial_t^m v^\epsd(0) = \partial_t^m v^\delta(0)+\eps   (\rho_0^\delta)^{-1} \Delta_{\a_0^\delta}\((\dt^{m-1}v^\delta (0))^\eps-\dt^{m-1}v^\delta (0)\),
  \\&\partial_t^m b^\epsd(0) = \partial_t^m b^\delta(0)+ b_0^\delta\cdot  \nabla_{\a_0^\delta}\((\dt^{m-1}v^\delta (0))^\eps-\dt^{m-1}v^\delta (0)\)
  \\&\qquad\qquad\quad- b_0^\delta   \diverge_{\a_0^\delta}\((\dt^{m-1}v^\delta (0))^\eps-\dt^{m-1}v^\delta (0)\).
 \end{split}
\eeq
Then  by \eqref{tm10}, \eqref{tm1}, \eqref{gesa1} and \eqref{gesa2}, one obtains
\begin{align}\label{gesa122}
 \se{m}(\eta^\epsd,p^\epsd,v^\epsd,b^\epsd)(0)
 & \le \Mm +\Mm \ns{(\dt^{m-1}v^\delta (0))^\eps-\dt^{m-1}v^\delta (0)}_1
 \nonumber
 \\&\quad+\Mm\eps^2 \ns{(\dt^{m-1}v^\delta (0))^\eps-\dt^{m-1}v^\delta (0)}_2\nonumber
 \\& \le \Mm +\Mm \eps^2 \ns{\dt^{m-1}v^\delta (0)}_2   \le \Mm + \eps^2 \Mmd .
\end{align}
Moreover, \eqref{tm10} and \eqref{eta0inde2} imply in particular the following $(m-1)$-th order compatibility conditions for \eqref{MHDve}:
\begin{equation} \label{compatibility0je}
\begin{split}
&\jump{\partial_t^j p^\epsd(0)} =0\text{ and }\jump{\partial_t^j v^\epsd(0)} =\jump{\p_3\partial_t^j v^\epsd(0)}=\jump{\partial_t^j b^\epsd(0)} =0\text{ on } \Sigma
\\&\text{ and } \partial_t^j v^\epsd(0)=0\text{ on }\Sigma_\pm,\ j=0,\dots,m-1.
\end{split}
\end{equation}
It is worth noting that one more compatibility condition other than those in \eqref{eta0inde2}, that is, $\jump{\p_3\partial_t^{m-1} v^\epsd(0)}=0$ on $\Sigma$, is included in \eqref{compatibility0je}, which is necessary for the local well-posedness of the viscous approximation \eqref{MHDve} in the functional framework below in the next subsection; this is exactly the reason why the last two terms in \eqref{MHDvk2110022332232} have been added.

\subsection{Local well-posedness of \eqref{MHDve}}\label{sec73}


Now we turn to the local well-posedness of \eqref{MHDve}.
For an integer $m\ge 3$,   define the  energy functionals
\begin{equation}\label{p_energy_def}
 \mathbb{E}_{2m} : = \sum_{j=0}^{m}   \ns{(\dt^j p ,\dt^j v ,\dt^j b )}_{2m-2j} +\ns{\eta }_{2m+1}
\end{equation}
and
\begin{equation}\label{p_dissipation_def}
 \mathbb{D}_{2m}  :=  \sum_{j=0}^{m+1} \ns{\dt^j v }_{2m-2j+1},
\end{equation}
where $\norm{\cdot}_{-1}$ denotes the norm of $(H_0^1(\Omega))^\ast$.
The existence and uniqueness of solutions to \eqref{MHDve} can be stated as follows.

\begin{thm}\label{lwp}
Let $m\ge 3$ be an integer, $ (\eta_0^\delta, p_0^\delta , v_0^\delta,b_0^\delta,\rho_0^\delta)$ be the smoothed data constructed in Appendix \ref{datasec} and $\Phi^\epsd$ be the corrector  defined according to \eqref{MHDvk211002233223} and \eqref{MHDvk2110022332232}. For each $\epsd>0$, there exist  a $T_0^\epsd >0$ and a unique solution $ (\eta^\epsd,p^\epsd,v^\epsd,b^\epsd)$ to \eqref{MHDve}  on   $[0,T_0^\epsd]$ satisfying
\begin{equation}\label{zero_1}
 \sup_{[0,T_0^\epsd]} \mathbb{E}_{2m}(\eta^\epsd,p^\epsd,v^\epsd,b^\epsd) + \int_0^{T_0^\epsd} \mathbb{D}_{2m}(\eta^\epsd,p^\epsd,v^\epsd,b^\epsd)\le   P_\epsd(\mm).
\end{equation}
Moreover,  it holds that for $t\in[0,T_0^\epsd]$,
\beq\label{zero_123}
\rho^\epsd, p^\epsd, |J^\epsd|\ge \frac{c_0}{4}>0
\text{ in }\Omega\text{ and }|b^\epsd\cdot \n^\epsd|\ge \frac{c_0}{4}>0\text{ on }\Sigma\cup\Sigma_\pm,
\eeq
   \beq\label{dividen22}
   J^\epsd\divaed b^\epsd =       J_0^\delta\diverge_{\a_0^\delta} b_0^\delta\text{ in }\Omega,
   \eeq
   and
\beq
\jump{\eta^\epsd}=\jump{\p_3\eta^\epsd}  =0 \text{ on }\Sigma.
\eeq

\end{thm}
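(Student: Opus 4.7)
The plan is to establish Theorem \ref{lwp} in three stages: first solve a \emph{modified} viscous problem with standard parabolic-transmission interface conditions \eqref{mdfb}, then verify that the additional jump conditions in \eqref{MHDve} propagate from the compatibility conditions \eqref{compatibility0je}, and finally derive the pointwise bounds \eqref{zero_123} and the divergence identity \eqref{dividen22}.

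\textbf{Stage 1 (modified problem).} I would first construct a solution to the problem \eqref{MHDve0ee}, i.e.\ \eqref{MHDve} with the interface conditions replaced by the standard pair $\jump{v^\epsd}=0$ and $\jump{\nabaed v^\epsd}\n^\epsd=0$ on $\Sigma$. Since $\partial_t p^\epsd$ and $\partial_t b^\epsd$ are transport equations driven by $v^\epsd$, and $\eta^\epsd(t,\cdot)=\eta_0^\delta+\int_0^t v^\epsd$, the only genuinely parabolic equation is the momentum equation for $v^\epsd$. Representing $p^\epsd,b^\epsd$ through \eqref{jequ1}, \eqref{re00} and linearizing by freezing $\eta^\epsd$ (hence $\a^\epsd,\rho^\epsd,\n^\epsd$) and the convective terms, one obtains a linear parabolic transmission system for $v^\epsd$ with uniformly elliptic principal part (for $\epsd$ fixed). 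Solving this linear problem in the scale \eqref{p_energy_def}--\eqref{p_dissipation_def} is classical; it is exactly here that the top-order condition $\jump{\p_3\partial_t^{m-1}v^\epsd(0)}=0$ in \eqref{compatibility0je} is required to achieve the $(2m+1)$-order parabolic regularity. A Banach fixed-point argument on a small $\epsd$-dependent time interval then yields a unique solution of \eqref{MHDve0ee} satisfying \eqref{zero_1}.

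\textbf{Stage 2 (equivalence of the two jump condition sets).} The key step is to show the solution from Stage 1 automatically satisfies $\jump{p^\epsd}=\jump{b^\epsd}=0$ and $\jump{\p_3 v^\epsd}=0$ on $\Sigma$, so that it solves the original \eqref{MHDve}. From $\partial_t \eta^\epsd=v^\epsd$ and $\jump{\eta_0^\delta}=\jump{\p_3\eta_0^\delta}=0$ given in \eqref{eta0inde}, the identities $\jump{\eta^\epsd}=\jump{\p_3\eta^\epsd}=0$ propagate provided $\jump{\p_3 v^\epsd}=0$ holds. For the remaining interface conditions, I would argue precisely as in Proposition \ref{prop2}: the derivations \eqref{jequ1}, \eqref{re00} and the pointwise identity \eqref{hha2} remain valid for \eqref{MHDve} since they only use the first, second and fourth equations. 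Thus on $\Sigma$ one obtains a closed linear system of $\partial_t$-ODEs for $\jump{p^\epsd}$, $\jump{b^\epsd}$ and $b_0^\delta\cdot\n_0^\delta\,\jump{\p_3 v^\epsd}$, driven by $\jump{v^\epsd}=0$ and by the parabolic trace $\jump{\nabaed v^\epsd}\n^\epsd=0$ from \eqref{mdfb}. Since \eqref{compatibility0je} gives vanishing initial data for each of these jumps, Gronwall's inequality forces them to remain zero; the coefficient $b_0^\delta\cdot\n_0^\delta$ is bounded away from zero by \eqref{fadl1}, which allows one to solve for $\jump{\p_3 v^\epsd}$.

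\textbf{Stage 3 (pointwise bounds, divergence, and uniqueness).} The identity \eqref{dividen22} is the content of $(i)$ in Proposition \ref{prop1} applied to \eqref{MHDve}; its proof only uses the first and fourth equations and is unchanged by the viscous corrector. The lower bounds in \eqref{zero_123} follow from the fundamental theorem of calculus in $t$ combined with the Sobolev embedding $H^{2m}\hookrightarrow L^\infty$ (valid since $m\ge 3$): the energy bound \eqref{zero_1} controls $\|\partial_t(\rho^\epsd,p^\epsd,J^\epsd,b^\epsd\cdot\n^\epsd)\|_{L^\infty}$, so shrinking $T_0^\epsd$ if necessary ensures these quantities remain within a factor $\tfrac12$ of their initial values, which satisfy \eqref{fadl1}. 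Uniqueness is obtained by a standard energy estimate on the difference of two solutions.

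\textbf{Main obstacle.} The principal difficulty is Stage 2. With four interface conditions on a second-order parabolic momentum equation, \eqref{MHDve} is superficially overdetermined, and one must justify that the transparent parabolic pair \eqref{mdfb} together with the carefully engineered compatibility conditions \eqref{compatibility0je} and the correctors $\Psi^\epsd$ in \eqref{MHDvk211002233223}--\eqref{MHDvk2110022332232} is exactly sufficient. The argument depends critically on the transversality $|b_0^\delta\cdot\n_0^\delta|\ge c_0/2$, which renders \eqref{hha2} solvable for $\jump{\p_3 v^\epsd}$, and on Cauchy's integral \eqref{re00}, which reduces the magnetic dynamics to time-ODEs along $\partial_t$ rather than genuine PDE constraints at the interface.
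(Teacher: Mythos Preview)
Your three-stage plan matches the paper's structure, and Stages~1 and~3 are essentially correct. The gap is in Stage~2.

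You propose to close a coupled system of $\partial_t$-ODEs for $\jump{p^\epsd}$, $\jump{b^\epsd}$ and $b_0^\delta\cdot\n_0^\delta\jump{\p_3 v^\epsd}$ by arguing ``precisely as in Proposition~\ref{prop2}'' via \eqref{hha2}. But the jump of \eqref{hha2} is a tautology here: since $p^\epsd,b^\epsd$ are already determined by $\nabla\eta^\epsd$ through \eqref{jequ1} and \eqref{re00}, one computes directly that $\jump{(p^\epsd)^{-1/\gamma}b^\epsd} = (p_0^\delta)^{-1/\gamma}((\a_0^\delta)^T b_0^\delta)_3\,\jump{\p_3\eta^\epsd}$, and substituting this into the jump of \eqref{hha2} collapses to the trivial identity $\jump{\p_3 v^\epsd}=\partial_t\jump{\p_3\eta^\epsd}$, yielding no information. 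Proposition~\ref{prop2} itself cannot be invoked either, since it assumes $\jump{p}=\jump{b}=0$ as hypotheses, which is exactly what you are trying to establish. You also omit $\jump{\p_3\eta^\epsd}$ from your list of unknowns, yet $\a^\epsd$ (and hence every occurrence of $\jump{\divaed v^\epsd}$ or $\jump{\nabaed v^\epsd}$ in the would-be ODEs for $\jump{p^\epsd},\jump{b^\epsd}$) depends on it.

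The paper closes Stage~2 by a one-line algebraic observation that bypasses any coupled ODE. From $\jump{v^\epsd}=0$ one gets $\jump{\eta^\epsd}=0$, hence $\jump{\p_\beta\eta^\epsd}=0$ for $\beta=1,2$; since $(\p_\beta\eta^\epsd)_i(\a^\epsd)_{ik}=\delta_{\beta k}$ holds identically on each side (regardless of any jump in $\p_3\eta^\epsd$),
\begin{equation*}
\jump{\nabaed v^\epsd}\p_\beta\eta^\epsd = \jump{(\p_\beta\eta^\epsd)^T\a^\epsd(\nabla v^\epsd)^T} = \jump{\p_\beta v^\epsd} = 0.
\end{equation*}
Together with the modified condition $\jump{\nabaed v^\epsd}\n^\epsd=0$ and the fact that $\{\p_1\eta^\epsd,\p_2\eta^\epsd,\n^\epsd\}$ is a basis, this gives $\jump{\nabaed v^\epsd}=0$ on $\Sigma$ pointwise in time, with no ODE needed. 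Only then do the second and fourth equations of \eqref{MHDve} become genuinely decoupled linear homogeneous ODEs $\partial_t\jump{p^\epsd}=-\gamma\jump{p^\epsd}\divaed v^\epsd$ and $\partial_t\jump{b^\epsd}=\jump{b^\epsd}\cdot\nabaed v^\epsd-\jump{b^\epsd}\divaed v^\epsd$, forcing $\jump{p^\epsd}=\jump{b^\epsd}=0$; and only after that does the computation of Proposition~\ref{prop2} legitimately yield $\jump{\p_3 v^\epsd}=\jump{\p_3\eta^\epsd}=0$.
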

\begin{proof}
Consider first the following modified problem of  \eqref{MHDve}:
\begin{equation}\label{MHDve0ee}
\begin{cases}
\partial_t\eta^\epsd =v^\epsd &\text{in } \Omega\\
\frac{1}{\gamma p^\epsd}\dt p^\epsd+  \divaed v^\epsd = 0 &\text{in  }\Omega\\
\rho^\epsd\partial_tv^\epsd  +\nabaed  (p^\epsd+\hal|b^\epsd|^2)-\varepsilon\daed v^\epsd =b^\epsd\cdot\nabaed b^\epsd+ \Psi^\epsd  &\text{in } \Omega\\
\partial_tb^\epsd +b^\epsd\divaed v^\epsd=b^\epsd\cdot\nabaed v^\epsd &\text{in } \Omega\\
\jump{v^\epsd } =0,\quad\jump{   \nabaed v^\epsd  }\n^\epsd =0\quad  &\text{on }\Sigma\\
v^\epsd=0 \quad  &\text{on }\Sigma_\pm\\
 (\eta^\epsd,p^\epsd,v^\epsd, b^\epsd )\mid_{t=0} =(\eta_0^\delta, p_0^\delta,v_0^\delta, b_0^\delta ).
 \end{cases}
\end{equation}
Since \eqref{jequ1} and \eqref{re00} hold also for the solution to \eqref{MHDve0ee}, one can   eliminate $p^\epsd$ (and $\rho^\epsd$) and $b^\epsd$ from \eqref{MHDve0ee}  to reformulate it equivalently as
\begin{equation}\label{MHDv1e}
\begin{cases}
\partial_t\eta^\epsd =v^\epsd &\text{in } \Omega\\
\rho_0^\delta J_0^\delta (J^\epsd)^{-1}\partial_tv^\epsd  -\varepsilon\daed v^\epsd=((J^\epsd)^{-1}J_0^\delta(\a_0^\delta)^Tb_0^\delta \cdot \nabla)^2\eta^\epsd+ \Psi^\epsd
\\\qquad\qquad\qquad
-\nabaed \(p_0^\delta(  J_0^\delta)^\gamma  (J^\epsd)^{-\gamma} +\hal |(J^\epsd)^{-1}J_0^\delta(\a_0^\delta)^Tb_0^\delta \cdot \nabla \eta^\epsd|^2\)    &\text{in } \Omega\\
 \jump{v^\epsd } =0,\quad\jump{   \nabaed v^\epsd  }\n^\epsd =0&\text{on }\Sigma
\\ v^\epsd =0 &\text{on } \Sigma_\pm
\\
 (\eta^\epsd,v^\epsd)\mid_{t=0} =(\eta_0^\delta, v_0^\delta).
 \end{cases}
\end{equation}
For any fixed $\epsd>0$, similarly as   the compressible Navier--Stokes equations, the right hand side of the second equation in \eqref{MHDv1e} can be easily controlled by the viscosity term within a local time interval, and so the local solution $  (\eta^\epsd ,  v^\epsd   )$ to \eqref{MHDv1e}, in the functional of $\mathbb{E}_{2m}+\int_0^t\mathbb{D}_{2m}$, on   $[0,T_0^\epsd]$ for some $T_0^\epsd>0$   can be constructed by using the same scheme; the conditions \eqref{compatibility0je} imply in particular that the corresponding $(m-1)$-th order compatibility conditions for \eqref{MHDv1e}  hold. We shall omit the details and refer to, for instance, \cite{DS} for the references. Then defining $p^\epsd$ (and $\rho^\epsd$) and $b^\epsd$ by \eqref{jequ1} and \eqref{re00}, respectively, one sees that  $  (\eta^\epsd ,  p^\epsd,v^\epsd,b^\epsd   )$ solves \eqref{MHDve0ee} on   $[0,T_0^\epsd]$ and
satisfies the estimate \eqref{zero_1}. The estimate \eqref{zero_123} follows from  the fundamental theorem of calculus, \eqref{fadl1} and \eqref{zero_1}, by restricting  $T_0^\epsd$   smaller if necessary, while the identity \eqref{dividen22} follows by using $(i)$ in Proposition \ref{prop1}.

Now to conclude the theorem,  it remains  to prove the following jump   conditions:
\beq\label{bdd2}
\jump{p^\epsd }= 0\text{ and }\jump{b^\epsd }= \jump{\p_3 v^\epsd  }  =\jump{\eta^\epsd}=\jump{\p_3\eta^\epsd}=0 \text{ on }\Sigma.
\eeq
Recall that $\{\p_1\eta^\epsd,\, \p_2\eta^\epsd,\,\n^\epsd\}$ is a basis of $\mathbb{R}^3$. By the first jump  condition in   \eqref{MHDve0ee}, one has that $\jump{\eta^\epsd}=0$ on $\Sigma$ by the first equation in \eqref{MHDve0ee} since $\jump{\eta_0^\delta}=0$ on $\Sigma$ and that
\beq\label{jj20}
\jump{\nabaed v^\epsd}\p_\beta\eta^\epsd= \jump{(\p_\beta\eta^\epsd)^T \a^\epsd (\nabla v^\epsd)^T  }
= \jump{  \p_\beta v^\epsd}=0\text{ on }\Sigma,\ \beta=1,2.
\eeq
This together with the second jump  condition in   \eqref{MHDve0ee} implies
\beq\label{jj2}
 \jump{\nabaed v^\epsd}=0 \text{ on }\Sigma .
\eeq
Now,  by the fourth equation in \eqref{MHDve0ee} and \eqref{jj2}, one obtains
\beq\label{jj244}
\partial_t \jump{b^\epsd} +\jump{b^\epsd}\divaed  v^\epsd =\jump{b^\epsd}\cdot\nabaed v^\epsd\text{ on }\Sigma,
\eeq
which implies $\jump{b^\epsd}=0$ on $\Sigma$ since $\jump{b_0^\delta}=0$ on $\Sigma$. Similarly, one has $\jump{p^\epsd}=0$ on $\Sigma$ since $\jump{p_0^\delta}=0$ on $\Sigma$.
One thus concludes \eqref{bdd2} similarly as in the proof of Proposition \ref{prop2}.
\end{proof}

\section{Transported estimates} \label{lasts6}
Now we turn to derive the uniform-in-$(\epsd)$ estimates for the solution  $ (\eta^\epsd,p^\epsd,v^\epsd,b^\epsd)$ to \eqref{MHDve} on $[0,T_0^\epsd]$ constructed in Theorem \ref{lwp}. It should be noted that the first condition in \eqref{zero_123} will be always used without mentioning explicitly.  For notational simplification, we will  keep only the $\eps$-dependence of the functionals such as $\mathcal{G}_m^\eps$,  $\mathfrak{E}_{m}^{\varepsilon }$, $\mathfrak{D}_{m}^{\varepsilon }$, $\fdb{m}^{\varepsilon}$ and the $\delta$-dependence on the data $ (\eta_0^\delta, p_0^\delta , v_0^\delta,b_0^\delta,\rho_0^\delta)$, $\a_0^\delta,$ $J_0^\delta,$ $\n_0^\delta,$ but suppress the dependence of the solution on $\epsd$. We may assume that $T_0^\epsd\le 1$ and restrict $t\in [0,T_0^\epsd]$ in the following.

We begin with the transported estimates. Let $m\ge 4$.
Define
  \begin{align}\label{fdef}
\mathfrak{F}_{m}^{\varepsilon}:=\sum_{j=0 }^{m-1} \norm{(\dt^j  p,\dt^j  v,\dt^j  b)}_{m-j-1 }^2    +   \ns{\eta}_m+ \eps\ns{\eta}_{1,m}+ \eps^2\ns{\eta}_{m+1} .
\end{align}
\begin{prop}\label{tres1}
For $t\in [0,T_0^\epsd]$ with $T_0^\epsd\le 1$, it holds that
\begin{equation}\label{etaest}
\mathfrak{F}_{m}^{\varepsilon}(t)\ls \Mm+\eps \Mmd   +t \fg{m}^{\varepsilon}(t).
\end{equation}
\end{prop}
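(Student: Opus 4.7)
The plan is to exploit the fact that every term in $\mathfrak{F}_m^\varepsilon$ is a lower-order version of a quantity already contained in $\fg{m}^\varepsilon$, so I would just apply the fundamental theorem of calculus in time to each piece. Throughout I use $\dt\eta=v$ and $\dt(\dt^j f)=\dt^{j+1}f$. No commutators, boundary integrals, or good-unknown tricks are needed here; these are reserved for the highest-order tangential and normal estimates outlined in Section 2.3. The only accounting to do is to verify, in each case, that the extra time derivative pushes the quantity into a summand of $\fe{m}^\varepsilon$, $\fd{m}^\varepsilon$, or $\fdb{m}^\varepsilon$, and to separate the truly data-dependent part of the initial energy (bounded by $\Mm$) from the $\delta$-dependent part (paid for by $\eps\Mmd$).

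For the $(p,v,b)$ block, I would write, for $f\in\{p,v,b\}$ and $j=0,\dots,m-1$,
\begin{equation*}
\ns{\dt^j f(t)}_{m-j-1}\le 2\ns{\dt^j f(0)}_{m-j-1}+2t\int_0^t\ns{\dt^{j+1}f}_{m-j-1},
\end{equation*}
with the initial piece bounded by $\se{m}(0)\le\Mm$ via \eqref{gesa122}. For the integrand, a brief case check confirms: for $f=p,b$ and $j\le m-2$, $\ns{\dt^{j+1}f}_{m-(j+1)}$ is a summand of $\fd{m}^\varepsilon$, while $j=m-1$ gives $\ns{\dt^m f}_0$ which is the $j=m$ term of the $\norm{\cdot}_{0,m-j}$ block of $\fe{m}^\varepsilon$; for $f=v$, the cases $j\le m-3$ fall into $\fd{m}^\varepsilon$, $j=m-2$ gives $\ns{\dt^{m-1}v}_1$ caught by the $\norm{\cdot}_{1,m-j-1}$ block of $\fe{m}^\varepsilon$, and $j=m-1$ gives $\ns{\dt^m v}_0$ again in $\fe{m}^\varepsilon$. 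Hence each such time integral is $\ls t\,\fg{m}^\varepsilon(t)$, and the whole $(p,v,b)$ contribution to $\mathfrak{F}_m^\varepsilon$ is bounded by $\Mm+t\fg{m}^\varepsilon(t)$.

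For the three $\eta$-terms I integrate $\dt\eta=v$ from $0$ to $t$ and argue analogously for $\eta$, for $Z^\alpha\eta$ with $|\alpha|\le m$, and for $\nabla^{m+1}\eta$:
\begin{equation*}
\ns{\eta(t)}_m\le 2\ns{\eta_0^\delta}_m+2t\int_0^t\ns{v}_m,\qquad \eps\ns{\eta(t)}_{1,m}\le 2\eps\ns{\eta_0^\delta}_{1,m}+2t\int_0^t\eps\ns{v}_{1,m},
\end{equation*}
and similarly for $\eps^2\ns{\eta(t)}_{m+1}$. The three time integrals are absorbed, respectively, into $\fd{m}^\varepsilon$ (using $\ns{v}_m\subset\fd{m}^\varepsilon$, which needs $m\ge 2$, automatic from $m\ge 4$), into $\fdb{m}^\varepsilon$, and into the $\varepsilon^2$-weighted piece of $\fd{m}^\varepsilon$. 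The initial datum $\ns{\eta_0^\delta}_m\le\Mm$, while both $\eps\ns{\eta_0^\delta}_{1,m}$ and $\eps^2\ns{\eta_0^\delta}_{m+1}$ are bounded by $\eps\Mmd$ using the smoothed-data bound $\ns{\eta_0^\delta}_{m+1}\le\Mmd$ from \eqref{gesa1}. Using $T_0^\epsd\le 1$ to swallow the resulting $t^2$ into $t$ then yields \eqref{etaest}. The whole proof is essentially bookkeeping; the main ``obstacle'' is simply to check that each higher-derivative quantity lands in some summand of $\fg{m}^\varepsilon$ without orphaning any boundary case of $j$, and that the one-derivative gap between $\ns{\eta_0^\delta}_m$ and $\ns{\eta_0^\delta}_{m+1}$ is exactly compensated by the $\eps$-weights built into $\mathfrak{F}_m^\varepsilon$.
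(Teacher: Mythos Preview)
Your proposal is correct and follows exactly the same approach as the paper: both use the fundamental theorem of calculus together with $\dt\eta=v$ to bound $\mathfrak{F}_m^\varepsilon(t)$ by $\mathfrak{F}_m^\varepsilon(0)+t\fg{m}^\varepsilon(t)$, and then bound the initial term via \eqref{gesa122} and \eqref{gesa1}. You simply spell out the case-by-case verification that each time-differentiated quantity lands in a summand of $\fe{m}^\varepsilon$, $\fd{m}^\varepsilon$, or $\fdb{m}^\varepsilon$, which the paper leaves implicit in the phrase ``follows directly from the definitions.''
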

\begin{proof}
It follows directly from  the fundamental theorem of calculus,
the definitions  of   $\fg{m}^\eps$ and $\mathfrak{F}_{m}^{\varepsilon}$ and using $\dt\eta=v$ that
\beq
\mathfrak{F}_{m}^{\varepsilon}(t)\leq \mathfrak{F}_{m}^{\varepsilon}(0)+t \fg{m}^{\varepsilon}(t).
\eeq
Note that, by \eqref{gesa122} and \eqref{gesa1},
\beq
\mathfrak{F}_{m}^{\varepsilon}(0)\ls \Mm +\eps\Mmd +\eps \ns{\eta_0^\delta}_{m+1}\le \Mm+\eps\Mmd  .
\eeq
Then \eqref{etaest} follows.
\end{proof}

Denote
\beq
 \me(t):=P\Big(\sup_{[0,t]}\mathfrak{F}_{m}^{\varepsilon} \Big).
\eeq
Then by \eqref{etaest},
\beq\label{lalaes}
  \me(t)\le \Mm+\eps\Mmd  +t P(\fg{m}^{\varepsilon}(t)).
\eeq

\section{Tangential energy estimates} \label{lasts5}

In this section, we will derive the  energy evolution estimates for the highest order tangential derivatives of $(p,v,b)$ and the boundary regularity of $ {\eta}$ on $\Sigma$. It is more convenient to use  the following  equations derived from \eqref{MHDve}:
 \begin{equation}\label{MHDv0s}
\begin{cases}
\frac{1}{\gamma p}\dt q-\frac{1}{\gamma p}   b \cdot \dt  b + \diva v = 0 &\text{in  }\Omega\\
\rho\partial_tv  +\naba  q-b\cdot\naba b-\eps \da v=  \Psi^\epsd    &\text{in } \Omega\\
\partial_tb-\frac{b}{\gamma p} \dt q+\frac{b}{\gamma p}  b\cdot \dt b-b\cdot\naba v=0&\text{in } \Omega ,
 \end{cases}
\end{equation}
where $ q=p+\hal |b|^2$ is the total pressure.
 Recall   the boundary conditions:
 \begin{equation}\label{MHDv0sb}
\jump{q}=0\text{ and }\jump{b}=\jump{v}=\jump{\p_3v}=  \jump{\eta}=\jump{\p_3\eta}=   0    \text{ on }\Sigma,\
v=0\text{ and }\eta_3=\pm 1    \text{ on }\Sigma_\pm.
\end{equation}

One considers first the estimates of the highest order tangential spatial derivatives. Let $Z^m$ be any $  Z^\al$ for  $\al\in \mathbb{R}^3$ with $|\al|=m$. To commute $Z^m$ with each term in \eqref{MHDv0s}, it is  useful to establish the following general expressions. Recall the commutator notations \eqref{cconm1} and \eqref{cconm2}.
 For $i=1,2,3,$ one has
\begin{equation}
 Z^m (\pa^\a_if) =  \pa^\a_i Z^m f +\a_{i3}\[Z^m, \pa_3  \]f+ Z^m{\a_{ij}} \pa_j f+\left[Z^m, {\a_{ij}} ,\pa_j f\right]
\end{equation}
and   $Z\a_{ij}=-\a_{i\ell}Z\pa_\ell  \eta_k  \a_{kj}$ implies
\begin{align}
 Z^m\a_{ij} \pa_j f & =-\a_{i\ell}\pa_\ell Z^{m}\eta_k\a_{kj}\pa_j f-\a_{i3}\[Z^{m},\p_3\] \eta_k  \a_{kj}\pa_j f
-\left[Z^{m-1}, \a_{i\ell}\a_{kj} \right]Z \pa_\ell  \eta_k \pa_j f\nonumber
\\& =-\pa^\a_i( Z^{m}\eta\cdot\nabla_\a  f)+Z^{m}\eta\cdot\nabla_\a ( \pa^\a_i f)-\a_{i3}\[Z^{m},\p_3\] \eta_k  \a_{kj}\pa_j f\nonumber
\\&
\quad
-\left[Z^{m-1}, \a_{i\ell}\a_{kj}\right]Z\pa_\ell \eta_k\pa_j f.
\end{align}
It then holds that
\begin{equation}\label{commf}
 Z^m (\pa^\a_if) =  \pa^\a_i\left(Z^m f- Z^{m}\eta\cdot\nabla_\a  f\right)+ \mathcal{C}_i^m(f),
\end{equation}
where
\begin{align}\label{commfno}
\mathcal{C}_i^m(f)=&\a_{i3}\[Z^m, \pa_3  \]f+Z^{m}\eta\cdot\nabla_\a ( \pa^\a_i f)-\a_{i3}\[Z^{m},\p_3\] \eta_k  \a_{kj}\pa_j f
\nonumber
\\&-\left[Z^{m-1}, \a_{i\ell}\a_{kj}\right]Z\pa_\ell \eta_k\pa_j f
+\left[Z^m, {\a_{ij}} ,\pa_j f\right].
\end{align}
It was first observed by Alinhac \cite{A} that the highest order term of $\eta$ will be cancelled when one uses the good unknown $Z^m f- Z^{m}\eta\cdot\nabla_\a  f$, which allows one to perform high order energy estimates.

\begin{lem}
It holds that
\begin{equation}\label{comest}
\norm{\mathcal C^m (f)}_0\leq  P(\norm{\eta}_{m})\norm{f}_m.
\end{equation}
\end{lem}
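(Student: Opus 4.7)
The plan is to bound each of the five contributions to $\mathcal{C}_i^m(f)$ in \eqref{commfno} separately, using Leibniz, Moser-type product and commutator estimates in the tangential calculus, and the three-dimensional Sobolev embedding $H^{m-2}(\Omega_\pm)\hookrightarrow L^\infty(\Omega_\pm)$, which is precisely available because $m\ge 4$. Throughout, $\a=(\nabla\eta)^{-T}$ is a smooth function of $\nabla\eta$, so every Sobolev or $L^\infty$ norm of $\a$, $\nabla\a$, or $Z\a$ is controlled by $P(\|\eta\|_m)$.

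First I would treat the two pieces of \eqref{commfno} containing $[Z^m,\partial_3]$. Since $Z_1,Z_2$ commute with $\partial_3$ while $Z_3=x_3(x_3^2-1)\partial_3$, the expansion $Z_3^k=\sum_{\ell\le k}p_\ell(x_3)\partial_3^\ell$ with smooth bounded polynomial coefficients yields, by direct computation,
\begin{equation*}
[Z^\alpha,\partial_3]=-\sum_{\ell\le\alpha_3}p_\ell'(x_3)\,\partial_1^{\alpha_1}\partial_2^{\alpha_2}\partial_3^\ell,
\end{equation*}
so $[Z^m,\partial_3]$ is a differential operator of total order $m$ with smooth bounded coefficients on $\overline\Omega$; hence $\|[Z^m,\partial_3]g\|_0\lesssim\|g\|_m$ for any $g$. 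Applied with $g=f$, together with $\|\a\|_{L^\infty}\lesssim P(\|\eta\|_m)$, this controls the first term; applied with $g=\eta$, together with $\|\a\|_{L^\infty}\|\nabla f\|_{L^\infty}\lesssim P(\|\eta\|_m)\|f\|_m$ (using $H^{m-1}\hookrightarrow L^\infty$), it controls the third term.

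For the second term, I would place $Z^m\eta$ in $L^2$ with $\|Z^m\eta\|_0\le\|\eta\|_m$, and the second-order quantity $\nabla_\a(\pa_i^\a f)$ in $L^\infty$; since $\nabla^2 f$ and $\nabla\a$ both lie in $H^{m-2}\hookrightarrow L^\infty$ when $m\ge 4$, one finds $\|\nabla_\a(\pa_i^\a f)\|_{L^\infty}\lesssim P(\|\eta\|_m)\|f\|_m$. The remaining two terms are classical tangential commutators. For the fourth, the standard tame commutator estimate
\begin{equation*}
\|[Z^{m-1},g]h\|_0\lesssim \|\nabla g\|_{L^\infty}\|h\|_{m-2}+\|g\|_{m-1}\|h\|_{L^\infty},
\end{equation*}
applied with $g=\a_{i\ell}\a_{kj}$ and $h=Z\pa_\ell\eta_k\pa_j f$, together with $\|Z\nabla\eta\|_{L^\infty}+\|Z\nabla\eta\|_{m-2}\lesssim P(\|\eta\|_m)$, produces $P(\|\eta\|_m)\|f\|_m$. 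For the symmetric commutator $[Z^m,\a_{ij},\pa_j f]$ in the fifth term, the analogous tame estimate $\|[Z^m,g,h]\|_0\lesssim\|\nabla g\|_{L^\infty}\|h\|_{m-1}+\|g\|_{m-1}\|\nabla h\|_{L^\infty}$ with $g=\a_{ij}$ and $h=\pa_j f$ (both factors always receiving at least one $Z$) yields the same bound.

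The main technical obstacle is the bookkeeping: top-order quantities such as $Z^m\eta$, $[Z^m,\partial_3]\eta$, and $Z^{m-1}$-derivatives of $\a\a$ must all be placed in $L^2$, forcing the remaining factors, which involve up to \emph{two} derivatives of $f$ or $\eta$, into $L^\infty$. This is exactly what the hypothesis $m\ge 4$ provides in three dimensions via $H^{m-2}\hookrightarrow L^\infty$. Crucially, the anisotropic weight in $Z_3$ keeps $[Z^m,\partial_3]$ of total order $m$ rather than $m+1$, which is the essential feature that prevents any loss of derivatives in the estimate.
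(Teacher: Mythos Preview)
Your proposal is correct and follows essentially the same approach as the paper: you split $\mathcal{C}_i^m(f)$ into its five summands, place each top-order factor in $L^2$ and the remaining factors in $L^\infty$ via $H^{m-2}\hookrightarrow L^\infty$ (which requires $m\ge 4$), and use the fact that $[Z^m,\partial_3]$ is of order $m$. The only cosmetic difference is in the fourth term, where the paper first pulls $\partial_j f$ out in $L^\infty$ and then estimates $\|[Z^{m-1},\a_{i\ell}\a_{kj}]Z\partial_\ell\eta_k\|_0$, whereas you apply the commutator estimate with $h=Z\partial_\ell\eta_k\,\partial_j f$ as a single product; both yield the same bound.
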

\begin{proof}
First,  using Sobolev's embedding theorem, one has that since $m\ge 4$,
\begin{align}
&\norm{\a_{i3}\[Z^m, \pa_3  \]f}_0+\norm{Z^{m}\eta\cdot\nabla_\a ( \pa^\a_i f)}_0+\norm{\a_{i3}\[Z^{m},\p_3\] \eta_k  \a_{kj}\pa_j f}_0\nonumber
\\&\quad\le\norm{\a_{i3}  }_{L^\infty} \norm{\[Z^{m},\p_3\] f   }_0+\norm{Z^{m}\eta}_0\norm{\nabla_\a ( \pa^\a_i f)}_{L^\infty}+\norm{\[Z^{m},\p_3\] \eta_k   }_0\norm{\a_{i3}   \a_{kj}\pa_j f}_{L^\infty}
\nonumber
\\&\quad\leq  P(\norm{\eta}_{m})\norm{f}_m.
\end{align}
By the standard commutator estimates, one obtains
\begin{align}\label{Calpha1}
\norm{[Z^{m}, \a_{ij}, \partial_j f]}_0 \ls\norm{Z\a}_{0,m-2}\norm{Z\nabla f}_{L^\infty}+\norm{Z\a}_{L^\infty}\norm{Z\nabla f}_{0,m-2}\leq  P(\norm{\eta}_{m})\norm{f}_m
\end{align}
and
\begin{align}\label{Calpha3}
\norm{\left[Z^{m-1}, \a_{i\ell}\a_{kj}\right]Z\pa_\ell \eta_k\pa_j f}_0 \le \norm{\left[Z^{m-1}, \a_{i\ell}\a_{kj}\right]Z\pa_\ell \eta_k}_0\norm{\nabla f}_{L^{\infty} }\leq  P(\norm{\eta}_{m})\norm{f}_3.
\end{align}
Then \eqref{comest} follows.
\end{proof}

Define the   good unknowns:
\begin{equation}
\label{gun1}
 \mathcal{V}^m = Z^m  v-  Z^m \eta \cdot\nabla_\a  v,\  \mathcal{Q}^m = Z^m  q-  Z^m \eta \cdot\nabla_\a  q
\end{equation}
and
\begin{equation}
\label{gun2}
 \mathcal{B}^m = Z^m b-  Z^m \eta_0^\delta \cdot\nabla_{\a_0^\delta } b.
\end{equation}

\begin{lem}
It holds that
\begin{equation}\label{MHDveermm}
\begin{cases}
 \frac{1}{\gamma p} \dt  \Q- \frac{1}{\gamma p}b \cdot \dt \B + \diva \V =
F^{1,m}
 &\text{in  }\Omega\\
\rho\partial_t\V  +\naba  \Q-b\cdot\naba \B-\eps \da \mathcal{V}^m=F^{2,m} +Z^m\Psi^\epsd      &\text{in } \Omega\\
\partial_t\B- \frac{b}{\gamma p} \dt \Q+   \frac{b}{\gamma p}  b\cdot \dt \B-b\cdot\naba \V=F^{3,m}  &\text{in } \Omega ,
 \end{cases}
\end{equation}
where $F^{1,m}$, $F^{2,m}$ and $F^{3,m}$ are defined by \eqref{ceqValpha02}--\eqref{eqValpha03}, respectively, and that
\begin{equation}\label{MHDveermm2}
\begin{split}
 &\jump{\mathcal{Q}^m }=- J^{-1} Z^m \eta\cdot\n\jump{ \p_3 q},\quad \jump{   \mathcal{B}^m }=-  (J_0^\delta )^{-1} Z^m \eta_0^\delta \cdot\n_0^\delta\jump{ \p_3b} ,
 \\ &\jump{\mathcal{V}^m} =0,\quad \jump{\p_3   \mathcal{V}^m }=-  Z^m \eta \cdot \jump{\p_3(\nabla_\a  v)}  \text{on }\Sigma \text{ and }
\mathcal{V}^m  =0     \text{ on }\Sigma_\pm.
 \end{split}
\end{equation}
Moreover,
\begin{align} \label{qesm0}
\ns{F^{1,m}}_0   +\ns{F^{2,m}}_0+\ns{F^{3,m}}_0  \le \me(  \mathfrak{E}_{m}^{\varepsilon } + \mathfrak{D}_{m}^{\varepsilon } ).
\end{align}
\end{lem}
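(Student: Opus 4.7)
The plan is to apply $Z^m$ termwise to each of the three equations in \eqref{MHDv0s}, using the master identity \eqref{commf} to convert every $\mathcal{A}$-derivative into the same operator acting on the corresponding good unknown plus a lower-order commutator. Concretely, \eqref{commf} applied with $f=q$ and $f=v$ yields $Z^m\naba q=\naba\Q+\mathcal{C}^m(q)$, $Z^m\diva v=\diva\V+\mathcal{C}^m(v)$, and $Z^m\naba v=\naba\V+\mathcal{C}^m(v)$, so the top-order $\eta$-contributions cancel by the very definitions in \eqref{gun1}. For the time derivatives I would simply write $Z^m\dt f=\dt\mathcal{F}^m+\dt(\mathrm{correction})$; the $\dt$ of the correction produces only lower-order products of $\dt\eta=v$ with $\naba f$, or, in the $b$-equation, of the time-independent $Z^m\eta_0^\delta$ with $\nabla_{\a_0^\delta}\dt b$, all absorbable into the remainder.

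The first key subtlety is the magnetic convection $b\cdot\naba b$ in the second equation (and similarly $b\cdot\naba v$ in the third). Rather than applying \eqref{commf} naively to $b_j\p_j^\a b$, the approach is to invoke Cauchy's integral \eqref{re00} in the form \eqref{s0202}, namely $b\cdot\naba=\rho(\rho_0^\delta)^{-1}b_0^\delta\cdot\nabla_{\a_0^\delta}$. Because the operator on the right has $t$-independent and $\eta$-independent coefficients, $Z^m$ commutes through it at the cost of only lower-order terms, and the natural good unknown that emerges is precisely the asymmetric $\B$ in \eqref{gun2}, built from $Z^m\eta_0^\delta$ and $\nabla_{\a_0^\delta}$ rather than $Z^m\eta$ and $\naba$. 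This produces exactly $b\cdot\naba\B$ in \eqref{MHDveermm}.

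The viscous term $\eps\da v=\eps\diva\naba v$ requires applying \eqref{commf} twice, yielding $\eps\da\V$ plus two layers of commutators of schematic form $\eps\naba\mathcal{C}^{m-1}(v)$ and $\eps\mathcal{C}^m(\naba v)$; in $L^2$ these are controlled by $\me\,\fd{m}^\eps$ thanks to the $\eps^2\ns{\dt^j v}_{m-j+1}$ entries of \eqref{ddef}. Commutators of $Z^m$ with the scalar coefficients $\rho$, $1/(\gamma p)$ and $b/(\gamma p)$ generate standard Leibniz-type terms whose $L^2$-norms, after using Sobolev embedding ($m\ge 4$) and \eqref{comest}, are bounded by $\me\,(\fe{m}^\eps+\fd{m}^\eps)$. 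Collecting every remainder into $F^{1,m},F^{2,m},F^{3,m}$ gives \eqref{MHDveermm} together with \eqref{qesm0}.

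For the boundary relations \eqref{MHDveermm2}, the plan is to take jumps of the definitions \eqref{gun1}--\eqref{gun2}. Since $Z^m$ is tangential to $\Sigma$ and $q,v,b,\eta,\eta_0^\delta$ are continuous across $\Sigma$ by \eqref{MHDv0sb} and \eqref{eta0inde}, the leading $Z^mf$ terms contribute zero jump. Only the normal component of $\nabla q$ is discontinuous on $\Sigma$, so $\jump{\naba q}_i=\a_{i3}\jump{\p_3q}$; combined with $\a e_3=J^{-1}\n$ this yields $\jump{\Q}=-J^{-1}(Z^m\eta\cdot\n)\jump{\p_3q}$, and the same calculation gives $\jump{\B}$ and $\jump{\V}$. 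The latter vanishes since $\jump{\p_3v}=0$ by Proposition~\ref{prop2}, and $\jump{\p_3\V}$ follows by one extra normal differentiation of the definition of $\V$ together with $\jump{\p_3\eta}=0$. The Dirichlet condition $\V=0$ on $\Sigma_\pm$ is immediate from $v=0$ there and the tangentiality of every $Z^\alpha$ to $\Sigma_\pm$. I anticipate the main obstacle will be the $\eps$-weighted bookkeeping for the viscous commutators: each remainder produced by $Z^m(\da v)$ must carry enough powers of $\eps$ to be absorbed by $\fd{m}^\eps$ rather than by $\eps^{-1}\fd{m}^\eps$, which is precisely why the top-regularity entry $\eps^2\ns{v}_{m+1}$ had to be included in \eqref{ddef} in the first place.
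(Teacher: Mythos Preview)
Your proposal is correct and follows essentially the same route as the paper: apply \eqref{commf} to each $\mathcal{A}$-derivative, rewrite $b\cdot\naba b$ via Cauchy's integral \eqref{s0202} to land on the asymmetric $\B$, handle $\eps\da v$ by a double application of \eqref{commf}, and read off the jump conditions directly from the definitions \eqref{gun1}--\eqref{gun2} together with \eqref{MHDv0sb}.

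One clarification is worth flagging. Your parenthetical ``and similarly $b\cdot\naba v$ in the third'' is misleading if taken literally: the paper does \emph{not} invoke Cauchy's integral for $b\cdot\naba v$. If you rewrote $b\cdot\naba v=\rho(\rho_0^\delta)^{-1}b_0^\delta\cdot\nabla_{\a_0^\delta}v$ and then applied the $\eta_0^\delta$-version of \eqref{commf}, the good unknown that emerges would be $Z^m v-Z^m\eta_0^\delta\cdot\nabla_{\a_0^\delta}v$, not $\V=Z^m v-Z^m\eta\cdot\naba v$, and the third line of \eqref{MHDveermm} would not close as stated. Instead the paper treats $b\cdot\naba v$ by the straightforward splitting $Z^m(b\cdot\naba v)=[Z^m,b]\cdot\naba v+b\cdot\naba\V+b\cdot\mathcal{C}^m(v)$, keeping the $\eta$-based $\V$. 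The asymmetry is deliberate: Cauchy's integral is invoked only where the unknown being differentiated is $b$, so that the resulting good unknown $\B$ carries the $\eta_0^\delta$-correction whose jump on $\Sigma$ is the harmless term in \eqref{MHDveermm2}. For $b\cdot\naba v$ no such device is needed because $\V$ already has zero jump.
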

\begin{proof}
First,
\eqref{commf} and \eqref{gun1} imply
\begin{equation}
Z^m \diva v=  \diva  \mathcal{V}^m+  \mathcal{C}_i^m(v_i) ,
\end{equation}
\begin{align}
Z^m \nabla_\a q=\nabla_\a  \mathcal{Q}^m  + \mathcal{C}^m(q) ,
\end{align}
\begin{align}
  Z^m\da v&=Z^m\pa^\a_i\pa^\a_i v
  = \pa^\a_i\left(Z^m \pa^\a_i v- Z^{m}\eta\cdot\naba \pa^\a_i v\right) + \mathcal{C}_i^m(\pa^\a_i v)\nonumber
\\&=\da \mathcal{V}^m+    \pa^\a_i\left(\mathcal{C}_i^m(v) - Z^{m}\eta\cdot\naba \pa^\a_i v\right) +  \mathcal{C}_i^m(\pa^\a_i v)
\end{align}
and
\beq
Z^m(b\cdot\nabla_{\a} v )  =\[Z^m,b\]\cdot\nabla_{\a} v  + b\cdot Z^m(\nabla_{\a} v )
= \[Z^m,b\]\cdot\nabla_{\a} v + b\cdot\nabla_{\a}  \mathcal{V}^m +b\cdot \mathcal{C}^m(v) .
 \eeq
Using Cauchy's integral  \eqref{re00} and \eqref{jequ1}, one has
\beq\label{gun3}
b\cdot\naba  \equiv\a^Tb\cdot\nabla=J^{-1} J_0^\delta (\a_0^\delta )^Tb_0^\delta  \cdot \nabla  =   \rho (\rho_0^\delta )^{-1}  b_0^\delta  \cdot \nabla_{\a_0^\delta }  .
\eeq
Then denoting $\mathcal{C}_0^{m,\delta}$ for $\mathcal{C}^m$ in \eqref{commfno} with $\eta$ replaced by $\eta_0^\delta $, by \eqref{gun3}, \eqref{commf} and \eqref{gun2}, one obtains
\begin{align}
Z^m (b\cdot\naba b)&=\[Z^m ,\rho (\rho_0^\delta )^{-1}  b_0^\delta \] \cdot \nabla_{\a_0^\delta }b+ \rho (\rho_0^\delta )^{-1}  b_0^\delta  \cdot Z^m(\nabla_{\a_0^\delta }b)\nonumber
\\&=  \[Z^m,\rho (\rho_0^\delta )^{-1} b_0^\delta \] \cdot \nabla_{\a_0^\delta } b
+  b\cdot\naba \mathcal{B}^m+\rho (\rho_0^\delta )^{-1} b_0^\delta \cdot \mathcal{C}_0^{m,\delta}(b) .
\end{align}
Hence, applying $ Z^m $ to  \eqref{MHDv0s} implies  \eqref{MHDveermm} with
\begin{align}\label{ceqValpha02}
F^{1,m}  &=-[Z^m,  \frac{1}{\gamma p} ]\dt   q+[Z^m, \frac{1}{\gamma p}      b] \cdot \dt  b-  \frac{1}{\gamma p}  \dt ( Z^m \eta \cdot\nabla_\a  q )\nonumber\\&\quad+ \frac{1}{\gamma p}      b \cdot  ( Z^m \eta_0^\delta \cdot\nabla_{\a_0^\delta }  \dt b )- \mathcal{C}_i^m(v_i) ,
\end{align}
\begin{align}\label{ceqValpha01}
F^{2,m} & =-[Z^m, \rho]\dt   v-\rho \dt\left( Z^m \eta \cdot\nabla_\a  v\right) - \mathcal{C}^m(q) +\left[ Z^m ,   \rho (\rho_0^\delta )^{-1} b_0^\delta \right]\cdot \nabla_{\a_0^\delta } b
\nonumber\\&\quad  +   \rho (\rho_0^\delta )^{-1}b_0^\delta \cdot \mathcal{C}_{0}^{m,\delta}(b) +\eps   \pa^\a_i\left(\mathcal{C}_i^m(v) - Z^{m}\eta\cdot\naba \pa^\a_i v\right) +\eps  \mathcal{C}_i^m(\pa^\a_i v)
\end{align}
and
\begin{align} \label{eqValpha03}
F^{3,m}= & [Z^m,  \frac{b}{\gamma p}]\dt   q-[Z^m, \frac{b}{\gamma p}  b] \cdot \dt  b    -Z^m \eta_0^\delta \cdot\nabla_{\a_0^\delta }\dt b+ \frac{b}{\gamma p} \dt\left( Z^m \eta \cdot\nabla_\a  q\right)\nonumber\\&- \frac{b}{\gamma p}  b\cdot (Z^m \eta_0^\delta \cdot\nabla_{\a_0^\delta }\dt b)+ \[Z^m,b\]\cdot\nabla_{\a} v+b\cdot \mathcal{C}^m(v).
\end{align}

Next,  \eqref{MHDv0sb} and \eqref{gun1}--\eqref{gun2}  imply
\beq
\jump{\mathcal{Q}^m} =  -  Z^m \eta \cdot \jump{\nabla_\a  q}=- Z^m \eta_j\a_{j3}\jump{ \p_3 q}=- J^{-1} Z^m \eta\cdot\n\jump{ \p_3 q}\text{ on }\Sigma,
\eeq
 \beq
 \jump{   \mathcal{B}^m }=-  Z^m \eta_0^\delta \cdot \jump{\nabla_{\a_0^\delta } b}=-(J_0^\delta )^{-1} Z^m \eta_0^\delta \cdot\n_0^\delta  \jump{ \p_3b} \text{ on }\Sigma,
 \eeq
\beq
  \jump{    \mathcal{V}^m }=0,\quad\jump{\p_3   \mathcal{V}^m }=-  \jump{\p_3(Z^m \eta \cdot    \nabla_\a  v)}=-  Z^m \eta \cdot \jump{\p_3(\nabla_\a  v)}\text{ on }\Sigma
 \eeq
 and
 \begin{equation}
 \mathcal{V}^m =-Z^m\eta_j\a_{j3}\p_3 v=-J^{-1}Z^m\eta_3\p_3 v= 0\text{ on }\Sigma_\pm.\end{equation}
Thus \eqref{MHDveermm2} follows.

Now one estimates $F^{1,m}$, $F^{2,m}$ and $F^{3,m}$. It follows similarly as for \eqref{comest} and the definitions \eqref{edef} and \eqref{ddef} of  $\mathfrak{E}_{m}^{\varepsilon } $ and $ \mathfrak{D}_{m}^{\varepsilon } $ that, since $\dt\eta=v $ and $m\ge 4$,
\beq
\ns{F^{1,m}}_0 +\ns{F^{3,m}}_0 \le \me(  \mathfrak{E}_{m}^{\varepsilon } + \mathfrak{D}_{m}^{\varepsilon } )
 \eeq
 and
\beq
\ns{F^{2,m}}_0 \le
 \me \(\mathfrak{E}_{m}^{\varepsilon } + \mathfrak{D}_{m}^{\varepsilon }+\eps^2\norm{\eta}_{m+1}^2+\eps^2\norm{v}_{m+1}^2\)   \le \me(  \mathfrak{E}_{m}^{\varepsilon } + \mathfrak{D}_{m}^{\varepsilon } ).
\eeq
One then concludes \eqref{qesm0}.
\end{proof}

Now we state the estimates of the highest order tangential spatial derivatives.
\begin{prop}\label{tane}
For $t\in [0,T_0^\epsd]$ with $T_0^\epsd\le 1$, it holds that
\begin{align} \label{spaes}
 \ns{(p, v, b)(t) }_{0,m} +            \as{   \eta(t)}_m+\eps\int_0^t \norm{  v }_{1,m}^2 \le \me(t)\(  \Mm+\eps\Mmd  +  t^{1/2}\fg{m}^\eps(t)       \).
\end{align}
\end{prop}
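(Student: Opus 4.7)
The approach is the good-unknown energy method of Alinhac adapted to the two-phase structure. The plan is to test the good-unknown system \eqref{MHDveermm} by the weighted combinations $J\V$, $J(\Q - b\cdot\B)/(\gamma p)$, and $J\B$ in the momentum, pressure, and magnetic equations respectively, integrate over $\Omega_\pm$, and sum. Using \eqref{jequ} for $\dt J$, the Piola identity $\p_j(J\a_{ij})=0$ for integration by parts, and the boundary conditions in \eqref{MHDveermm2} (in particular $\V=0$ on $\Sigma_\pm$), the cross terms arrange themselves into a total time derivative together with the viscous dissipation $\eps\int_\Omega J|\naba\V|^2$, producing exactly the identity \eqref{s02}. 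The remainder $\sum_\mathcal{R}$ collects commutators, corrector contributions $\int_\Omega J Z^m\Psi^\epsd\cdot\V$, and lower-order terms; by \eqref{qesm0}, the corrector bound \eqref{psies}, and Proposition~\ref{tres1}, its time integral is bounded by $\me(t)(\Mm+\eps\Mmd)+t^{1/2}P(\fg{m}^\eps(t))$ via Cauchy--Schwarz.

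The crux is the treatment of the two boundary integrals on $\Sigma$. For the pressure-jump term, substituting $\jump{\Q}=-J^{-1}Z^m\eta\cdot\n\jump{\p_3 q}$ and $\V\cdot\n = \dt(Z^m\eta)\cdot\n + (\text{l.o.t.})$ (via $\dt\eta=v$) and integrating by parts in time produces the symmetric structure \eqref{s022}, which outputs a boundary quadratic form in $Z^m\eta\cdot\n$ with coefficient $\jump{\p_3 q}$ of indeterminate sign. For the magnetic-jump term, the choice of $\B$ through $\nabla_{\a_0^\delta}$ in \eqref{gun2} combined with Cauchy's integral \eqref{s0202} rewrites $\int_\Sigma b\cdot\n\jump{\B}\cdot\V$ as a total time derivative of a boundary quadratic in $Z^m\eta$, yielding \eqref{s03}. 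After integrating over $[0,t]$ both boundary contributions combine to an upper bound that, besides the remainders already controlled, contains only $|Z^m\eta(t)|_0^2$.

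To close the estimate without any sign assumption on $\jump{\p_3 q}$, we upgrade the left-hand side by exploiting the transversality of $b$. Using Cauchy's integral once more to re-express $\B$ in terms of $b_0^\delta\cdot\nabla_{\a_0^\delta}\eta$ and introducing the third good unknown $\Xi^m := Z^m\eta - Z^m\eta_0^\delta\cdot\nabla_{\a_0^\delta}\eta$, the magnetic energy piece $\int_\Omega J|\B|^2$ is effectively enhanced to control $\ns{(\a_0^\delta)^T b_0^\delta\cdot\nabla\Xi^m}_0$. Since $((\a_0^\delta)^T b_0^\delta)_3 = (J_0^\delta)^{-1}b_0^\delta\cdot\n_0^\delta$ is uniformly bounded away from zero near $\Sigma\cup\Sigma_\pm$ by \eqref{fadl1}, Lemma~\ref{tra_es} applied to $\Xi^m$ yields the transversal trace inequality \eqref{s05}. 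Since $|Z^m\eta_0^\delta\cdot\nabla_{\a_0^\delta}\eta|_0^2$ is controlled by $\me(t)\Mm$, this trades $|Z^m\eta|_0^2$ for a small multiple of the enhanced energy, allowing absorption via Young's inequality and producing the boundary regularity $|\eta(t)|_m^2$ on the left of \eqref{spaes}.

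The main obstacle is precisely the magnetic boundary integral: unlike its pressure counterpart it possesses no intrinsic time-symmetry, and it is this obstruction that forced the Rayleigh--Taylor sign assumption and 2D restriction in \cite{MTT1,MTT}. The two new devices, namely defining $\B$ and $\Xi^m$ through $\nabla_{\a_0^\delta}$ rather than $\naba$ (which is natural in view of Cauchy's integral and preserves the time-derivative structure of the boundary output), and the transversal regularization of Lemma~\ref{tra_es}, together overcome it in both 2D and 3D. Finally, assembling the differential inequality, using $\fe{m}^\eps(0)\le\Mm+\eps\Mmd$ from \eqref{gesa122}, and bounding the remaining time integrals by $t^{1/2}P(\fg{m}^\eps(t))$ via Cauchy--Schwarz, yields \eqref{spaes}.
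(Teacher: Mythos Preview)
Your proposal is correct and follows essentially the same route as the paper's proof: good-unknown energy identity \eqref{ttt1}, handling of the two $\Sigma$-boundary integrals via \eqref{reg1} and \eqref{troublesome} (the latter enabled by the choice \eqref{gun2} of $\B$), then recovery of $\abs{Z^m\eta}_0^2$ through Cauchy's integral, the auxiliary unknown $\Xi^m$, and the transversal trace Lemma~\ref{tra_es}. One small imprecision: the test functions are literally $J\Q$, $J\V$, $J\B$ for the three equations in \eqref{MHDveermm}, not $J(\Q-b\cdot\B)/(\gamma p)$ for the pressure equation; the combination $\abs{\Q-b\cdot\B}^2$ emerges only after summing, as in \eqref{ttt1}. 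Also, the paper explicitly treats the extra viscous boundary term $\eps\int_\Sigma\jump{\naba\V}\n\cdot\V$ arising from $\jump{\p_3\V}\neq 0$ (see \eqref{ttt100056}--\eqref{ttt10020}), which you absorb tacitly into the dissipation; this is harmless but worth noting.
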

\begin{proof}
Taking the $L^2(\Omega)$ inner product of the   equations in \eqref{MHDveermm} with $J\Q$,  $J\mathcal{V}^m $  and  $J\mathcal{B}^m$, respectively,  adding the resulting together, and then integrating by parts in $t$ and by a simple combination, one deduces that, since $\dt(\rho J)=0$,
\begin{align}\label{ttt1}
& \dfrac{1}{2}\dfrac{d}{dt}\int_{\Omega} J(\frac{1}{\gamma p} \abs{\Q -b\cdot \B}^2 +\rho \abs{\mathcal{V}^m }^2 + \abs{\mathcal{B}^m }^2 )
 \nonumber
\\&\quad    +\int_{\Omega}J \diva ( \mathcal{Q}^m   \mathcal{V}^m)- \int_\Omega Jb\cdot\naba (\mathcal{B}^m \cdot \mathcal{V}^m)-\eps \int_{\Omega}J  \da \mathcal{V}^m\cdot \mathcal{V}^m
 \nonumber
\\&\quad= \int_{\Omega} \Big(\hal\dt( \frac{J}{\gamma p}) (\abs{\Q}^2+\abs{   b\cdot  \B}^2)+ \frac{J}{\gamma p}  \dt b\cdot  \B b\cdot \B  +\hal\dt J  \abs{     \B}^2 -\dt( \frac{Jb}{\gamma p} ) \cdot \B \Q \Big)
 \nonumber
\\&\qquad  +\int_\Omega J (  F^{1,m}  \Q +   F^{2,m}\cdot \mathcal{V}^m +  F^{3,m}\cdot \mathcal{B}^m)+\int_\Omega JZ^m\Psi^\epsd   \cdot \mathcal{V}^m.
\end{align}
By the definitions of $\mathcal{Q}^m,\ \mathcal{B}^m$ and $\mathcal{V}^m$,
\begin{align}\label{baba1}
 &\int_{\Omega} \Big(\hal\dt( \frac{J}{\gamma p}) (\abs{\Q}^2+\abs{   b\cdot  \B}^2)+ \frac{J}{\gamma p}  \dt b\cdot  \B b\cdot \B  +\hal\dt J  \abs{     \B}^2 -\dt( \frac{Jb}{\gamma p} ) \cdot \B \Q \Big)\nonumber
\\&\quad \le \me \(\ns{\Q}_0+\ns{\B}_0+\norm{\B}_0\norm{\Q}_0\) \le \me  \mathfrak{E}_{m}^{\varepsilon },
\end{align}
and by  \eqref{qesm0} and \eqref{psies},
\begin{align}\label{ttt111}
&\int_\Omega J (  F^{1,m}  \Q +   F^{2,m}\cdot \mathcal{V}^m+Z^m\Psi^\epsd   \cdot \mathcal{V}^m +  F^{3,m}\cdot \mathcal{B}^m) \nonumber
\\&\quad\le \me \(\norm{F^{1,m}}_0\norm{\Q}_0+\norm{F^{2,m}}_0\norm{\V}_0+\norm{Z^m\Psi^\epsd}_0\norm{\mathcal{V}^m}_0+\norm{F^{3,m}}_0\norm{\B}_0 \)
\nonumber
\\&\quad \le \me \( \sqrt{ \mathfrak{E}_{m}^{\varepsilon }+ \mathfrak{D}_{m}^{\varepsilon } } \sqrt{\mathfrak{E}_{m}^{\varepsilon }}+ \eps \Mmd\sqrt{\mathfrak{E}_{m}^{\varepsilon }}\).
\end{align}

Now we turn to estimate the left hand side of \eqref{ttt1}.
First,
integrating by parts over $\Omega_\pm$ and using   \eqref{MHDveermm2}, one obtains
\begin{align}\label{ttt100056}
 -\eps \int_{\Omega}J  \da \mathcal{V}^m\cdot \mathcal{V}^m
& =\eps \int_{\Sigma}   \jump{\naba\mathcal{V}^m}\n\cdot  \mathcal{V}^m+\eps \int_{\Omega}J  \abs{\nabla_\a \mathcal{V}^m}^2\nonumber
\\& =-  \eps \int_{\Sigma} J^{-1} |\n|^2 Z^m \eta\cdot \jump{\p_3(\naba v_i)}\mathcal{V}_i^m+\eps \int_{\Omega}J  \abs{\nabla_\a \mathcal{V}^m}^2  .
\end{align}
By the trace theory, one gets
\begin{align}\label{ttt10020}
 \eps \int_{\Sigma} J^{-1} |\n|^2 Z^m \eta\cdot \jump{\p_3(\naba v_i)}\mathcal{V}_i^m&\le  \eps \me \abs{Z^m \eta}_{0}\abs{\mathcal{V}^m}_{0} \le \eps  \me \sqrt{\fe{m}^{\varepsilon }}\norm{\mathcal{V}^m}_{1}.
\end{align}
Next, one estimates the most delicate remaining two terms. Integrating by parts over $\Omega_\pm$ and using \eqref{MHDveermm2}, one has
\begin{align}
\label{gg2}
 -\int_{\Omega}J \diva ( \mathcal{Q}^m   \mathcal{V}^m)  =\int_{\Sigma }\jump{ \mathcal{Q}^m}  \mathcal{V}^m\cdot\n  =  -\int_{\Sigma } \jump{\pa_3 q} J^{-1} Z^m \eta\cdot\n \mathcal{V}^m\cdot\n .
\end{align}
By the definition  of $\mathcal{V}^m$ and since $\dt\eta=v$,  integrating by parts in $t$ yields
\begin{align} \label{reg1}
& -\int_{\Sigma } \jump{\pa_3 q} J^{-1} Z^m \eta\cdot\n \mathcal{V}^m\cdot\n    =-\int_{\Sigma }\jump{\pa_3 q}J^{-1} Z^m \eta\cdot\n  ( Z^m  v -  Z^m   \eta \cdot\nabla_\a  v )\cdot\n\nonumber
  \\&\quad=-\int_{\Sigma }\jump{\pa_3 q}J^{-1} Z^m \eta\cdot\n    Z^m  \dt\eta  \cdot\n+ \int_{\Sigma }\jump{\pa_3 q}J^{-1} Z^m \eta\cdot\n   (Z^m   \eta \cdot\nabla_\a  v)  \cdot\n
\nonumber
  \\&\quad=-\hal \frac{d}{dt}\int_{\Sigma }\jump{\pa_3 q}J^{-1} \abs{Z^m \eta\cdot\n}^2  +\hal \int_{\Sigma }\dt(\jump{\pa_3 q}J^{-1} )\abs{Z^m \eta\cdot\n}^2 \nonumber\\
&\qquad
+\int_{\Sigma }\jump{\pa_3 q}J^{-1} Z^m \eta\cdot\n    Z^m   \eta  \cdot \dt\n + \int_{\Sigma }\jump{\pa_3 q}J^{-1} Z^m \eta\cdot\n   (Z^m   \eta \cdot\nabla_\a  v ) \cdot\n
\nonumber
  \\&\quad\le-\hal \frac{d}{dt}\int_{\Sigma }\jump{\pa_3 q}J^{-1} \abs{Z^m \eta\cdot\n}^2+\me  \as{ Z^m \eta}_0
\nonumber
  \\&\quad\le-\hal \frac{d}{dt}\int_{\Sigma }\jump{\pa_3 q}J^{-1} \abs{Z^m \eta\cdot\n}^2+
 \me \fe{m}^{\eps }.
\end{align}
Similarly,  by using the Piola identity, Proposition \ref{prop1} and \eqref{MHDveermm2}, one deduces
\begin{align}\label{troublesome}
&\int_\Omega Jb\cdot\naba (\mathcal{B}^m \cdot \mathcal{V}^m)= - \int_\Sigma b \cdot \n   \jump{\mathcal{B}^m} \cdot \mathcal{V}^m-\int_\Omega J \diva b \mathcal{B}^m \cdot \mathcal{V}^m
 \nonumber
\\ &\quad = - \int_\Sigma b_0^\delta \cdot \n_0^\delta   \jump{\mathcal{B}^m} \cdot ( Z^m v  -Z^m \eta \cdot\nabla_\a  v  )-\int_\Omega J_0^\delta \diverge_{\a_0^\delta} b_0^\delta \mathcal{B}^m \cdot \mathcal{V}^m
 \nonumber
\\ &\quad= \int_\Sigma   b_0^\delta \cdot \n_0^\delta (J_0^\delta )^{-1} Z^m \eta_0^\delta \cdot\n_0^\delta  \jump{ \p_3b} \cdot ( Z^m \partial_t\eta  -Z^m \eta \cdot\nabla_\a  v  ) -\int_\Omega J_0^\delta \diverge_{\a_0^\delta} b_0^\delta \mathcal{B}^m \cdot \mathcal{V}^m\nonumber
\\ &\quad= \dtt\int_\Sigma b_0^\delta \cdot \n_0^\delta (J_0^\delta )^{-1} Z^m \eta_0^\delta \cdot\n_0^\delta  \jump{ \p_3b} \cdot   Z^m \eta-\int_\Sigma b_0^\delta \cdot \n_0^\delta (J_0^\delta )^{-1} Z^m \eta_0^\delta \cdot\n_0^\delta  \jump{ \p_3 \dt b} \cdot   Z^m \eta \nonumber
\\ &\qquad   -\int_\Sigma   b_0^\delta \cdot \n_0^\delta (J_0^\delta )^{-1} Z^m \eta_0^\delta \cdot\n_0^\delta  \jump{ \p_3b} \cdot  Z^m \eta \cdot\nabla_\a  v -\int_\Omega J_0^\delta \diverge_{\a_0^\delta} b_0^\delta \mathcal{B}^m \cdot \mathcal{V}^m  \nonumber
\\&\quad\le    \dtt\int_\Sigma b_0^\delta \cdot \n_0^\delta (J_0^\delta )^{-1} Z^m \eta_0^\delta \cdot\n_0^\delta  \jump{ \p_3b} \cdot   Z^m \eta +\me  \(\abs{ Z^m \eta_0^\delta }_0 \abs{ Z^m \eta}_0+\norm{\mathcal{B}^m}_0 \norm{ \mathcal{V}^m}_0 \)
 \nonumber
\\&\quad\le   \dtt\int_\Sigma b_0^\delta \cdot \n_0^\delta (J_0^\delta )^{-1} Z^m \eta_0^\delta \cdot\n_0^\delta  \jump{ \p_3b} \cdot   Z^m \eta+\me\(\sqrt{ \fe{m}^{\eps }(0)}\sqrt{ \fe{m}^{\eps }}+\fe{m}^{\eps }\).
 \end{align}

As a consequence of the estimates \eqref{baba1}--\eqref{troublesome}, one deduces from \eqref{ttt1} that
\begin{align} \label{ggdddsa}
& \dfrac{1}{2}\dfrac{d}{dt}\(\int_{\Omega} J(\frac{1}{\gamma p} \abs{\Q -b\cdot \B}^2 +\rho \abs{\mathcal{V}^m }^2 + \abs{\mathcal{B}^m }^2 )-\i_m\)  +\eps \int_{\Omega}J \abs{ \naba \mathcal{V}^m}^2\nonumber
 \\&\quad\le  \me\(\sqrt{\fe{m}^{\varepsilon }(0)+ \fe{m}^{\varepsilon }+ \mathfrak{D}_{m}^{\varepsilon } } \sqrt{\mathfrak{E}_{m}^{\varepsilon }  } + \eps \Mmd\sqrt{\mathfrak{E}_{m}^{\varepsilon }}+ \eps    \sqrt{\fe{m}^{\varepsilon }}\norm{\mathcal{V}^m}_{1}\),
\end{align}
where
\begin{align}\label{ggdddsa2}
\i_m&:=-\dfrac{1}{2} \int_{\Sigma }\jump{\pa_3 q}J^{-1} \abs{Z^m \eta\cdot\n}^2  +\int_\Sigma b_0^\delta \cdot \n_0^\delta (J_0^\delta )^{-1} Z^m \eta_0^\delta \cdot\n_0^\delta  \jump{ \p_3b} \cdot   Z^m \eta
\nonumber
  \\& \le \me  \(\as{ Z^m \eta}_0+ \abs{ Z^m \eta_0^\delta }_0 \abs{ Z^m \eta}_0  \) .
\end{align}
Integrating \eqref{ggdddsa} directly in time,  by \eqref{ggdddsa2}, \eqref{gesa1} and using the following inequality
 \beq\label{nablaes}
\norm{ \nabla f}_0^2\ls \int_{\Omega}J \abs{ \naba f}^2,
 \eeq
 Poincar\'e's inequality, Cauchy's inequality and the Cauchy-Schwarz inequality, one obtains
\begin{align} \label{es11d}
 & \ns{(\Q ,\mathcal{V}^m,\mathcal{B}^m)(t) }_0+\eps\int_0^t \norm{ \mathcal{V}^m}_1^2\nonumber
\\&\quad\le \me(t)\( \Mm+\eps^2\Mmd  +  \as{Z^m\eta(t) }_0 + \int_0^t  \sqrt{  \fe{m}^{\varepsilon }+ \mathfrak{D}_{m}^{\varepsilon }   } \sqrt{\mathfrak{E}_{m}^{\varepsilon } }   \)
\nonumber
\\&\quad\le \me(t)\( \Mm+\eps^2\Mmd  +   \as{Z^m\eta (t)}_0 +  t\fg{m}^\eps(t) +\sqrt{\fg{m}^\eps(t)}\int_0^t  \sqrt{  \mathfrak{D}_{m}^{\varepsilon }  }      \)
\nonumber
\\&\quad\le\me(t)\(  \Mm+\eps^2\Mmd  +   \as{Z^m\eta(t) }_0 +     t^{1/2}\fg{m}^\eps(t)  \)  .
\end{align}
By the definitions of good unknowns again,   \eqref{es11d} and \eqref{etaest}, one deduces
\begin{align} \label{es11d21}
 &\ns{(Z^m p,Z^m v,Z^m b )(t)}_0 +\eps\int_0^t \norm{ Z^m v}_1^2
\nonumber\\ &\quad\le \me(t)\(\mathfrak{F}_{m}^{\varepsilon}(t)+\ns{(\Q ,\mathcal{V}^m,\mathcal{B}^m)(t) }_0+\ns{Z^m\eta(t)}_0+\eps\int_0^t \Big(\norm{ \V}_1^2+\norm{ Z^m\eta}_1^2\Big)\)
\nonumber\\ &\quad \le\me(t)\(  \Mm+\eps\Mmd  +  \as{Z^m\eta(t) }_0 +     t^{1/2}\fg{m}^\eps(t) \)   .
\end{align}

The key point here now is to use the term $ \ns{(Z^m p,Z^m b) }_0$ in the left hand side of \eqref{es11d21} to control the  term $\as{Z^m\eta }_0$ in the right hand side. Recalling Cauchy's integral  \eqref{re00}, then by \eqref{es11d21} and \eqref{etaest}, one has
\begin{align}\label{kes1}
 &\norm{Z^m (b_0^\delta \cdot\nabla_{\a_0^\delta }\eta) }_0^2=\norm{Z^m (\rho_0^\delta  \rho^{-1}b) }_0^2
\le \me\(\mathfrak{F}_{m}^{\varepsilon}+ \ns{(Z^m p,Z^m b)}_0 \)
 \nonumber\\&\quad\le \me\(  \Mm+\eps\Mmd  +  \as{Z^m\eta }_0 +     t^{1/2}\fg{m}^\eps  \).
\end{align}
Introducing further the good unknown
\beq\label{defer}
\Xi^m:=Z^m \eta-  Z^m \eta_0^\delta \cdot\nabla_{\a_0^\delta } \eta,
\eeq
then by \eqref{commf} and \eqref{defer}, one obtains
\begin{align}\label{kes2}
Z^m (b_0^\delta \cdot\nabla_{\a_0^\delta }\eta)&=\[Z^m,b_0^\delta \]\cdot\nabla_{\a_0^\delta }\eta+b_0^\delta \cdot Z^m\nabla_{\a_0^\delta }\eta
\nonumber
\\&=\[Z^m,b_0^\delta \]\cdot\nabla_{\a_0^\delta }\eta+b_0^\delta \cdot\nabla_{\a_0^\delta }  \Xi^m+b_0^\delta \cdot \mathcal{C}_0^{m,\delta}(\eta).
\end{align}
Hence, \eqref{kes2} and \eqref{kes1} imply that, similarly as for \eqref{comest},
\begin{align}\label{kes3}
\ns{b_0^\delta \cdot\nabla_{\a_0^\delta }  \Xi^m }_0&\le \ns{Z^m (b_0^\delta \cdot\nabla_{\a_0^\delta }\eta) }_0+\ns{\[Z^m,b_0^\delta \]\cdot\nabla_{\a_0^\delta }\eta}_0+\ns{b_0^\delta \cdot \mathcal{C}_0^{m,\delta}(\eta)}_0\nonumber
\\& \le\me\( \Mm+\eps\Mmd  +  \as{Z^m\eta }_0 +     t^{1/2}\fg{m}^\eps(t) \) .
\end{align}
Now, since $((\a_0^\delta )^Tb_0^\delta )_3=(J_0^\delta )^{-1} b_0^\delta \cdot \n_0^\delta  \neq 0$ near $\Sigma$, it is crucial to apply Proposition \ref{tra_es} with $\bar B=(\a_0^\delta )^Tb_0^\delta $ and $f=\Xi^m $ to have
\beq\label{kes4}
  \abs{\Xi^m }_0^2 \ls \norm{(\a_0^\delta )^Tb_0^\delta  \cdot \nabla \Xi^m }_0\norm{\Xi^m }_0+  \ns{\Xi^m }_0 .
\eeq
Then by  \eqref{kes1}, \eqref{kes4}, \eqref{kes3}, \eqref{gesa1} and using Cauchy's inequality,  one has that for any $\epsilon>0,$
\begin{align}
  \abs{Z^m \eta}_0^2 &\le  \abs{\Xi^m}_0^2 +\abs{Z^m \eta_0^\delta \cdot\nabla_{\a_0^\delta } \eta}_0^2
  \nonumber
\\&\ls  \norm{(\a_0^\delta )^Tb_0^\delta  \cdot \nabla \Xi^m }_0\norm{\Xi^m }_0+  \ns{\Xi^m }_0+\me \as{Z^m\eta_0^\delta }_0  \nonumber
\\& \ls\sqrt{ \me\(\Mm+\eps\Mmd  +  \as{Z^m\eta }_0 +     t^{1/2}\fg{m}^\eps(t) \) }\norm{\Xi^m }_0+  \ns{\Xi^m }_0+\me \Mm
\nonumber
\\& \le \epsilon \as{Z^m\eta }_0+C_\epsilon \me\(  \Mm+\eps\Mmd  +  \ns{\Xi^m }_0+     t^{1/2}\fg{m}^\eps  \)
\nonumber
\\& \le \epsilon \as{Z^m\eta }_0+C_\epsilon \me\( \Mm+\eps\Mmd  +     t^{1/2}\fg{m}^\eps  \),
\end{align}
which  implies, by taking $\epsilon>0$ sufficiently small,
\begin{align}\label{sbt23}
  \abs{Z^m \eta}_0^2       \le\me\( \Mm+\eps\Mmd  +       t^{1/2}\fg{m}^\eps  \) .
\end{align}
Therefore, by   \eqref{sbt23}, one obtains from \eqref{es11d21} that
\begin{align} \label{es11d23}
 &\ns{(Z^m p,Z^m v,Z^m b)(t) }_0+\eps\int_0^t \norm{ Z^m v}_1^2   \le\me(t)\( \Mm+\eps\Mmd  +     t^{1/2}\fg{m}^\eps(t) \)   .
\end{align}
This gives \eqref{spaes} by recalling \eqref{etaest}.
\end{proof}

Now we consider the estimates of the rest of highest order tangential   derivatives. Let $\al\in \mathbb{N}^{1+3}$ be with $|\al|= m$ and  $\al_0\ge 1$, and one applies  $\bar Z^\al$ to \eqref{MHDv0s} to find
\begin{equation}\label{MHDveer}
\begin{cases}
\frac{1}{\gamma p} \dt \bar Z^\al q-\frac{1}{\gamma p}     b \cdot \dt \bar Z^\al  b + \diva \bar Z^\al v = F^{1,\al}  &\text{in  }\Omega\\
\rho\partial_t \bar Z^\al v  +\naba \bar Z^\al  q-b\cdot\naba \bar Z^\al b-\eps \da \bar Z^\al v= F^{2,\al} + \bar Z^\al\Psi^\epsd   &\text{in } \Omega\\
\partial_t\bar Z^\al b-\frac{b}{\gamma p}   \dt \bar Z^\al q+\frac{b}{\gamma p}    b\cdot \dt \bar Z^\al b-b\cdot\naba \bar Z^\al v=F^{3,\al} &\text{in } \Omega ,
 \end{cases}
\end{equation}
where
\beq
  F^{1,\al} = -\[\bar Z^\al,\frac{1}{\gamma p} \]\dt  q+ \[\bar Z^\al,\frac{1}{\gamma p}     b\] \cdot \dt    b -\[ \bar Z^\al ,\diva\] v,
\eeq
\beq
  F^{2,\al} =  -\[ \bar Z^\al ,\rho\]\partial_t  v -\[ \bar Z^\al ,\naba\] q+\[\bar Z^\al,b\cdot\naba\]b+\varepsilon\[\bar Z^\al,\da\] v
\eeq
and
\beq
  F^{3,\al} =  \[ \bar Z^\al ,\frac{b}{\gamma p} \]\partial_t  q -\[ \bar Z^\al ,\frac{b}{\gamma p} b\] \cdot\dt b+\[\bar Z^\al,b\cdot\naba\]v.
\eeq

 \begin{lem}
 It holds that
 \begin{align} \label{qesa0}
\ns{F^{1,\al}}_0 + \ns{F^{2,\al}}_0 + \ns{F^{3,\al}}_0      \le \me(  \mathfrak{E}_{m}^{\varepsilon } + \mathfrak{D}_{m}^{\varepsilon } ).
\end{align}
\end{lem}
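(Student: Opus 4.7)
The plan is to bound each of the three commutators defining $F^{1,\alpha}$, $F^{2,\alpha}$, $F^{3,\alpha}$ by writing out the Leibniz expansion and invoking standard tame commutator estimates adapted to the mixed time--tangential derivatives $\bar Z^\alpha$. The key structural observation I would exploit is that $|\alpha|=m$ together with $\alpha_0\ge 1$ forces, in every term of each expanded commutator, at least one $\bar Z$ to fall on a coefficient; hence the derivatives remaining on the principal factor total at most $m-1$ mixed time--tangential derivatives, after which either the commuted spatial derivative counts stay inside $\mathfrak{E}_m^{\varepsilon}$ or (when they rise by one or two) they are compensated by the powers of $\varepsilon$ built into $\mathfrak{D}_m^{\varepsilon}$.

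For $F^{1,\alpha}$ and $F^{3,\alpha}$, the coefficients $1/(\gamma p)$, $b/(\gamma p)$, $(b/(\gamma p))b$, and $\mathcal A$ (entering $\diva$) are smooth nonlinear functions of the state $(\eta,p,b)$ (with $J_0^\delta, p_0^\delta,\rho_0^\delta$ frozen). I would apply the Moser-type estimate
\[
\norm{[\bar Z^\alpha,f]g}_0\lesssim \norm{\nabla_{t,\mathrm{tan}}f}_{L^\infty}\norm{g}_{\mathcal{F}_{m-1}}+\norm{g}_{L^\infty}\norm{\nabla_{t,\mathrm{tan}}f}_{\mathcal{F}_{m-1}},
\]
where $\norm{\cdot}_{\mathcal{F}_{m-1}}$ denotes the mixed norm counted by $\bar Z^{\alpha'}$ with $|\alpha'|\le m-1$. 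Since $m\ge 4$, Sobolev embedding places the low-order factor in $L^\infty$, and $\norm{f}_{\mathcal{F}_m}$ is controlled by a polynomial in $\norm{\eta}_m,\norm{p}_m,\norm{b}_m$, hence by $\sqrt{\me}$ via Proposition \ref{tres1}. The $g$'s occurring are $\dt q,\dt b,\pa v,\pa q,\pa b$, whose mixed $(m-1)$-norms are bounded directly by $(\mathfrak{E}_m^{\varepsilon}+\mathfrak{D}_m^{\varepsilon})^{1/2}$ from the definitions \eqref{edef} and \eqref{ddef}. Squaring gives the desired bound.

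For $F^{2,\alpha}$ the new ingredient is the viscous commutator $\varepsilon[\bar Z^\alpha,\Delta_{\mathcal A}]v$. Expanding $\Delta_{\mathcal A}v=\mathcal A_{ij}\pa_j(\mathcal A_{ik}\pa_k v)$ by Leibniz and commuting $\bar Z$ with $\pa$ (which produces only smooth-coefficient lower-order remainders, since $[Z_3,\pa_3]=-(3x_3^2-1)\pa_3$), the representative worst term is
\[
\varepsilon\,\bar Z^{\alpha^1}\mathcal A\cdot \pa\bar Z^{\alpha^2}\mathcal A\cdot \pa\bar Z^{\alpha^3}v,\qquad |\alpha^1|+|\alpha^2|+|\alpha^3|=m,\ |\alpha^1|+|\alpha^2|\ge 1.
\]
In the extremal case $|\alpha^3|=m-1$, the hypothesis $\alpha_0\ge 1$ ensures $\bar Z^{\alpha^3}v=\dt^{j}Z^{\gamma}v$ with $j+|\gamma|=m-1$ and $j\le m-1$; then $\varepsilon\norm{\pa^2\bar Z^{\alpha^3}v}_0\lesssim \varepsilon\norm{\dt^j v}_{m-j+1}$, whose square is absorbed exactly by the $\varepsilon^2\sum_{j\le m-1}\norm{\dt^j v}_{m-j+1}^2$ piece of $\mathfrak{D}_m^{\varepsilon}$. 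Intermediate distributions ($|\alpha^3|\le m-2$) produce at most one extra spatial derivative on $v$, which matches the $\sum_{j\le m-2}\norm{\dt^j v}_{m-j}^2$ part of $\mathfrak{D}_m^{\varepsilon}$ without needing the $\varepsilon^2$ weight; Sobolev embedding ($m\ge 4$) places all other factors in $L^\infty$ with norm controlled by $\me$.

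The main obstacle will be the bookkeeping for the viscous commutator: verifying that, thanks to $\alpha_0\ge 1$ and the precise weighting in $\mathfrak{D}_m^{\varepsilon}$, the two spatial derivatives landing on the top-order tangential--temporal derivative of $v$ genuinely fit in the $\varepsilon^2\norm{\dt^j v}_{m-j+1}^2$ slot (which requires $j\le m-1$). Once that accounting is done, the remaining pieces---including commutators with $\rho$, $\nabla_{\mathcal A}$, $b\cdot\nabla_{\mathcal A}$, and the application $\dt\eta=v$ to trade time derivatives of $\eta$ for derivatives of $v$---are all routine tame product estimates yielding the combined bound $\me(\mathfrak{E}_m^{\varepsilon}+\mathfrak{D}_m^{\varepsilon})$.
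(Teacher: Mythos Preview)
Your proposal is correct and follows essentially the same approach as the paper: the paper's own proof is the one-liner ``\eqref{qesa0} follows similarly as for \eqref{qesm0},'' and your detailed expansion via tame commutator estimates, Sobolev embedding ($m\ge 4$), the trade $\dt\eta=v$, and the $\varepsilon^2$-weighted slot of $\mathfrak{D}_m^\varepsilon$ for the viscous commutator is exactly how that earlier estimate is carried out. One small remark: your emphasis on $\alpha_0\ge 1$ in the viscous term is slightly misplaced (the bound $j\le m-1$ there is automatic from $j+|\gamma|=m-1$); the genuine role of $\alpha_0\ge 1$ is in the \emph{non}-viscous commutators $[\bar Z^\alpha,\nabla_{\mathcal A}]q$, $[\bar Z^\alpha,\diva]v$, etc., where it converts $\bar Z^\alpha\nabla\eta$ into $\nabla\bar Z^{\alpha-e_0}v$, landing in the $\sum_{j\le m-1}\norm{\dt^j v}_{1,m-j-1}^2$ piece of $\mathfrak{E}_m^\varepsilon$ without any $\varepsilon$ weight---but you already invoke $\dt\eta=v$ for this purpose, so the argument closes.
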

\begin{proof}
\eqref{qesa0} follows similarly as for \eqref{qesm0}.
\end{proof}

Now we estimate such derivatives.

\begin{prop}\label{tanest1}
For $t\in [0,T_0^\epsd]$ with $T_0^\epsd\le 1$, it holds that
\begin{align} \label{tanest1es}
&\sum_{j=1 }^{m} \norm{(\dt^j  p,\dt^j  v,\dt^j  b)(t)}_{0,m-j }^2
+ \eps\int_0^t\sum_{j=1 }^{m}\norm{\dt^j   v}_{1,m-j }^2
\nonumber\\&\quad \le \me(t)\(  \Mm+\eps \Mmd  +     t^{1/2}\fg{m}^\eps(t) \) .
\end{align}
\end{prop}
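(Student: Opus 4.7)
The plan is to apply $\bar Z^\al$ with $|\al|=m$ and $\al_0 \ge 1$ to the system \eqref{MHDv0s}, arriving at \eqref{MHDveer}, and then perform a direct $L^2$-type energy estimate. In sharp contrast to Proposition \ref{tane}, where purely spatial tangential derivatives $Z^m$ forced the introduction of Alinhac-type good unknowns to avoid a loss of derivatives and required Cauchy's integral \eqref{re00} to tame a troublesome boundary integral of the form $\int_\Sigma b_0^\delta\cdot\n_0^\delta\cdots$, here the presence of at least one time derivative in $\bar Z^\al$ yields a dramatic simplification: the commutator remainders are already absorbed into $F^{1,\al},F^{2,\al},F^{3,\al}$ and estimated by \eqref{qesa0}, and $\bar Z^\al$ is tangential to both $\Sigma$ and $\Sigma_\pm$ so that every boundary contribution arising from integration by parts vanishes identically.

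Concretely, I would take the $L^2(\Omega)$ inner product of the three equations in \eqref{MHDveer} against $J\bar Z^\al q$, $J\bar Z^\al v$ and $J\bar Z^\al b$ respectively and sum them. Using $\dt(\rho J)=0$ and the identities already exploited in \eqref{ttt1}, integration by parts on $\Omega_\pm$ produces a pressure boundary term $\int_\Sigma \jump{\bar Z^\al q}\bar Z^\al v\cdot\n$ which vanishes because $\jump{q}=0$ is preserved by every tangential operator; a magnetic boundary term $-\int_\Sigma b\cdot\n\,\jump{\bar Z^\al b}\cdot \bar Z^\al v$ which vanishes because $\jump{b}=0$; a viscous boundary term $\eps\int_\Sigma \jump{\naba\bar Z^\al v}\n\cdot \bar Z^\al v$ on $\Sigma$, which vanishes thanks to $\jump{\p_3v}=0$ together with the iterative identity $\jump{\p_3 Z_3^{\al_3} v}=(-1)^{\al_3}\jump{\p_3 v}=0$ following from $[\p_3,Z_3]=(3x_3^2-1)\p_3$ and $Z_3|_\Sigma = 0$; and a boundary contribution on $\Sigma_\pm$, which vanishes since $v=0$ there. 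What survives is the clean identity
\begin{align*}
& \hal\dtt \int_\Omega J \(\frac{1}{\gamma p}\as{\bar Z^\al q - b\cdot\bar Z^\al b} + \rho \as{\bar Z^\al v} + \as{\bar Z^\al b}\) + \eps\int_\Omega J\as{\naba\bar Z^\al v} \\
&\qquad = \text{interior remainders}.
\end{align*}

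The interior remainders consist of the $F^{i,\al}$ contributions controlled by \eqref{qesa0}, the corrector term $\bar Z^\al\Psi^\epsd$ controlled by \eqref{psies}, and coefficient-time-derivative terms handled exactly as in \eqref{baba1}; all are dominated by $\me(\sqrt{\fe{m}^\eps+\fd{m}^\eps}\sqrt{\fe{m}^\eps}+\eps\Mmd\sqrt{\fe{m}^\eps})$. Integrating in time, invoking \eqref{gesa122} for the initial data, applying Cauchy--Schwarz to bound $\int_0^t \sqrt{\fd{m}^\eps\fe{m}^\eps}$ by $t^{1/2}\sqrt{\sup_{[0,t]}\fe{m}^\eps}\,\sqrt{\int_0^t \fd{m}^\eps}$, and combining with Poincar\'e's inequality and \eqref{nablaes} to promote $\eps\int_0^t \int_\Omega J\as{\naba\bar Z^\al v}$ to an $H^1$-control of $\bar Z^\al v$, yields the per-$\al$ estimate $\ns{\bar Z^\al(p,v,b)(t)}_0 + \eps\int_0^t\ns{\bar Z^\al v}_1 \le \me(t)(\Mm+\eps\Mmd+t^{1/2}\fg{m}^\eps(t))$. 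Summing over all $\al$ with $|\al|=m$ and $\al_0\ge 1$ and regrouping via the anisotropic norm \eqref{ani_sob_norm} then delivers \eqref{tanest1es}. The only bookkeeping subtlety worth flagging is the inductive verification that $\jump{\p_3 Z_3^{\al_3}v}=0$ for $\al_3 \ge 1$; beyond that, the argument is essentially Proposition \ref{tane}'s scheme stripped of its genuine difficulties.
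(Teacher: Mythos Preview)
Your proposal is correct and follows essentially the same route as the paper's proof. The only minor omission is that integrating $\int_\Omega Jb\cdot\naba(\bar Z^\al b\cdot\bar Z^\al v)$ by parts also produces a bulk term $-\int_\Omega J\diva b\,\bar Z^\al b\cdot\bar Z^\al v$ (since $\diva b$ need not vanish for the smoothed data, cf.\ \eqref{dividen22}), which the paper records in \eqref{ttt1000ga} and bounds trivially by $\me\fe{m}^\eps$.
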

\begin{proof}
Let $\al\in \mathbb{N}^{1+3}$ be with $|\al|= m$ and  $\al_0\ge 1$. Taking the $L^2(\Omega)$ inner product of the equations in \eqref{MHDveer} for such $\al$ with $J \bar Z^\al q,  J \bar Z^\al v$ and $J \bar Z^\al b$, respectively, similarly as for \eqref{ttt1},
one obtains
\begin{align}\label{ttt123}
& \dfrac{1}{2}\dfrac{d}{dt}\int_{\Omega} J(\frac{1}{\gamma p} \abs{\bar Z^\al q -b\cdot \bar Z^\al b}^2 +\rho \abs{\bar Z^\al v }^2 + \abs{\bar Z^\al b}^2 )
 \nonumber
 \\&\quad  +\int_{\Omega}J \diva ( \bar Z^\al q \cdot \bar Z^\al v)- \int_\Omega Jb\cdot\naba (\bar Z^\al b \cdot \bar Z^\al v)-\eps \int_{\Omega}J  \da \bar Z^\al v\cdot \bar Z^\al v
 \nonumber
\\&\quad = \int_{\Omega} \!\(\!\hal\dt( \frac{J}{\gamma p}) \!\Big( \!\abs{\bar Z^\al q}^2+\abs{b\cdot \bar Z^\al b}^2\!\Big)\!+  \frac{J}{\gamma p}  \dt b\cdot  \bar Z^\al b b\cdot \bar Z^\al b +\hal\dt  J  \abs{     \bar Z^\al b}^2-\dt(  \frac{Jb}{\gamma p} ) \cdot \bar Z^\al b \bar Z^\al q \!\)
 \nonumber
\\&\qquad  +\int_\Omega J (  F^{1,\al}\cdot \bar Z^\al q +   F^{2,\al}\cdot \bar Z^\al v +  F^{3,\al}\cdot \bar Z^\al b)+\int_\Omega \bar Z^\al \Psi^\epsd   \cdot \bar Z^\al v.
\end{align}
Integrating by parts over $\Omega_\pm$ and using the boundary conditions in \eqref{MHDv0sb}, one gets
\begin{align}\label{ttt1000ga}
&-\int_{\Omega}J \diva ( \bar Z^\al q \cdot \bar Z^\al v)+ \int_\Omega Jb\cdot\naba (\bar Z^\al b \cdot \bar Z^\al v)
\nonumber\\&\quad= -\int_\Omega J\diva b \bar Z^\al b \cdot \bar Z^\al v \le \me \norm{\bar Z^\al b}_0\norm{\bar Z^\al v}_0 \le \me \fe{m}^\eps
\end{align}
and
\beq
 -\eps \int_{\Omega}J  \da \bar Z^\al  v\cdot \bar Z^\al  v
  = \eps \int_{\Omega}J  \abs{\nabla_\a \bar Z^\al  v}^2  .
\eeq
One has directly
\begin{align}
 & \int_{\Omega} \(\hal\dt( \frac{J}{\gamma p}) \Big( \abs{\bar Z^\al q}^2+\abs{b\cdot \bar Z^\al b}^2\Big)+ \frac{J}{\gamma p}  \dt b\cdot  \bar Z^\al b b\cdot \bar Z^\al b +\hal\dt  J  \abs{     \bar Z^\al b}^2-\dt( \frac{Jb}{\gamma p} ) \cdot \bar Z^\al b \bar Z^\al q \)\nonumber
\\&\quad \le  \me \(\ns{\bar Z^\al q}_0+\ns{\bar Z^\al b}_0+\norm{\bar Z^\al q}_0\norm{\bar Z^\al b}_0\) \le \me  \mathfrak{E}_{m}^{\varepsilon }  .
\end{align}
By \eqref{qesa0} and  \eqref{psies}, one has directly
\begin{align}\label{gg2a}
&\int_\Omega J (  F^{1,\al}\cdot \bar Z^\al q +   F^{2,\al}\cdot \bar Z^\al v +\bar Z^\al\Psi^\epsd   \cdot \bar Z^\al v+  F^{3,\al}\cdot \bar Z^\al b)\nonumber
\\&\quad\le  \me \(\norm{F^{1,\al}}_0\norm{\bar Z^\al q }_0+\norm{F^{2,\al}}_0\norm{\bar Z^\al v}_0+\norm{\bar Z^\al\Psi^\epsd }_0\norm{\bar Z^\al v}_0+\norm{F^{3,\al}}_0\norm{\bar Z^\al b}_0\)
\nonumber
\\&\quad\le   \me \( \sqrt{ \mathfrak{E}_{m}^{\varepsilon }+ \mathfrak{D}_{m}^{\varepsilon } }\sqrt{\mathfrak{E}_{m}^{\varepsilon }}+\eps \Mmd\sqrt{\mathfrak{E}_{m}^{\varepsilon }}\) .
\end{align}

As a consequence of the estimates \eqref{ttt1000ga}--\eqref{gg2a}, one deduces from \eqref{ttt123} that
\begin{align} \label{gdes111223}
&  \dfrac{1}{2}\dfrac{d}{dt}\int_{\Omega} J(\frac{1}{\gamma p} \abs{\bar Z^\al q -b\cdot \bar Z^\al b}^2 +\rho \abs{\bar Z^\al v }^2 + \abs{\bar Z^\al b}^2 )+\eps  \int_{\Omega}J \abs{ \nabla_\a \bar Z^\al v}^2 \nonumber
 \\&\quad\le\me \(\sqrt{ \mathfrak{E}_{m}^{\varepsilon }+ \mathfrak{D}_{m}^{\varepsilon } }\sqrt{\mathfrak{E}_{m}^{\varepsilon }}+\eps \Mmd\sqrt{\mathfrak{E}_{m}^{\varepsilon }}\).
\end{align}
Integrating \eqref{gdes111223} directly in time, by \eqref{nablaes}, \eqref{psies} and using Poincar\'e's inequality and H\"older's inequality, one obtains
\begin{align} \label{qqqw}
    \ns{(\bar Z^\al p,\bar Z^\al v ,\bar Z^\al b)(t)}_0  +\eps\int_0^t   \norm{   \bar Z^\al v}_1^2 &\le  \Mm+\eps\Mmd  + \me(t)\int_0^t \sqrt{ \mathfrak{E}_{m}^{\varepsilon }+ \mathfrak{D}_{m}^{\varepsilon } }\sqrt{\mathfrak{E}_{m}^{\varepsilon }}
 \nonumber
\\&\le  \Mm+\eps\Mmd  + \me(t)  t^{1/2}\fg{m}^\eps(t) .
\end{align}
This gives \eqref{tanest1es} by summing over such $\al$  and recalling \eqref{etaest}.
\end{proof}

\section{Normal derivatives estimates} \label{lasts4}

In this section, we will estimate the normal derivatives of $(p,v,b)$. In light of the tangential estimates in Section \ref{lasts5}, it suffices to derive the estimates near the boundary $\Sigma\cup\Sigma_\pm$, and thus the analysis in the following will be   carried out mostly near $\Sigma\cup\Sigma_\pm$, without mentioning explicitly. The fact that $(\a ^Tb )_3=J^{-1}b_0^\delta \cdot\n_0^\delta  \neq0$ in such region will be used crucially.

\begin{prop}\label{tane23}
For $t\in [0,T_0^\epsd]$ with $T_0^\epsd\le 1$, it holds that
\begin{align} \label{total1es41}
  \fe{m}^{\varepsilon}(t)+\int_0^t \fdb{m}^{\varepsilon}
 \le  \me(t)  \( \Mm+\eps\Mmd  + t^{1/2}\fg{m}^\eps(t)    \).
\end{align}
\end{prop}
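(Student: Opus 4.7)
The plan is to combine the tangential-derivative estimates of Section \ref{lasts5} with direct expressions for $\p_3 v$ and $\p_3 b\cdot \n$ near the boundary, exploiting the transversality of the magnetic field. First, Propositions \ref{tane} and \ref{tanest1} already bound, in exactly the RHS form of \eqref{total1es41}, the fully tangential sums $\sum_{j=0}^{m} \ns{(\dt^j p, \dt^j v, \dt^j b)(t)}_{0,m-j}$ and the boundary regularity $\as{\eta(t)}_m$, as well as the entire dissipation $\int_0^t \fdb{m}^\eps = \eps\int_0^t\sum_{j=0}^{m}\ns{\dt^j v}_{1,m-j}$, while Proposition \ref{tres1} handles the remaining $\eta$-pieces $\ns{\eta}_m + \eps\ns{\eta}_{1,m} + \eps^2\ns{\eta}_{m+1}$. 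Hence only the two normal-derivative sums $\sum_{j=0}^{m-1} \ns{\dt^j v}_{1,m-j-1}$ and $\sum_{j=0}^{m-1} \ns{\dt^j \p_3 b\cdot\n}_{0,m-j-1}$ remain, and since $\p_1, \p_2$ are absorbed into the tangential sums, only the $\p_3$-components need treatment.

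I next exploit the transversality guaranteed by \eqref{zero_123}: $(\a^T b)_3 = J^{-1} b\cdot\n$ is uniformly nonzero on $\Sigma \cup \Sigma_\pm$ and, by continuity together with $T_0^\epsd \le 1$, in a uniform boundary strip (the interior being handled by \eqref{etaest} via a standard cut-off partition). In this strip I rearrange the fourth equation of \eqref{MHDve} as
\[
\p_3 v = \bigl((\a^T b)_3\bigr)^{-1}\bigl(\dt b + b\,\divaed v - (\a^T b)_\beta \p_\beta v\bigr),
\]
and use \eqref{dividen22} to obtain
\[
J^{-1} \p_3 b\cdot\n = \a_{i3}\p_3 b_i = J^{-1} J_0^\delta \diverge_{\a_0^\delta} b_0^\delta - \a_{i\beta}\p_\beta b_i.
\]
Both right-hand sides depend only on $\dt b$, tangential spatial derivatives of $(v,b)$, and fixed smoothed data, with coefficients that are continuous functions of $\nabla\eta, p, v, b$.

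I then apply $\dt^j Z^\alpha$ for $|\alpha|\le m-j-1$ and take $L^2$-norms, treating coefficient commutators (with $((\a^T b)_3)^{-1}$, $\a$, $J$) via estimates analogous to \eqref{comest}--\eqref{qesm0}. This yields schematically
\begin{align*}
\norm{\dt^j\p_3 v}_{0,m-j-1} &\le \me^{1/2}\bigl(\norm{\dt^{j+1}b}_{0,m-j-1} + \norm{\dt^j v}_{0,m-j} + \norm{\dt^j b}_{0,m-j}\bigr) + \me^{1/2}\sqrt{\set{m}^\varepsilon},\\
\norm{\dt^j\p_3 b\cdot\n}_{0,m-j-1} &\le \me^{1/2}\norm{\dt^j b}_{0,m-j} + \me^{1/2}\sqrt{\set{m}^\varepsilon} + \sqrt{\Mm}.
\end{align*}
The re-indexing $j+1\mapsto j'$ identifies $\norm{\dt^{j+1}b}_{0,m-j-1}$ with $\norm{\dt^{j'}b}_{0,m-j'}$ for $j' = 1,\dots,m$, which belongs to the first (tangential) term of $\fe{m}^\eps$ already controlled by Propositions \ref{tane} and \ref{tanest1}. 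Summing over $j$, invoking \eqref{etaest} to control $\set{m}^\varepsilon$ by $\Mm + \eps\Mmd + t\fg{m}^\eps(t)$, and combining with the reductions above yields \eqref{total1es41}.

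The main obstacle to this approach is to avoid the viscous term $\eps\daed v$: eliminating $\dt v$ via the momentum equation of \eqref{MHDve} would introduce $\eps$-weighted second normal derivatives that are not present in $\fe{m}^\eps$. I sidestep this by inverting only the (inviscid) fourth equation to obtain $\p_3 v$, keeping $\dt b$ as the ``inverted'' time-derivative so that the identities above remain viscosity-free. Any residual $\eps$-correction stemming from differentiating the coefficients can be absorbed either into the $\eps\fg{m}^\eps$ slack (the $\sum_\eps$ notation of Section \ref{sec2.3}) or into the already-bounded $\int_0^t \fdb{m}^\eps$. The more delicate ODE-in-time structures \eqref{ode1}--\eqref{ode2}, with the crucial cross-term cancellation between $\p_3 b\cdot\tau^\beta$ and $\p_3 p$, are unnecessary for $\fe{m}^\eps$ at this level but become essential for controlling the higher-order dissipation $\fd{m}^\eps$ treated elsewhere.
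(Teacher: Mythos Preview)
Your approach is essentially the one in the paper: reduce to Propositions \ref{tres1}, \ref{tane}, \ref{tanest1} for all but the two normal-derivative sums in $\fe{m}^\eps$, then express $\p_3 b\cdot\n$ via \eqref{dividen22} and $\p_3 v$ via the fourth equation of \eqref{MHDve} combined with the transversality $(\a^Tb)_3\neq 0$ near $\Sigma\cup\Sigma_\pm$. There is, however, one genuine slip in your $\p_3 v$ identity that makes the argument circular as written.

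You write
\[
\p_3 v = \bigl((\a^T b)_3\bigr)^{-1}\bigl(\dt b + b\,\divaed v - (\a^T b)_\beta \p_\beta v\bigr)
\]
and then assert that the right-hand side depends only on $\dt b$ and \emph{tangential} spatial derivatives of $(v,b)$. But $\divaed v=\a_{ij}\p_j v_i$ contains the term $\a_{i3}\p_3 v_i$, so $b\,\divaed v$ reintroduces $\p_3 v$ on the right. Since the linear map $\xi\mapsto (\a^Tb)_3\,\xi-(\a_{i3}\xi_i)\,b$ annihilates $\xi=b$, you cannot simply absorb this term; and the anisotropic norm $\norm{\dt^j v}_{0,m-j}$ (built from $Z^\alpha$, with $Z_3$ vanishing on the boundary) does not control $\norm{\dt^j\p_3 v}_{0,m-j-1}$ near $\Sigma\cup\Sigma_\pm$. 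The fix is immediate and is exactly what the paper does: use the second equation of \eqref{MHDve} to substitute $\divaed v=-\frac{1}{\gamma p}\dt p$, giving
\[
\p_3 v=\frac{1}{(\a^Tb)_3}\Bigl(\dt b-\frac{b}{\gamma p}\dt p-(\a^Tb)_\beta\p_\beta v\Bigr),
\]
cf.\ \eqref{tbb}--\eqref{nneq1}. Your schematic bound should then read
\[
\norm{\dt^j\p_3 v}_{0,m-j-1}\le \me^{1/2}\bigl(\norm{(\dt^{j+1}p,\dt^{j+1}b)}_{0,m-j-1}+\norm{\dt^j v}_{0,m-j}\bigr)+\me^{1/2}\sqrt{\set{m}^\eps},
\]
with the extra $\dt^{j+1}p$ term, which is again absorbed into the tangential sums of Propositions \ref{tane} and \ref{tanest1}. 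With this correction your proof coincides with the paper's.
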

\begin{proof}
 First, \eqref{dividen22} implies
\beq\label{dee}
\p_3 b\cdot \n= -J\a_{i\beta}\p_\beta b_i+J_0^\delta\diverge_{\a_0^\delta} b_0^\delta.
\eeq
Then by \eqref{fadl1} and \eqref{gesa1}, one obtains that for $j=0,\dots,m-1$,
\begin{align}\label{vve1}
\norm{  \dt^{j} \p_3 b\cdot \n}_{0,m-j -1}^2 \le\me \(\mathfrak{F}_{m}^{\varepsilon} +\norm{  \dt^{j} b}_{0,m-j }^2\)+\Mm.
\end{align}

Next, the second equation and fourth equation in \eqref{MHDve} imply
 \beq\label{tbb}
b\cdot \naba    v=\dt b - \frac{b}{\gamma p}\dt  p.
\eeq
Thus,  since $(\a ^Tb )_3 \neq0$ near $\Sigma\cup\Sigma_\pm$,  one obtains
\beq \label{nneq1}
\p_3 v=\frac{1}{(\a^Tb)_3}(b\cdot \naba    v-(\a^Tb)_\beta\p_\beta v)=\frac{1}{(\a^Tb)_3}\(\partial_tb-  \frac{b}{\gamma p}\dt  p-(\a^Tb)_\beta\p_\beta v\).
\eeq
One then deduces that for $j=0,\dots,m-1$,
\begin{align}\label{vve12}
\norm{ \p_3 \dt^{j} v}_{0,m-j -1}^2 \le\me \(\mathfrak{F}_{m}^{\varepsilon} +\norm{  ( \dt^{j+1} p,   \dt^{j+1} b )}_{0,m-j -1}^2 +\norm{  \dt^{j} v}_{0,m-j }^2\).
\end{align}
This, together with \eqref{vve1}, Propositions \ref{tres1}, \ref{tane} and \ref{tanest1}, yields \eqref{total1es41}.
\end{proof}

Next,  in terms of the orthogonal base:
\beq\label{base}
\left\{\tau^1 =\frac{\p_1\eta}{|\p_1\eta|},\tau^2 =\frac{\tilde \tau}{|\tilde \tau|}, n=\frac{\n}{|\n|} \right\}\text{ with }\tilde \tau=\p_2\eta-\frac{\p_1\eta\cdot\p_2\eta}{|\p_1\eta|^2}\p_1\eta,
\eeq
one has the following expressions for $\p_3b\cdot\tau^\beta,\ \beta=1,2,$ and $\p_3 p$.
\begin{lem}
Let $\al\in \mathbb{N}^{1+3}$ with $\abs{\al}\le m-1$.
It holds that  near $\Sigma\cup\Sigma_\pm$,
\begin{align}\label{teqq}
  \p_3 \p^\al b\cdot\tau^\beta  +\eps \frac{ \abs{\n}^2}{(J\a^Tb)_3^2}   \p_3 \p^\al\( b\cdot\naba v\)\cdot\tau^\beta   =  G _{\tau^\beta }^\al,\ \beta=1,2
\end{align}
and
\beq\label{teqq2}
   \p_3 \p^\al p+ \p_3 \p^\al b\cdot\tau^\beta   b\cdot \tau^\beta   -\varepsilon \p_3 \p^\al \diva v = G _n^\al.
\eeq
Here  $ G _{\tau^\beta }^\al $ and $G _n^\al$ are defined by  \eqref{eeqq2} and \eqref{eeqq3},  respectively, which satisfy
\begin{align}\label{teqqes2}
\ns{G _{\tau^\beta }^\al}_0+\ns{G _n^\al}_0   &\le \me\Big( \ns{\p^\al\Psi^\epsd}_0+\mathfrak{F}_{m}^{\varepsilon}+\ns{(\p^\al p,\p^\al b)}_{0,1}+\ns{\p^\al \dt v}_{0}
\nonumber\\&\qquad\quad+\eps^2(\ns{\p^\al v}_{1,1}+\fe{m}^\eps+\fd{m}^\eps)\Big).
\end{align}
\end{lem}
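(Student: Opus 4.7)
The identities \eqref{teqq} and \eqref{teqq2} come from applying $\partial^\alpha$ (a purely tangential/temporal operator) to the momentum equation
\begin{equation*}
\rho\dt v+\naba q-b\cdot\naba b-\eps\da v=\Psi^\epsd,\qquad q=p+\tfrac12|b|^2,
\end{equation*}
and then projecting the resulting vector identity onto the orthonormal frame $\{\tau^1,\tau^2,n\}$ defined in \eqref{base}. The key algebraic facts I will use repeatedly are: (i) $\naba f\cdot \p_\beta\eta=\p_\beta f$, so $\naba f\cdot\tau^\beta$ is purely tangential; (ii) $\naba f\cdot n=(|\n|/J)\p_3 f+(\text{tangential})$, because $(\a\a^T)_{33}=|\n|^2/J^2$; (iii) $b\cdot\naba=(\a^T b)_\gamma\p_\gamma+(\a^Tb)_3\p_3$ with $(\a^Tb)_3=(b\cdot\n)/J$; and (iv) the elementary decomposition $\da v=\naba(\diva v)+\mathfrak{R}$, where $\mathfrak{R}$ contains only first-order coefficients of $\nabla v$ and second-order cross terms that, after one $\p_3$ substitution via \eqref{nneq1}, are absorbable.

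\emph{Tangential projection (proof of \eqref{teqq}).} I apply $\p^\alpha$ to the momentum equation, dot with $\tau^\beta$, and move the single non-tangential piece $(\a^Tb)_3\,\p_3\p^\alpha b\cdot\tau^\beta$ to the left-hand side. All remaining inviscid terms ($\rho\dt\p^\alpha v$, $\naba\p^\alpha q\cdot\tau^\beta$, $(\a^Tb)_\gamma\p_\gamma\p^\alpha b\cdot\tau^\beta$, $\p^\alpha\Psi^\epsd$, and commutators) involve only tangential spatial derivatives, time derivatives, or commutator contributions bounded by $\me\,(\mathfrak{F}_m^\eps+\ns{(\p^\alpha p,\p^\alpha b)}_{0,1}+\ns{\p^\alpha\dt v}_0)$, which is the first group of terms in \eqref{teqqes2}. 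For the viscous piece $\eps\,\da\p^\alpha v\cdot\tau^\beta$, I isolate the top normal contribution $\eps\,\a_{i3}\a_{i3}\p_3\p_3\p^\alpha v_i=\eps(|\n|^2/J^2)\p_3\p_3\p^\alpha v\cdot\tau^\beta+\mathrm{l.o.t.}$; substituting the identity $\p_3 v=(1/(\a^Tb)_3)(b\cdot\naba v-(\a^Tb)_\gamma\p_\gamma v)$ from \eqref{nneq1} on the inner $\p_3$ produces $\eps(|\n|^2/J^2)\cdot(J/(b\cdot\n))\,\p_3(b\cdot\naba v)\cdot\tau^\beta$ to leading order; dividing the whole identity by $-(\a^Tb)_3=-(b\cdot\n)/J$ yields the coefficient $\eps|\n|^2/(J\a^Tb)_3^2$ as in \eqref{teqq}. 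The discarded $\eps$-terms (from $\mathfrak{R}$, from commutators with $\a$, $\eta$ and with $\p^\alpha$, and from the $\p_3$ hitting the coefficient $1/(\a^Tb)_3$) are all first-order in $\nabla v$ or involve $\p^\alpha v$ with at most one $\p_3$ replaced tangentially, hence are controlled by $\eps^2(\ns{\p^\alpha v}_{1,1}+\fe{m}^\eps+\fd{m}^\eps)$, matching \eqref{teqqes2}.

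\emph{Normal projection (proof of \eqref{teqq2}).} I take the same $\p^\alpha$-differentiated momentum equation and dot with $n=\n/|\n|$. Using (ii), $\naba\p^\alpha q\cdot n=(|\n|/J)\p_3\p^\alpha q+\text{tang}$, and $\p_3\p^\alpha q=\p_3\p^\alpha p+b\cdot\p_3\p^\alpha b$. Now decompose $\p_3\p^\alpha b$ in the frame $\{\tau^1,\tau^2,n\}$: the $n$-component is $(1/|\n|)\p_3\p^\alpha b\cdot\n$, which, by applying $\p^\alpha$ to \eqref{dee}, is expressible as a sum of tangential derivatives of $b$ and known $\eta_0^\delta$ data; hence $b\cdot\p_3\p^\alpha b=(b\cdot\tau^\gamma)(\p_3\p^\alpha b\cdot\tau^\gamma)+\text{tang}$. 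The term $b\cdot\naba\p^\alpha b\cdot n$ only contributes $\p_3\p^\alpha b\cdot n$, which is again tangential via \eqref{dee}. For the viscous term I use $\da=\naba\diva+\mathfrak{R}$: the main contribution becomes $\eps\,\naba(\diva\p^\alpha v)\cdot n=\eps(|\n|/J)\p_3\diva\p^\alpha v+\text{tang}$, while $\mathfrak{R}$-contributions, after the $\p_3$-to-tangential substitution \eqref{nneq1}, fall into the $\eps^2$-bracket of \eqref{teqqes2}. Multiplying through by $J/|\n|$ and collecting gives exactly \eqref{teqq2}.

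\emph{Remainder estimates and the main obstacle.} The estimate \eqref{teqqes2} follows by applying the trace/Sobolev bounds from Section 4 to the commutator terms $[\p^\alpha,\cdot]$ acting on $\rho,\a,1/\gamma p,1/(\a^Tb)_3,|\n|/J$, etc., together with \eqref{psies}. The first bracket absorbs the inviscid commutators (via $\mathfrak{F}_m^\eps$ and one-order normal loss terms already present in $\ns{\p^\alpha(p,b)}_{0,1}$ and $\ns{\p^\alpha\dt v}_0$); the $\eps^2$-bracket absorbs every piece coming from $\eps\,\da$ after the two $\p_3$-substitutions. The genuinely delicate step — and the one I expect to require the most care — is the viscous cancellation: the brute-force expansion of $\eps\,\p^\alpha\da v$ produces terms with two normal derivatives on $v$ that, if naively converted via \eqref{nneq1}, generate $\eps\p_3\dt b$ and $\eps\p_3\dt p$ that would spoil the identity; these must cancel precisely against the $b\cdot\naba b$ contribution in the $\tau^\beta$ equation and against the decomposition of $\p_3\p^\alpha q$ in the $n$ equation (this is the "crossing-term cancellation" previewed in Section 2.3). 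Verifying that only the stated coefficient $\eps|\n|^2/(J\a^Tb)_3^2$ survives, with every other $\eps$-term absorbable into the final $\eps^2$-bracket of \eqref{teqqes2}, is the main computational hurdle.
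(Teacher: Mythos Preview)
Your plan is essentially correct and follows the same route as the paper, with two clarifications worth noting.

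First, a procedural difference: the paper derives the projected identities at the \emph{undifferentiated} level first --- obtaining \eqref{dee1} and \eqref{dee2} for $\p_3 b\cdot\tau^\beta$ and $\p_3 p + \p_3 b\cdot\tau^\beta\, b\cdot\tau^\beta$ --- and only \emph{then} applies $\p^\alpha$, so that the commutators $[\p^\alpha,\cdot]$ act on scalar coefficients rather than on second-order operators. Your order (apply $\p^\alpha$ first, then project) is equivalent but generates more commutator bookkeeping. For the normal projection the paper does not invoke $\da v = \naba(\diva v) + \mathfrak{R}$; instead it contracts the $i$-indexed momentum equation \eqref{goog2} with $\a_{i3}$ and uses the purely algebraic identities \eqref{lb15}--\eqref{vss1} to extract $\p_3\diva v$ and $\p_3 b\cdot\tau^\beta\, b\cdot\tau^\beta$ directly, which sidesteps any curl--curl discussion.

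Second, and more importantly, your anticipated ``main computational hurdle'' is misplaced. The terms $\eps\,\p_3\dt b$ and $\eps\,\p_3\dt p$ that you worry about do \emph{not} appear in this lemma: the identities \eqref{teqq} and \eqref{teqq2} keep $\p_3\p^\alpha(b\cdot\naba v)$ and $\p_3\p^\alpha\diva v$ unsubstituted on the left-hand side. The substitutions $b\cdot\naba v=\dt b-\tfrac{b}{\gamma p}\dt p$ and $\diva v=-\tfrac{1}{\gamma p}\dt p$ are made only in the \emph{subsequent} energy estimate (Proposition~\ref{total2}, specifically \eqref{ee21}--\eqref{ee22}), and it is there --- not here --- that the underlined crossing terms cancel. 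This lemma is a mechanical rewriting with routine commutator bounds; there is no delicate cancellation to verify at this stage, so your proof will be simpler than you expect.
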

\begin{proof}

One first projects  the third equation in \eqref{MHDve} along $\tau^\beta $, $\beta=1,2$. For this, one writes
\beq
 \Delta_\a v= \a_{k3} \a_{k3} \p_3^2 v+\a_{k3}\p_3 \a_{k3} \p_3 v+\sum_{i+j<6}\a_{ki}\p_i(\a_{kj}\p_jv)
\eeq
and by the first identity in \eqref{nneq1},
\begin{align}
\p_3^2 v = \frac{ \p_3\( b\cdot\naba v\)}{(\a^Tb)_3} +\p_3\( \frac{1}{(\a^Tb)_3}\)b\cdot\naba v-\p_3\(\frac{(\a^Tb)_\beta\p_\beta v}{(\a^Tb)_3}\),
\end{align}
then   the third equation in \eqref{MHDve}  yields
\begin{align}\label{goog1}
(\a^Tb)_3  \p_3 b +\eps \frac{  \a_{k3} \a_{k3}}{(\a^Tb)_3 }   \p_3\( b\cdot\naba v\)  = \naba  q+\mathfrak{f},
\end{align}
where
\begin{align}
\mathfrak{f}:=&- (\a^Tb)_\beta\p_\beta b+\rho\partial_tv-\eps\sum_{i+j<6}\a_{ki}\p_i(\a_{kj}\p_jv)-\eps \a_{k3}\p_3\a_{k3} \p_3 v\nonumber
 \\& -\eps\a_{k3} \a_{k3}\( \p_3\( \frac{1}{(\a^Tb)_3}\)b\cdot\naba v-\p_3\(\frac{(\a^Tb)_\beta\p_\beta v}{(\a^Tb)_3}\)\)- \Psi^\epsd .
\end{align}
Hence, one deduces from \eqref{goog1} that, by \eqref{nndef},
\begin{align}\label{dee1}
  \p_3 b\cdot\tau^\beta  +\eps \frac{ \abs{\n}^2}{(J\a^Tb)_3^2}   \p_3\( b\cdot\naba v\)\cdot\tau^\beta   = G_{\tau^\beta } ,
\end{align}
where
\beq
G_{\tau^\beta }=\frac{1}{(\a^Tb)_3}\(\naba  q+\mathfrak{f}\)\cdot\tau^\beta .
\eeq
Note that $\naba  q \cdot\tau^\beta $ involves only $\p_1q,\p_2q$. Applying $\p^\al$ to \eqref{dee1} then yields \eqref{teqq} with
\begin{align}\label{eeqq2}
 G _{\tau^\beta }^\al= \p^\al  G _{\tau^\beta }-\[\pa^\al, \tau^\beta \]\cdot\p_3 b - \eps \[\p^\al,  \frac{ \abs{\n}^2}{(J\a^Tb)_3^2} \tau^\beta \]\cdot   \p_3\( b\cdot\naba v\).
\end{align}

Now one projects  the third equation in \eqref{MHDve} along $n$. First, the third equation in \eqref{MHDve}  yields
\beq\label{goog2}
  \a_{i3}  \p_3 p+ \a_{i3}\p_3  b_j b_j -b_k \a_{k3}\p_3 b_i-\varepsilon\a_{j3}\p_3 \p_j^\a v_i = \mathfrak{g}_i,
\eeq
where
\beq  \mathfrak{g}_i:=  \varepsilon\a_{j\beta}\p_\beta \p_j^\a v_i +b_k \a_{k\beta}\p_\beta b_i-\a_{i\beta}  \p_\beta p- \a_{i\beta}\p_\beta  b_j b_j -\rho\partial_tv_i + \Psi^\epsd_i .
\eeq
Note that
\begin{align}\label{lb15}
 \a_{i3}   \a_{i3}\p_3  b_j b_j  - \a_{i3}  b_k \a_{k3}\p_3  b_i
 = \a_{i3}   \a_{i3} \( \p_3  b\cdot b  -  \p_3b\cdot n b\cdot n\)= \a_{i3}   \a_{i3}\p_3  b\cdot\tau^\beta   b\cdot \tau^\beta
\end{align}
and
\begin{align}\label{vss1}
-\varepsilon \a_{i3}\a_{j3}\p_3 \p_j^\a v_i
&=-\varepsilon\a_{j3} \p_i^\a  \p_j^\a  v_i  +\varepsilon\a_{j3} \a_{i\beta}\p_\beta \p_j^\a  v_i \nonumber
\\&=-\varepsilon\a_{j3} \a_{j3}\p_3 \diva v -\varepsilon\a_{j\beta} \a_{j\beta}\p_\beta \diva v+\varepsilon\a_{j3} \a_{i\beta}\p_\beta \p_j^\a  v_i .
\end{align}
Hence, one deduces from \eqref{goog2} that, by \eqref{lb15} and \eqref{vss1},
\beq\label{dee2}
   \p_3 p+ \p_3  b\cdot\tau^\beta   b\cdot \tau^\beta   -\varepsilon \p_3 \diva v ={G}_n,
\eeq
where
\beq
{G}_n= \frac{1}{ \a_{i3}   \a_{i3}} \(\mathfrak{g}_j\a_{j3}+\varepsilon\a_{j\beta} \a_{j\beta}\p_\beta \diva v-\varepsilon\a_{j3} \a_{i\beta}\p_\beta \p_j^\a  v_i\).
\eeq
Applying $\p^\al$ to \eqref{dee2} then yields \eqref{teqq2} with
\beq\label{eeqq3}
 G _n^\al= \pa^\al G _n- \[\pa^\al, b\cdot \tau^\beta \tau^\beta \]\cdot \p_3  b.
\eeq

The estimate \eqref{teqqes2}  follows directly from these expressions of  $ G _{\tau^\beta }^\al $ and $G _n^\al$ .
\end{proof}

 \begin{prop}\label{total2}
For $t\in [0,T_0^\epsd]$ with $T_0^\epsd\le 1$, it holds that
\begin{align} \label{total2es}
  \int_0^t \fd{m}^{\varepsilon }
   \le    \me(t)\( \Mm+\eps\Mmd  +  t\fg{m}^\eps(t)  + \eps\fg{m}^\eps(t)     \) .
\end{align}
\end{prop}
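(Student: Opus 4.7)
The plan is to close the normal-derivative estimates by converting the identities \eqref{teqq} and \eqref{teqq2} into ODEs in time with the help of \eqref{tbb} and $\diva v=-\frac{1}{\gamma p}\dt p$, exploiting a precise cancellation of the two underlined crossing terms of \eqref{ode1}--\eqref{ode2}, and iterating on the number of normal derivatives. For $\al\in\mathbb{N}^{1+3}$ with $|\al|\le m-1$ and $\al_3=0$, I will substitute these two constitutive relations into \eqref{teqq} and \eqref{teqq2} and commute $\dt$ with $\p_3\p^\al$ modulo commutator errors already accommodated by \eqref{teqqes2}. This produces the coupled evolution system
\begin{align*}
\p_3\p^\al b\cdot\tau^\beta+\eps c_1\dt(\p_3\p^\al b\cdot\tau^\beta)-\eps c_1\tfrac{b\cdot\tau^\beta}{\gamma p}\dt(\p_3\p^\al p)&=\widetilde G_{\tau^\beta}^\al,\\
\p_3\p^\al p+(b\cdot\tau^\beta)(\p_3\p^\al b\cdot\tau^\beta)+\tfrac{\eps}{\gamma p}\dt(\p_3\p^\al p)&=\widetilde G_n^\al,
\end{align*}
with $c_1=\abs{\n}^2/(J\a^Tb)_3^2$ and the right-hand sides bounded in $L^2$ via \eqref{teqqes2}.

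The central algebraic step is to kill the $O(1)$ crossing term $(b\cdot\tau^\beta)(\p_3\p^\al b\cdot\tau^\beta)$ in the second equation by taking the linear combination $(\text{2nd})-(b\cdot\tau^\beta)(\text{1st})$; only an $\eps\dt(\cdot)$ residual coupling remains, absorbable into the energy identity for $\p_3\p^\al p$. Then I will multiply the first equation by $\p_3\p^\al b\cdot\tau^\beta$ and the decoupled second equation by $\p_3\p^\al p$, integrate over $\Omega\times[0,t]$, and apply \eqref{teqqes2}, Cauchy's inequality, the tangential bound \eqref{total1es41} from Proposition \ref{tane23}, the transported bound \eqref{etaest}, and \eqref{psies}, to obtain
\begin{equation*}
\ns{\p_3\p^\al p(t)}_0+\ns{\p_3\p^\al b\cdot\tau^\beta(t)}_0\le\me(t)\bigl(\Mm+\eps\Mmd+t^{1/2}\fg{m}^\eps(t)+\eps\,\fg{m}^\eps(t)\bigr).
\end{equation*}
The third component $\p_3\p^\al b\cdot\n$ follows directly from \eqref{dee} and $\p_3\p^\al v$ from \eqref{nneq1}, each at the cost of a time derivative on $(p,b)$ already controlled, completing the one-normal-derivative case.

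With the $\al_3=0$ base case in hand, I will induct on $\al_3\ge 1$: commuting $\p_3^{\al_3-1}$ through \eqref{teqq}, \eqref{teqq2}, \eqref{nneq1}, \eqref{dee} reduces the top-order normal component to quantities with strictly fewer normal derivatives, which are controlled by the inductive hypothesis together with the tangential bounds of Propositions \ref{tane} and \ref{tanest1}. This yields the non-viscous part $\sum_{j\le m-1}\ns{(\dt^j p,\dt^j b)}_{m-j}+\sum_{j\le m-2}\ns{\dt^j v}_{m-j}$ of $\fd{m}^\eps$. The remaining viscous piece $\eps^2\sum_{j\le m-1}\ns{\dt^j v}_{m-j+1}$ will be obtained by viewing $\eps\da\dt^j v=\rho\dt^{j+1}v+\dt^j\naba q-\dt^j(b\cdot\naba b)-\dt^j\Psi^\epsd$ from the third equation of \eqref{MHDve} as an elliptic equation for $\dt^j v$ with the boundary conditions of \eqref{MHDve}, whose right-hand side has $L^2$-norm bounded by $\sqrt{\fe{m}^\eps+\fd{m}^\eps}+\eps\Mmd$ through \eqref{psies}; squaring, multiplying by $\eps^2$, and integrating in time then produces the desired bound, with the extra $\eps$ providing the $\eps\fg{m}^\eps(t)$ contribution on the right of \eqref{total2es}.

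The main obstacle is the exact $O(1)$ cancellation between the two underlined crossing terms of \eqref{ode1}--\eqref{ode2}. Without it, the coupled system closes only with a loss of one normal derivative per inductive step, which destroys the scheme entirely. Its success hinges on the identical appearance of the factor $b\cdot\tau^\beta$ in the normal-projected momentum equation \eqref{teqq2} and in the $-\frac{b}{\gamma p}\dt p$ substitution for $b\cdot\naba v$ used inside \eqref{teqq}, which ultimately traces back to the specific algebraic form of \eqref{tbb} together with the projection onto $\tau^\beta$. A secondary but non-trivial bookkeeping task is to ensure at each inductive step that a gain of one normal derivative is paid for by one time derivative already controlled through the tangential estimates of the preceding section, so that the induction closes on $\fd{m}^\eps$.
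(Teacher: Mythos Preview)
Your overall architecture---substitute \eqref{tbb} and $\diva v=-\frac{1}{\gamma p}\dt p$ into \eqref{teqq}--\eqref{teqq2}, exploit an ODE-in-time structure, and iterate on the number of normal derivatives---matches the paper's. But the specific mechanism you propose for the cancellation is not the one that works, and with your stated multipliers the estimate does not close.

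Write $B_\beta=\p_3\p^\al b\cdot\tau^\beta$, $P=\p_3\p^\al p$, $b_\beta=b\cdot\tau^\beta$, $c_1=\abs{\n}^2/(J\a^Tb)_3^2$. Your decoupled pair, multiplied by $B_\beta$ and $P$ respectively, produces the two $\eps$-cross terms
\[
-\eps c_1\tfrac{b_\beta}{\gamma p}\,B_\beta\,\dt P
\qquad\text{and}\qquad
-\eps c_1 b_\beta\,P\,\dt B_\beta .
\]
Because of the extra factor $\tfrac{1}{\gamma p}$ in the first, their sum is \emph{not} a total time derivative, and neither can be bounded by $\eps\fg{m}^\eps$: at the top order $\al_0=m-1$ they contain $\p_3\dt^m p$ and $\p_3\dt^m b$, which are one derivative beyond anything controlled in $\fe{m}^\eps$ or $\fd{m}^\eps$. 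Tracing $\p_3\dt^m b$ back through the equations leads to $\eps^2\ns{\dt^{m-1}v}_2$, which sits inside $\fd{m}^\eps$ itself with no additional small factor, so the estimate becomes circular. Thus the claim that the $\eps\dt(\cdot)$ residuals are ``absorbable'' is false with your multipliers.

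The paper does not kill the $O(1)$ coupling by a linear combination; instead it \emph{squares} each of \eqref{teqq} and \eqref{teqq2}, after first multiplying \eqref{teqq} by $(J\a^Tb)_3/|\n|=c_1^{-1/2}$. That rescaling is what makes the two dangerous $\eps$-cross terms identical (both equal to $2\eps\tfrac{b_\beta}{\gamma p}B_\beta\dt P$ with opposite signs), and they cancel exactly. Equivalently, the correct test functions are $c_1^{-1}B_\beta$ for the first equation and $P+b_\beta B_\beta$ for the second (not $B_\beta$ and $P$); with those, the $O(1)$ part is automatically coercive as $c_1^{-1}B_\beta^2+(P+b_\beta B_\beta)^2$ and the $\eps$-cross terms cancel. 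A quick patch of your scheme would be to multiply your first equation by $\gamma p\,B_\beta$ rather than $B_\beta$, which makes the two cross terms sum to $-\eps c_1 b_\beta\,\dt(PB_\beta)$.

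Two smaller points. First, the displayed bound you write is pointwise in $t$ with no $\eps$ on the left; the energy identity you set up yields $\int_0^t(\ns{P}_0+\ns{B_\beta}_0)$ from the $O(1)$ terms and only $\eps(\ns{P(t)}_0+\ns{B_\beta(t)}_0)$ from the $\eps\dt$ terms, so the stated pointwise conclusion is too strong (though the $L^2$-in-time version suffices for \eqref{total2es}). Second, the paper does not obtain $\eps^2\sum_j\ns{\dt^j v}_{m-j+1}$ via elliptic regularity for $\da$; it reads these off directly from the $\eps^2$ squares $\eps^2\abs{\p_3\p^\al(b\cdot\naba v)\cdot\tau^\beta}^2$ and $\eps^2\abs{\p_3\p^\al\diva v}^2$ produced by squaring, together with \eqref{nneq1} and \eqref{dee}, and then runs the same $\ell$-iteration as for $(p,b)$. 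Your elliptic route would require care with the regularity of the variable coefficients $\a=\a(\nabla\eta)$ at the top order.
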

\begin{proof}
 Fix first $k=0,\dots, m-1$ and then $\ell=0,\dots, m-k-1$. Let $\al\in \mathbb{N}^{1+3}$ be with $\abs{\al}\le m-1$ such that $\al_0=k$ and $\al_3\le  \ell$.
 First, multiplying both sides of \eqref{teqq} by $(J\a^Tb)_3|\n|^{-1}$ and taking the square to integrate in $\tilde\Omega$, some region near $\Sigma\cup\Sigma_\pm$, one has
\begin{align}
 &\i_\tau ^\al+\sum_{\beta=1,2}\int_{\tilde \Omega}  \(\frac{ (J\a^Tb)_3^2}{|\n|^2}\abs{ \p_3 \p^\al b \cdot \tau^\beta  }^2 +\eps^2 \frac{|\n|^2 }{ (J\a^Tb)_3^2} \abs{   \p_3 \p^\al\( b\cdot\naba v\)\cdot\tau^\beta   }^2\)
 \nonumber\\&\quad = \sum_{\beta=1,2}\int_{\tilde \Omega} \frac{ (J\a^Tb)_3^2}{|\n|^2}\ \abs{G _{\tau^\beta }^\al }^2,
\end{align}
where
\beq
 \i_{\tau   }^\al =2\eps \int_{\tilde \Omega} \p_3 \p^\al b \cdot \tau^\beta      \p_3 \p^\al\( b\cdot\naba v\)\cdot\tau^\beta   .
\eeq
By \eqref{tbb}, one has
\begin{align}\label{ee21}
 \i_{\tau }^\al &=2\eps  \int_{\tilde \Omega} \p_3 \p^\al b \cdot \tau^\beta      \p_3 \p^\al\( \dt b-  \frac{b}{\gamma p}\dt p\)\cdot\tau^\beta   \nonumber
  \\
 &= \eps\dtt \sum_{\beta=1,2}\int_{\tilde \Omega} \abs{ \p_3 \p^\al b \cdot \tau^\beta   }^2   -2\eps  \int_{\tilde \Omega} \p_3 \p^\al b \cdot \tau^\beta      \p_3 \p^\al   b \cdot \dt\tau^\beta   \nonumber
  \\
 &\quad {-2\eps  \int_{\tilde \Omega} \p_3 \p^\al b \cdot \tau^\beta      b  \cdot\tau^\beta   \frac{1}{\gamma p}\p_3 \p^\al  \dt p  }-2\eps \int_{\tilde \Omega} \p_3 \p^\al b \cdot \tau^\beta      \[\p_3 \p^\al , \frac{b}{\gamma p}\] \dt p \cdot\tau^\beta
\nonumber
  \\
 & \ge \eps\dtt \sum_{\beta=1,2}\int_{\tilde \Omega} \abs{ \p_3 \p^\al b \cdot \tau^\beta   }^2   \underline{-2\eps  \int_{\tilde \Omega} \p_3 \p^\al b \cdot \tau^\beta      b  \cdot\tau^\beta   \frac{1}{\gamma p}\p_3 \p^\al  \dt p  }- \eps  \me(\fe{m}^\eps+\fd{m}^\eps) .
\end{align}
 Next,  take the square of \eqref{teqq2} and integrate in ${\tilde \Omega}$ to have
\begin{align}
 \i_{n}^\al+\int_{\tilde \Omega}   \abs{ \p_3\p^\al p+ \p_3 \p^\al b\cdot\tau^\beta   b\cdot \tau^\beta }^2 +\eps^2  \abs{   \p_3 \p^\al \diva v}^2 = \int_{\tilde \Omega}  \abs{G_n^\al }^2,
\end{align}
where
\beq
 \i_n^\al   =-2\eps\int_{\tilde \Omega}   (\p_3\p^\al p+ \p_3 \p^\al b\cdot\tau^\beta   b\cdot \tau^\beta ) \p_3 \p^\al \diva v  .
\eeq
By the second equation in \eqref{MHDve}, one has
 \beq\label{tpp2}
\p^\al   \diva v = -\frac{1}{\gamma p}\dt \p^\al  p-\[\p^\al ,\frac{1}{\gamma p}\]\dt p.
\eeq
Hence,
\begin{align}\label{ee22}
 \i_n^\al & =2\eps\int_{\tilde \Omega}   (\p_3\p^\al p+ \p_3 \p^\al b\cdot\tau^\beta   b\cdot \tau^\beta )\frac{1}{\gamma p} \p_3\dt \p^\al p\nonumber
 \\& \quad+2\eps\int_{\tilde \Omega}   (\p_3\p^\al p+ \p_3 \p^\al b\cdot\tau^\beta   b\cdot \tau^\beta )\(\p_3\(\frac{1}{\gamma p}\)\dt \p^\al p+ \p_3\( \[\p^\al ,\frac{1}{\gamma p}\]\dt p\)\)\nonumber
 \\& = \eps \dtt\int_{\tilde \Omega}  \frac{1}{\gamma p}\abs{ \p_3\p^\al p}^2 -\eps  \int_{\tilde \Omega}   \dt(\frac{1}{\gamma p})\abs{ \p_3\p^\al p}^2     {+2\eps \int_{\tilde \Omega}    \p_3 \p^\al b\cdot\tau^\beta   b\cdot \tau^\beta  \frac{1}{\gamma p} \p_3\dt \p^\al p}
 \nonumber
 \\&\quad+2\eps\int_{\tilde \Omega}   (\p_3\p^\al p+ \p_3 \p^\al b\cdot\tau^\beta   b\cdot \tau^\beta ) \(\p_3\(\frac{1}{\gamma p}\)\dt \p^\al p+ \p_3\( \[\p^\al ,\frac{1}{\gamma p}\]\dt p\)\)\nonumber
 \\& \ge \eps \dtt\int_{\tilde \Omega}  \frac{1}{\gamma p}\abs{ \p_3\p^\al p}^2     \underline{+2\eps  \int_{\tilde \Omega} \p_3 \p^\al b \cdot \tau^\beta      b  \cdot\tau^\beta   \frac{1}{\gamma p}\p_3 \p^\al  \dt p  }-\eps  \me(\fe{m}^\eps+\fd{m}^\eps) .
\end{align}
One can see that it is  crucial that there is an  exact cancelation for the two underlined terms in \eqref{ee21} and \eqref{ee22}, which themselves seem out of control. Hence,  combining \eqref{ee21} and \eqref{ee22} and by \eqref{teqqes2}, one gets
\begin{align}\label{hil1}
&\eps \dtt \int_{\tilde \Omega}\Big( \sum_{\beta=1,2}\abs{ \p_3 \p^\al b \cdot \tau^\beta   }^2 +   \frac{1}{\gamma p}\abs{ \p_3\p^\al p}^2\Big)
   \nonumber
\\&\quad+\int_{\tilde \Omega} \(\frac{ (J\a^Tb)_3^2}{|\n|^2}\sum_{\beta=1,2}\abs{ \p_3 \p^\al b \cdot \tau^\beta  }^2 +   \abs{ \p_3\p^\al p+ \p_3 \p^\al b\cdot\tau^\beta   b\cdot \tau^\beta }^2\)
 \nonumber
\\&\quad+\eps^2 \int_{\tilde \Omega}\(\frac{|\n|^2 }{ (J\a^Tb)_3^2} \sum_{\beta=1,2}\abs{   \p_3 \p^\al\( b\cdot\naba v\)\cdot\tau^\beta   }^2
 +   \abs{   \p_3 \p^\al \diva v}^2\)
 \nonumber
\\&\quad \le  \me\(\sum_{\beta=1,2}\ns{G_{\tau^\beta }^\al }_0+\ns{G_n^\al }_0+ \eps  (\fe{m}^\eps+\fd{m}^\eps)\)
\\&\quad
\le \me\Big( \ns{\p^\al\Psi^\epsd}_0+\mathfrak{F}_{m}^{\varepsilon}+\ns{(\p^\al p,\p^\al b)}_{0,1}+\ns{\p^\al \dt v}_{0}+\eps^2 \ns{\p^\al v}_{1,1}+ \eps  (\fe{m}^\eps+\fd{m}^\eps)\Big) .\nonumber
\end{align}
Integrating \eqref{hil1} in time and using \eqref{dee},
\beq
\p_3v\cdot\tau^\beta =\frac{1}{(\a^Tb)_3}\((b\cdot\naba)v\cdot\tau^\beta -(\a^Tb)_\gamma\p_\gamma v\cdot\tau^\beta \)
\eeq
and
\beq
\p_3v\cdot \n=J \diva v-J\a_{i\gamma}\p_\gamma v_i,
\eeq
one can   deduce that, by \eqref{psies} and \eqref{etaest},
\begin{align}\label{haha111}
& \int_0^t\(\ns{ (\p_3 \p^\al b ,\p_3\p^\al p)}_{L^2(\tilde\Omega)} +\eps^2 \ns{\p^\al v}_{H^2(\tilde\Omega)}\)
 \\&\quad \le  \Mm+\eps\Mmd  +\me(t)\int_0^t\Big( \ns{(\p^\al p,\p^\al b)}_{0,1}+\ns{\p^\al \dt v}_{0}+\eps^2 \ns{\p^\al v}_{1,1}+ \eps  (\fe{m}^\eps+\fd{m}^\eps)\Big) .\nonumber
\end{align}
Consequently, summing \eqref{haha111} over such $\al$ and by \eqref{total1es41}, one obtains
\begin{align} \label{ggod1}
& \int_0^t\(\ns{  (\dt^kp, \dt^kb)}_{\ell+1,m-1-k-\ell} + \varepsilon^2\ns{ \dt^k   v}_{\ell+2,m-1-k-\ell}\) \nonumber
 \\&\quad\le  \Mm+\eps\Mmd  + \me(t)\int_0^t \(\ns{(\dt^kp,\dt^k b)}_{\ell ,m-k-\ell} +\varepsilon^2\ns{  \dt^k  v}_{\ell+1,m-k-\ell}\)\nonumber
  \\&\qquad +\me(t)\int_0^t  \ns{\dt^{k+1} v}_{m-1-k} +\me(t)( t\fg{m}^\eps(t) +  \eps\fg{m}^\eps(t) ) .
\end{align}

Now taking $k=1,\dots,m-1$ in \eqref{ggod1}, by \eqref{tbb} and similarly as in Proposition \ref{tane23},  one deduces that for $\ell=0,\dots, m-k-1$,
\begin{align} \label{ggod2}
& \int_0^t\(\ns{  (\dt^kp,\dt^k b)}_{\ell+1,m-1-k-\ell} + \ns{  \dt^{k-1}  v}_{\ell+2,m-1-k-\ell}  + \varepsilon^2\ns{ \dt^k   v}_{\ell+2,m-1-k-\ell}\) \nonumber
 \\&\quad\le  \Mm+\eps\Mmd  + \me(t)\int_0^t\!\!\!\( \!\ns{(\dt^kp,\dt^k b)}_{\ell ,m-k-\ell} + \ns{  \dt^{k-1}  v}_{\ell+1,m-k-\ell}  +\varepsilon^2\ns{  \dt^k  v}_{\ell+1,m-k-\ell}\!\)\nonumber
  \\&\qquad +\me(t)\int_0^t  \ns{\dt^{k+1} v}_{m-1-k} +\me(t)(t\fg{m}^\eps (t)+  \eps\fg{m}^\eps(t) ) .
\end{align}
A suitable linear combination of   \eqref{ggod2}  for  $\ell=0,\dots, m-k-1$ yields that
\begin{align} \label{ggod3}
& \int_0^t\(\ns{  (\dt^kp,\dt^k b)}_{m-k} + \ns{  \dt^{k-1}  v}_{m-k+1} + \varepsilon^2\ns{ \dt^k   v}_{m-k+1}\) \nonumber
 \\&\quad\le  \Mm+\eps\Mmd  + \me(t)\int_0^t\( \ns{(\dt^kp,\dt^k b)}_{0 ,m-k}+\ns{  \dt^{k-1}  v}_{1,m-k} + \varepsilon^2\ns{  \dt^k  v}_{1,m-k}\)\nonumber
  \\&\qquad +\me(t)\int_0^t  \ns{\dt^{k+1} v}_{m-1-k} +\me(t)( t\fg{m}^\eps(t) +  \eps\fg{m}^\eps(t) )
  \nonumber
 \\&\quad\le  \Mm+\eps\Mmd  +   \me(t)\int_0^t  \ns{\dt^{k+1} v}_{m-1-k} +\me(t)(  t\fg{m}^\eps(t) +  \eps\fg{m}^\eps(t) )  .
\end{align}
Combining   \eqref{ggod3}  for  $k=1,\dots, m-1$  suitably again implies
\begin{align} \label{ggod4}
& \int_0^t\sum_{k=1}^{m-1}\(\ns{ (\dt^kp,\dt^k b)}_{m-k} + \ns{  \dt^{k-1}  v}_{m-k+1}  + \varepsilon^2\ns{ \dt^k   v}_{m-k+1}\) \nonumber
 \\& \quad
 \le  \Mm+\eps\Mmd  +\me(t)    \int_0^t\(\ns{\dt^{m} v}_{0} +\ns{\dt^{m-1} v}_{1}     \)+ \me(t)(  t\fg{m}^\eps(t) +  \eps\fg{m}^\eps(t) )
 \nonumber
 \\& \quad
 \le  \Mm+\eps\Mmd  +  \me(t)(  t\fg{m}^\eps(t) +  \eps\fg{m}^\eps (t)) .
\end{align}
On the other hand, taking $k=0$ in \eqref{ggod1} yields  that for $\ell=0,\dots, m-1$,
\begin{align} \label{ggod5}
& \int_0^t\(\ns{ ( p,b)}_{\ell+1,m-1-\ell}  +  \varepsilon^2\ns{   v}_{\ell+2,m-1-\ell}\) \nonumber
 \\&\quad\le  \Mm+\eps\Mmd  + \me(t)\int_0^t \( \ns{ ( p,b)}_{\ell ,m-\ell}  + \varepsilon^2\ns{  v}_{\ell+1,m-\ell}\) \nonumber
  \\&\qquad +\me(t)\int_0^t    \ns{\dt v}_{m-1}+ \me(t)(  t\fg{m}^\eps (t)+  \eps\fg{m}^\eps(t) )  .
\end{align}
Thus, a suitable linear combination of   \eqref{ggod5}  for  $\ell=0,\dots, m-1$ yields
\begin{align} \label{ggod6}
&  \int_0^t\(\ns{( p,b)}_{m}  + \varepsilon^2\ns{   v}_{m+1}\) \nonumber
 \\&\quad\le  \Mm+\eps\Mmd  + \me(t)\int_0^t\!\!\! \(\!\ns{ ( p,b)}_{0 ,m}   + \varepsilon^2\ns{  v}_{1,m}+\ns{\dt v}_{m-1} \! \)      +  \me(t)(  t\fg{m}^\eps(t) +  \eps\fg{m}^\eps(t))
   \nonumber
 \\& \quad
 \le  \Mm+\eps\Mmd  + \me(t)\int_0^t    \ns{\dt v}_{m-1}+ \me(t)(  t\fg{m}^\eps(t) +  \eps\fg{m}^\eps(t) )  .
\end{align}

Consequently, it follows from \eqref{ggod4} and \eqref{ggod6} that
\begin{align}
& \int_0^t\( \sum_{k=0}^{m-2}\ns{  \dt^{k}  v}_{m-k}+\sum_{k=0}^{m-1}\(\ns{  (\dt^kp, \dt^k b)}_{m-k} +    \varepsilon^2\ns{ \dt^k   v}_{m-k+1}\) \) \nonumber
\\&\quad\le  \Mm+\eps\Mmd  +   \me(t) \(t\fg{m}^\eps (t) +  \eps\fg{m}^\eps(t) \).
\end{align}
This gives \eqref{total2es}.
\end{proof}

\section{$(\epsd)$-independent local well-posedness of \eqref{MHDve}}\label{lasts2}

Now we can conclude the following $(\epsd)$-independent estimates.
\begin{thm}\label{apriori}
There exist positive constants $T_0$ and  $\delta_0$ with the property that for each $0<\delta<\delta_0$, one can find an $\eps_0=\eps_0(\delta)>0$ so that for $0<\eps\le \eps_0$, it holds that
\begin{align} \label{s80}
 \fg{m}^\eps(T_0)    \le \Mm
\end{align}
and that for $t\in [0,T_0],$
\beq\label{zero_12377}
\rho, p, |J|\ge \frac{c_0}{4}>0
\text{ in }\Omega\text{ and } |b\cdot \n|\ge \frac{c_0}{4}>0\text{ on }\Sigma\cup\Sigma_\pm.
\eeq
\end{thm}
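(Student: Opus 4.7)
The plan is to combine the estimates of Sections~\ref{lasts6}--\ref{lasts4} into a single closed bootstrap inequality for $\fg{m}^\eps$, close it on an interval independent of $(\eps,\delta)$ by continuity, and then read off the positivity conditions via the fundamental theorem of calculus. Summing Proposition~\ref{tres1}, Proposition~\ref{tane} (which captures the new boundary regularity $\abs{\eta}_m^2$ through the Cauchy--integral good unknown $\Xi^m$), Proposition~\ref{tanest1}, Proposition~\ref{tane23} (which recovers all normal derivatives from the transversality $(\a^T b)_3\neq 0$ together with \eqref{dividen22} and \eqref{tbb}), and Proposition~\ref{total2} (whose proof hinged on the exact cancellation of the two underlined cross terms in \eqref{ee21}--\eqref{ee22}) controls every term in the definition \eqref{fgdef}--\eqref{ddef} of $\fg{m}^\eps$, so on $[0,T_0^\epsd]$ one obtains
\begin{equation}
\fg{m}^\eps(t) \le \me(t)\bigl(\Mm + \eps\Mmd + t^{1/2}\fg{m}^\eps(t) + \eps\,\fg{m}^\eps(t)\bigr).
\end{equation}
Substituting \eqref{lalaes} for $\me(t)$ turns this into a closed scalar inequality in $\fg{m}^\eps(t)$, $\Mm$, $\eps\Mmd$, $t^{1/2}$, and $\eps$.

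Next, I would run a standard continuity/bootstrap argument. By \eqref{gesa122}, $\fg{m}^\eps(0) \le \Mm + \eps^2\Mmd$. Fix a large absolute constant $C_\star$ and let $T^\star \le T_0^\epsd$ denote the supremum of times on which $\fg{m}^\eps(t) \le C_\star\Mm$; local existence guarantees $T^\star > 0$. On $[0,T^\star]$, \eqref{lalaes} yields $\me(t) \le P(C_\star\Mm)$ for a polynomial $P$, so the above inequality becomes
\begin{equation}
\fg{m}^\eps(t) \le P(C_\star\Mm)\bigl(\Mm + \eps\Mmd + (t^{1/2}+\eps)\,C_\star\Mm\bigr).
\end{equation}
Choose $T_0 \in (0,1]$ small, depending only on $\Mm$, so that $P(C_\star\Mm)\,T_0^{1/2}\,C_\star\Mm \le C_\star\Mm/4$; fix $\delta_0>0$ so that \eqref{fadl1}--\eqref{gesa1} hold; and for each $0<\delta<\delta_0$ choose $\eps_0=\eps_0(\delta)>0$ so small that $P(C_\star\Mm)\,\eps_0\Mmd \le \Mm$ and $P(C_\star\Mm)\,\eps_0\,C_\star \le 1/4$. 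Then on $[0,\min(T_0,T^\star)]$ the strictly improved bound $\fg{m}^\eps(t) \le (C_\star/2)\,\Mm$ holds, and a standard continuation argument via Theorem~\ref{lwp} forces $T^\star \ge T_0$, yielding \eqref{s80}.

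For the positivity \eqref{zero_12377}, by \eqref{jequ} one has $\dt J = J\diva v$, and since $m\ge 4$ the Sobolev embedding $H^{m-1}\hookrightarrow L^\infty$ gives $\norm{\diva v}_{L^\infty(\Omega_\pm)} \le C\norm{v}_m$; integrating and applying Cauchy--Schwarz yields $\norm{J(t)-J_0^\delta}_{L^\infty} \le C t^{1/2}\bigl(\int_0^t\norm{v}_m^2\,ds\bigr)^{1/2} \le C t^{1/2}\sqrt{\fg{m}^\eps(t)}$. Shrinking $T_0$ further if needed (still independent of $\epsd$) and invoking \eqref{fadl1} preserves $|J|\ge c_0/4$; the lower bounds on $\rho$ and $p$ follow from \eqref{jequ1}; and the bound on $b\cdot\n$ is automatic since $(ii)$ of Proposition~\ref{prop1} (which also holds for \eqref{MHDve}) gives $b(t)\cdot\n(t)\equiv b_0^\delta\cdot\n_0^\delta$ on $\Sigma\cup\Sigma_\pm$, so \eqref{fadl1} gives $|b\cdot\n|\ge c_0/2$. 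The main obstacle is the simultaneous management of the $t^{1/2}$ and $\eps$ smallness in the bootstrap inequality: the $\eps\fg{m}^\eps$ term (from the viscous regularization in the normal estimates) has a coefficient depending polynomially on $C_\star\Mm$ and cannot be absorbed by shrinking $T_0$ alone, forcing $\eps_0$ to depend on $\delta$ through $\Mmd$; this is however consistent with the subsequent double limit in the paper, in which $\eps\to 0$ is taken first at fixed $\delta$, and $\delta\to 0$ only afterwards.
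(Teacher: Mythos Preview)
Your approach is essentially that of the paper, and the overall structure---summing the propositions, substituting \eqref{lalaes}, closing by a continuity argument, and reading off positivity via the fundamental theorem of calculus---is correct. There is, however, a slip in your bootstrap. On $[0,T^\star]$ you bound $\me(t)\le P(C_\star\Mm)$, but this discards the factor of $t$ in \eqref{lalaes}: what one actually has is $\me(t)\le \Mm+\eps\Mmd+tP(C_\star\Mm)$, which for $t$ and $\eps$ small is at most $2\Mm$, \emph{independently of} $C_\star$. With your cruder bound the leading term on the right of your second display is $P(C_\star\Mm)\cdot\Mm$; for a genuine polynomial $P$ this grows in $C_\star$ at least as fast as $C_\star\Mm$, so the strict improvement $\fg{m}^\eps(t)\le(C_\star/2)\Mm$ cannot be obtained by taking $C_\star$ ``large and absolute'' (in fact, enlarging $C_\star$ makes it worse). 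The paper avoids this by retaining the $t$-factor and reducing to $\fg{m}^\eps(t)\le\Mm\bigl(1+t^{1/2}P(\fg{m}^\eps(t))\bigr)$ after absorbing the $\eps\fg{m}^\eps$ term, from which the closure with $T_0=T_0(\Mm)$ is immediate.

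Your observation that $b\cdot\n\equiv b_0^\delta\cdot\n_0^\delta$ on $\Sigma\cup\Sigma_\pm$ by Proposition~\ref{prop1}~$(ii)$ (which indeed holds for \eqref{MHDve}) is a clean way to get the last positivity in \eqref{zero_12377}, in fact with the sharper bound $c_0/2$.
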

\begin{proof}
Note that by the definitions \eqref{mdef1} and \eqref{mdef12} of $\Mm$ and $\Mmd$, for each $0<\delta<\delta_0$ there exists an $\eps_0=\eps_0(\delta)>0$ so that for $0<\eps\le \eps_0$,
\beq\label{mmmddf}
\eps \Mmd \le \Mm.
\eeq
Then for $t\in [0,T_0^\epsd]$ with $T_0^\epsd\le 1$, by the definition \eqref{fgdef} of $\fg{m}^\eps(t)$, \eqref{total1es41}, \eqref{total2es}, \eqref{mmmddf} and \eqref{lalaes}, one deduces
\begin{align} \label{s84}
 \fg{m}^\eps(t) &\le  \me(t)\( \Mm  +\eps \Mmd +  t^{1/2}\fg{m}^\eps(t) +\eps  \fg{m}^\eps(t)  \)
 \nonumber\\  &\le  (\Mm+\eps\Mmd  +t P(\fg{m}^{\varepsilon}(t)))\( \Mm  +\eps \Mmd +  t^{1/2}\fg{m}^\eps(t) +\eps  \fg{m}^\eps(t)  \)
 \nonumber\\&  \le  \Mm   \(1 +t^{1/2} P( \fg{m}^\eps(t))+ \eps \fg{m}^\eps(t)  \).
\end{align}
Therefore,   for $0<\eps\le \eps_0$ by restricting $\eps_0$ smaller if necessary, it holds that
\begin{align} \label{s85}
 \fg{m}^\eps(t)   \le \Mm \(1 +t^{1/2} P( \fg{m}^\eps(t))  \).
\end{align}
From \eqref{s85}, there is a $T_0$, depending on $\Mm$ but not on $\epsd$, such that
\begin{align} \label{s86}
 \fg{m}^\eps(T_0)   \le \Mm  .
\end{align}
This yields \eqref{s80}.  The estimate \eqref{zero_12377} follows from the fundamental theorem of calculus, \eqref{fadl1} and \eqref{s80},  by restricting  $T_0$   smaller if necessary.
\end{proof}

We now present the
\begin{proof}[Proof of Theorem \ref{the_main2}]
With the estimates \eqref{s80} and \eqref{zero_12377} of Theorem \ref{apriori} in hand, following that for the compressible Navier--Stokes equations (see for instance \cite{DS} for the references), one can show routinely that
\begin{align} \label{s87}
 \sup_{[0,T_0]} \mathbb{E}_{2m}+ \int_0^{T_0} \mathbb{D}_{2m}\le  P_\eps( \fg{m}^\eps(T_0),\mathbb{E}_{2m}(0) )\le P_\epsd(\Mm).
\end{align}
The point here is that although the bound of $\mathbb{E}_{2m}(t)$ depends on $\epsd$,  it does not depend on $t\in [0,T_0]$. Now we indicate the dependence of the solutions to \eqref{MHDve} on $\epsd$ by $  (\eta^\epsd ,  p^\epsd,v^\epsd,b^\epsd   )$. Hence, by a standard continuity argument and the local well-posedness recorded in Theorem \ref{lwp}, the unique solution $  (\eta^\epsd ,  p^\epsd,v^\epsd,b^\epsd   )$ above exists actually on $[0,T_0]$ and satisfies the estimate \eqref{enesti2}.  The proof of Theorem \ref{the_main2} is thus completed.
\end{proof}

\section{Local well-posedness of \eqref{MHDv0}}\label{lasts}

Now we present the
\begin{proof}[Proof of Theorem \ref{the_main}]
By Theorem \ref{the_main2},  one can easily pass to the limit as first $\varepsilon\rightarrow 0$ and then $\delta\rightarrow 0$ in \eqref{MHDve} to find that  the limit $(\eta,p,v, b)$ of   $(\eta^\epsd, p^\epsd,v^\epsd, b^\epsd)$, up to extraction of a subsequence, solves  \eqref{MHDv0} on  $[0, T_0]$, where the constraint that $\diva b=0$ in $\Omega$ is recovered by using $(i)$ in Proposition \ref{prop1} as $\diverge_{\a_0}b_0=0$ in $\Omega$. Moreover, $(\eta,p,v, b)$ satisfies
  \beq \label{s91}
  \sup_{[0,T_0]}  \fe{m}^0 +\int_0^{T_0}\fd{m}^0  \le \Mm.
  \eeq
  To improve the estimates, we revisit those estimates in Sections \ref{lasts5} and \ref{lasts4} with setting $\eps=0$ to obtain
\beq \label{s93}
 \fe{m}^0(t) \le\mee^0(t)\Big( \mm+ t \sup_{[0,t]}\se{m}\Big)
\eeq
and
\beq\label{s95}
 \fd{m}^0 \le  \mee^0 \fe{m}^0,
\eeq
respectively.
Then   combining \eqref{s93} and \eqref{s95} yields
\beq \label{s913}
 \se{m}(t) \le\mee^0\Big( \mm+ t \sup_{[0,t]}\se{m}\Big)\le \Mm+ t P(\sup_{[0,t]}\se{m}).
\eeq
From \eqref{s913}, there is a $\tilde T_0$, depending on $\mm$, such that
\begin{align} \label{s8623}
 \se{m} (t)   \le \Mm  ,\ \forall t\in [0,\tilde T_0].
\end{align}
This yields \eqref{enesti} by resetting $T_0$.

Now for the uniqueness, consider two solutions $(\eta^i,p^i,v^i,b^i),\ i=1,2$ to  \eqref{MHDv0} on $[0,T_0]$  with the same   data $(\eta_0,p_0,v_0,b_0,\rho_0)$  which satisfy   the estimate \eqref{enesti}. Denote $\a^i=\a(\eta^i),\ \n^i=\n(\eta^i)$ and $J^i=J(\eta^i)$, $i=1,2.$ Denote $\bar f=f^1-f^2$ for the difference of each unknown $f$, and define
\beq\label{eedef111}
\seb{} :=\sum_{j=0 }^3\norm{(\dt^j \bar p,\dt^j \bar v,\dt^j \bar b)}_{3-j }^2 +\norm{\bar\eta}_3^2 +\abs{\bar\eta}_3^2 .
\eeq
To estimate the differences, one uses again   the   equations \eqref{MHDv0s} with setting $\eps=0$ for each solution. Define again the corresponding  good unknowns as
\begin{equation}
\label{gun1i}
 \mathcal{V}^i =  Z^3  v^i-   Z^3 \eta^i \cdot\nabai  v^i,\  \mathcal{Q}^i = Z^3  q^i-   Z^3 \eta^i \cdot\nabai  q^i
 \eeq
  and
  \beq
 \mathcal{B}^i =  Z^3 b^i-   Z^3 \eta_0   \cdot\nabla_{\a_0} b^i.
\end{equation}
Hence,
\begin{equation}
\label{gun1ib}
 \bar {\mathcal{V}}  =  Z^3  \bar v -    Z^3 \bar \eta \cdot\nabao   v^1 -  Z^3 \eta^2 \cdot \overline{\naba    v},\   \bar {\mathcal{Q}}= Z^3  \bar q -    Z^3 \bar \eta \cdot\nabao   q^1 -  Z^3 \eta^2 \cdot \overline{\naba    q}
 \eeq
  and
  \beq
 \bar {\mathcal{B}}=  Z^3 \bar b -   Z^3 \eta_0   \cdot\nabla_{\a_0} \bar b .
\end{equation}
Note that
\begin{equation}\label{MHDveermm2b}
\begin{split}
 &\jump{ \bar {\mathcal{Q}} }=- (J^1)^{-1} Z^3\bar \eta\cdot\n^1\jump{ \p_3 q^1}-  Z^3 \eta^2 \cdot \overline{J^{-1}\n\jump{\p_3q}},
 \\ &\jump{  \bar {\mathcal{B}} }=-  J_0 ^{-1} Z^3 \eta_0 \cdot\n_0\jump{ \p_3\bar b}, \quad \jump{ \bar {\mathcal{V}}} =0 \text{ on }\Sigma,\quad
 \bar {\mathcal{V}} =0     \text{ on }\Sigma_\pm.
 \end{split}
\end{equation}
One can proceed  as in the proof of Proposition \ref{tane}  to get, by \eqref{enesti},
 \begin{align}\label{ttt1b}
& \dfrac{1}{2}\dfrac{d}{dt}\int_{\Omega} J^1(\frac{1}{\gamma p^1}  \abs{ \bar {\mathcal{Q}}-b^1\cdot  \bar {\mathcal{B}}}^2 +\rho^1 \abs{ \bar {\mathcal{V}} }^2 + \abs{ \bar {\mathcal{B}} }^2 )
 \nonumber
\\&\quad    +\int_{\Omega}J^1 \divao (  \bar {\mathcal{Q}}    \bar {\mathcal{V}})- \int_\Omega J^1b^1\cdot\nabao ( \bar {\mathcal{B}}\cdot  \bar {\mathcal{V}})
 \le \mm  \seb{}.
\end{align}
By \eqref{MHDveermm2b} and since $\dt\bar\eta=\bar v$, one obtains
\begin{align}\label{gg2b}
& -\int_{\Omega}J^1 \divao (  \bar {\mathcal{Q}}    \bar {\mathcal{V}}) =\int_{\Sigma }\jump{ \bar {\mathcal{Q}} }   \bar {\mathcal{V}} \cdot\n^1  \nonumber
 \\&\quad=  -\int_{\Sigma }(     (J^1)^{-1} Z^3 \bar\eta\cdot\n^1\!\jump{ \p_3 q^1}+  Z^3 \eta^2 \cdot \overline{J^{-1}\n\jump{\p_3q}}) ( Z^3  \bar v -    Z^3 \bar \eta \cdot\nabao   v^1 -  Z^3 \eta^2 \cdot \overline{\naba    v})\cdot\n^1\nonumber
  \\&\quad\le  -\int_{\Sigma }(     (J^1)^{-1} Z^3\bar \eta\cdot\n^1\jump{ \p_3 q^1}+  Z^3 \eta^2 \cdot \overline{J^{-1}\n\jump{\p_3q}})  Z^3 \dt \bar \eta \cdot\n^1+\mm  \seb{}
  \nonumber
  \\&\quad\le  -\dtt\int_{\Sigma }\Big( \hal    (J^1)^{-1}\jump{ \p_3 q^1}\abs{ Z^3\bar \eta\cdot\n^1}^2+  Z^3 \eta^2 \cdot \overline{J^{-1}\n\jump{\p_3q}}   Z^3   \bar \eta \cdot\n^1\Big)+\mm  \seb{}
\end{align}
 and
\begin{align}\label{troublesomeb}
& \int_\Omega J^1b^1\cdot\nabao ( \bar {\mathcal{B}}\cdot  \bar {\mathcal{V}})
 =- \int_\Sigma b^1\cdot \n^1   \jump{\bar {\mathcal{B}}} \cdot \bar {\mathcal{V}}
  \nonumber
\\ &\quad=- \int_\Sigma b_0 \cdot \n_0    \jump{\bar {\mathcal{B}}} \cdot ( Z^3  \bar v -    Z^3 \bar \eta \cdot\nabao   v^1 -  Z^3 \eta^2 \cdot \overline{\naba    v})
 \nonumber
\\ &\quad\le  \int_\Sigma    b_0 \cdot \n_0  J_0 ^{-1} Z^3 \eta_0 \cdot\n_0\jump{ \p_3\bar b}\cdot   Z^3  \dt\bar \eta +\mm  \seb{} \nonumber
\\ &\quad\le \dtt \int_\Sigma    b_0 \cdot \n_0  J_0 ^{-1} Z^3 \eta_0 \cdot\n_0\jump{ \p_3\bar b}\cdot   Z^3   \bar \eta +\mm  \seb{} .
 \end{align}
By plugging \eqref{gg2b} and \eqref{troublesomeb} into \eqref{ttt1b} and then integrating in time,  using the fundamental theorem of calculous and the fact that the two solutions take the same initial data, one deduces that
\begin{align} \label{es11db}
  \ns{(\bar {\mathcal{Q}},\bar {\mathcal{V}},\bar {\mathcal{B}})(t) }_0  \le \mm \(  \as{Z^3\bar\eta(t) }_0 +\int_0^t \seb{}  \)  ,
\end{align}
which yields
\begin{align} \label{es11db2}
  \ns{(Z^3\bar p,Z^3\bar v,Z^3\bar b)(t)}_0  \le \mm \(  \as{Z^3\bar\eta(t)}_0 +\int_0^t \seb{}  \) .
\end{align}
Recalling Cauchy's integral   \eqref{re00}, one has
\begin{equation}\label{re00ewe}
\bar b= \rho^1\rho_0^{-1}\a_0^Tb_0 \cdot \nabla \bar \eta+\bar\rho \rho_0^{-1}\a_0^Tb_0 \cdot \nabla   \eta^2.
\end{equation}
Then as in the proof of Proposition \ref{tane}, one can deduce from \eqref{es11db2} that
\begin{align} \label{es11db2b2}
\ns{(\bar p,\bar v,\bar b)(t)}_{0,3}+\as{ \bar\eta (t)}_3  \le \mm  \int_0^t \seb{}   .
\end{align}
On the other hand, one can proceed as in the proof of Propositions \ref{tres1}, \ref{tanest1}, \ref{tane23}, \ref{total2} to estimate the other terms in $\seb{}$, which together with \eqref{es11db2b2} allows one to conclude that
\begin{align} \label{es11db2b2bb}
\seb{}(t) \le \mm \int_0^t \seb{}   .
\end{align}
This and the  Gronwall lemma imply $\seb{}(t)=0$ since $\seb{}(0)=0$, and hence the uniqueness follows. The proof of Theorem \ref{the_main2} is thus completed.
\end{proof}

\appendix

\section{Smoothing initial data}\label{datasec}

We now elaborate the smoothing process that regularizes the data $(\eta_0,p_0,v_0,b_0,\rho_0)$ of \eqref{MHDv0} given in Theorem \ref{the_main} to produce the smooth data $(\eta_0^\delta, p_0^\delta,v_0^\delta, b_0^\delta,\rho_0^\delta) $,  with
the smoothing parameter $\delta>0$.

Let $\Lambda_\delta,\phi_\delta$ be the standard mollifiers in $\mathbb{R}^2$ and $\mathbb{R}^3$, respectively, and $\mathcal{E}_{\Omega_\pm}$ be the Sobolev extension operator.  Let $ \tilde \rho_0^\delta :=\phi_\delta\ast \mathcal{E}_{\Omega_\pm}\rho_{0,\pm} $ in $\Omega_\pm$. Define $\tilde\eta_0^\delta $
as the solution to (see \cite{LM})
\begin{equation}\label{MHDvek0p1}
\begin{cases}
- \Delta^2  \tilde\eta_0^\delta    =- \Delta^2(\phi_\delta\ast \mathcal{E}_{\Omega_\pm}\eta_{0,\pm})  &\text{in } \Omega_\pm\\
\tilde\eta_0^\delta  = \Lambda_\delta\ast\eta_0,\quad  \p_3\tilde\eta_0^\delta  = \Lambda_\delta\ast\p_3\eta_0  &\text{on }\Sigma \cup\Sigma_\pm ,
 \end{cases}
\end{equation}
$\tilde p_0^\delta:=\mathcal{S}_{\Omega_\pm}p_{0,\pm} $ in $\Omega_\pm$ as the solution to
\begin{equation}\label{MHDvek0p2}
\begin{cases}
- \Delta   \tilde p_0^\delta    =- \Delta (\phi_\delta\ast \mathcal{E}_{\Omega_\pm}p_{0,\pm})  &\text{in } \Omega_\pm\\
\tilde p_0^\delta  = \Lambda_\delta\ast p_0  &\text{on }\Sigma \cup\Sigma_\pm ,
 \end{cases}
\end{equation}
$\tilde v_0^\delta:=\mathcal{S}_{\Omega_\pm}v_{0,\pm} $ in $\Omega_\pm$ and $\tilde b_0^\delta:=\mathcal{S}_{\Omega_\pm}b_{0,\pm} $ in $\Omega_\pm$. Then $(\tilde\eta_0^\delta, \tilde p_0^\delta,\tilde v_0^\delta, \tilde b_0^\delta,\tilde  \rho_0^\delta)\in C^\infty(\Omega_\pm) $, and by \eqref{iidd100} and \eqref{compatibility} with $j=0$, one has
\begin{equation}\label{ggdsa}
\jump{\tilde p_0^\delta} =0\text{ and }\jump{\tilde v_0^\delta} =\jump{\tilde b_0^\delta} =\jump{\tilde\eta_0^\delta} =\jump{\p_3\tilde\eta_0^\delta} =0\text{ on } \Sigma,\ \tilde v_0^\delta=0 \text{ and }\tilde\eta_{0,3}^\delta=\pm 1\text{ on }\Sigma_\pm.
\end{equation}
By the standard elliptic theory, it is straightforward to verify that
 \beq\label{convv}
\tilde\eta_0^\delta\rightarrow \eta_0 \text{ in }H^m(\Omega_\pm)\cap H^m(\Sigma)\text{ and } (  \tilde p_0^\delta,\tilde v_0^\delta, \tilde b_0^\delta,\tilde  \rho_0^\delta)\rightarrow (p_0,v_0,b_0,\rho_0) \text{ in }H^m(\Omega_\pm) \text{ as }\delta\rightarrow0.
 \eeq
Moreover, by \eqref{iidd1} and \eqref{convv}, one obtains that for $0<\delta<\delta_0$ with some $\delta_0>0$,
\beq\label{aa001}
\tilde\rho_0^\delta,\tilde p_0^\delta, |\tilde J_0^\delta|\ge \frac{c_0}{2}>0
\text{ in }\Omega\text{ and } |\tilde b_0^\delta\cdot \tilde\n_0^\delta|\ge \frac{c_0}{2}>0\text{ on }\Sigma\cup\Sigma_\pm,
\eeq
where $\tilde J_{0}^\delta=J(\tilde\eta_0^\delta)$ and $\tilde\n_{0}^\delta=\n(\tilde\eta_0^\delta)$.
Note that the smoothed data $(\tilde\eta_0^\delta, \tilde p_0^\delta,\tilde v_0^\delta, \tilde b_0^\delta,\tilde  \rho_0^\delta)$ satisfies the zero-th order compatibility conditions for \eqref{MHDv0} (cf. \eqref{compatibility} for $j=0$), but it may not satisfy any higher order compatibility conditions (cf. \eqref{compatibility} for $j\ge 1$). To adjust this, the idea is to add correctors to $(  \tilde p_0^\delta,\tilde v_0^\delta, \tilde b_0^\delta )$ so that the higher order compatibility conditions  hold. We may assume $\Omega=\mathbb{T}^2\times \mathbb{R}$ for simplicity; in such case, one only needs to add correctors to $(  \tilde p_{0,+}^\delta,\tilde v_{0,+}^\delta, \tilde b_{0,+}^\delta )$ and keep $(  \tilde p_{0,-}^\delta,\tilde v_{0,-}^\delta, \tilde b_{0,-}^\delta )$ unchanged.

To simplify the notations, we denote $U:=(p,v,b)$ and define
\begin{equation}\label{aa11}
\dt^j U^T:= \dt^{j-1}
\begin{pmatrix}
-\gamma p  \diva v   \\
 \rho^{-1}\(b\cdot\naba b-\naba  (p+\hal|b|^2)\)  \\
b\cdot\naba v-b\diva v
\end{pmatrix},\ j\ge 1,
\end{equation}
recursively, where  $\a=\a(\eta)$ and $\dt^{j}\eta=\dt^{j-1}v$ and $\dt^{j}\rho=-\dt^{j-1}(\rho \diva v)$ for $j\ge 1$ have been assumed. One finds that $\dt^j U$ depends on $\nabla\eta,\rho, \tilde\nabla^j U,\p_3^j U$, $j\ge 1$, where $\tilde\nabla^j$ denotes a collection of $\p^\al $ for $\al\in \mathbb{N}^3$ with $|\al|\le j$ and $\al_3<j$. Then we define
\beq\label{aa6}
 {\bf U}_{\nabla\eta,\rho}^j(\tilde\nabla^j U,\p_3^j U):=\dt^j U^T ,\ j\ge 0.
\eeq
Set
\begin{equation}\label{aa00123}
 {\bf V}_{\nabla\eta,\rho}^j(\tilde\nabla^j U,\p_3^j U)
:=
\begin{pmatrix}
  {\bf U}^j_p \\
 {\bf U}^j_v\cdot\tau^1 \\
 {\bf U}^j_v\cdot\tau^2 \\
  {\bf U}^j_v\cdot n \\
   {\bf U}^j_b\cdot\tau^1 \\
    {\bf U}^j_b\cdot\tau^2 \\
\end{pmatrix},\ j\ge 0
\text{ and }
 {\bf W}_{\nabla\eta}^j(\p_3^j U)
:=
\begin{pmatrix}
\p_3^j p \\
\p_3^jv\cdot\tau^1 \\
\p_3^jv\cdot\tau^2 \\
\p_3^jv\cdot n \\
\p_3^jb\cdot\tau^1 \\
\p_3^jb\cdot\tau^2 \\
\end{pmatrix}
,\ j\ge 1,
\end{equation}
where $(\tau^1,\tau^2,n)=(\tau^1,\tau^2,n)(\eta)$ and $  {\bf U}^j_p$ denotes the $p$-component (the first component) of $  {\bf U}^j$, etc. By \eqref{aa11}--\eqref{aa00123}, it holds that
\beq\label{vwf}
{\bf V}_{\nabla\eta,\rho}^j(\tilde\nabla^j U,\p_3^j U)=(\mathbb{E}_{\nabla\eta,\rho}(U))^j {\bf W}_{\nabla\eta}^j(\p_3^j U)+  \mathbb{F}_{\nabla\eta,\rho}^j(\tilde\nabla^j U),\ j\ge 1,
\eeq
where
\beq
\mathbb{E}_{\nabla\eta,\rho}(U):={\tiny\begin{pmatrix}
0&0&0&-\gamma p J^{-1}|\n|&0&0 \\
0&0&0&0&\rho^{-1} J^{-1}b\cdot\n&0\\
0&0&0&0&0&\rho^{-1} J^{-1}b\cdot\n \\
-\rho^{-1}J^{-1}|\n|&0&0&0&-\rho^{-1} J^{-1}|\n|b\cdot\tau^1&-\rho^{-1} J^{-1}|\n|b\cdot\tau^2  \\
0&J^{-1}b\cdot\n&0& J^{-1}|\n|b\cdot\tau^1&0&0  \\
0&0&J^{-1}b\cdot\n& J^{-1}|\n|b\cdot\tau^2&0&0  \\
\end{pmatrix}}.
\eeq
Here for simplicity we have not written out the explicit form of $\mathbb{F}_{\nabla\eta,\rho}^j(\tilde\nabla^j U)$ but  only state its dependences, and the key feature here is that it does not depend on $\p_3^j U$ when $j\ge 1$, that is, we separate the dependences of  ${\bf V}_{\nabla\eta,\rho}^j$ on $\p_3^j U$ and $ \tilde\nabla^j U$.

Now we construct the smoothed data that satisfies the higher order compatibility conditions as follows.
First,  define  $U_0^{\delta,(0)}:=( p_0^{\delta,(0)},v_0^{\delta,(0)}, b_0^{\delta,(0)})=(  \tilde p_0^\delta,\tilde v_0^\delta, \tilde b_0^\delta )$ as the zero-th order smooth approximation of $U_0:=(  p_0, v_0,  b_0 )$. Note that by \eqref{ggdsa}, $(\tilde\eta_0^\delta, U_0^{\delta,(0)},\tilde  \rho_0^\delta)$ satisfies the zero-th order compatibility conditions:
\beq
\jump{ {\bf V}_{\nabla\tilde\eta_0^\delta}^0(U_0^{\delta,(0)})}=0\text{ on }\Sigma,
\eeq
and by \eqref{aa001}, $\mathbb{E}_{\nabla\tilde\eta_{0}^\delta,\tilde\rho_{0,+}^\delta }(U_{0}^{\delta,(0)})$ (here, unlike the other terms, we have specified the subscript ``+" in  $\tilde\rho_{0,+}^\delta$ as $\tilde\rho_{0}^\delta$ may not be continuous across the interface $\Sigma$, etc.) is non-singular on $\Sigma$:
\beq
\det \mathbb{E}_{\nabla\tilde\eta_0^\delta,\tilde\rho_{0,+}^\delta }(U_0^{\delta,(0)})=\gamma \tilde p_0^\delta|\tilde\n_{0}^\delta|^2(\tilde\rho_{0,+}^\delta)^{-3}(\tilde J_{0}^\delta)^{-6}|\tilde b_0^\delta\cdot\tilde\n_{0}^\delta|^4>0\text{ on }\Sigma.
\eeq
Next, suppose that $j\ge 0$ and the $j$-th order smooth approximation $U_0^{\delta,(j)}:=( p_0^{\delta,(j)},v_0^{\delta,(j)}, b_0^{\delta,(j)})$ of $ U_0$ has been constructed so that $(\tilde\eta_0^\delta, U_0^{\delta,(j)},\tilde  \rho_0^\delta)$ satisfies the $j$-th order compatibility conditions:
\beq\label{jcompp}
\jump{ {\bf V}_{\nabla\tilde\eta_0^\delta,\tilde\rho_0^\delta}^\ell(\tilde\nabla^\ell U_0^{\delta,(j)}, \p_3^\ell U_0^{\delta,(j)})}=0,\ \ell=0,\dots,j
\eeq
and
\beq
\det \mathbb{E}_{\nabla\tilde\eta_{0}^\delta, \tilde\rho_{0,+}^\delta}(U_{0}^{\delta,(j)})>0\text{ on }\Sigma.
\eeq
Define
\beq\label{aa16}
\tilde \Phi^{\delta,(j+1)}=-\(\mathbb{E}_{\nabla\tilde\eta_{0}^\delta,\tilde\rho_{0,+}^\delta}(U_{0,+}^{\delta,(j)})\)^{-(j+1)} \jump{ {\bf V}_{\nabla\tilde\eta_0^\delta,\tilde\rho_0^\delta}^{j+1}(\tilde\nabla^{j+1} U_0^{\delta,(j)}, \p_3^{j+1} U_0^{\delta,(j)})} \text{ on }\Sigma
\eeq
and
\beq \label{aa1666}
\Phi^{\delta,(j+1)}:=\begin{pmatrix}
\tilde \Phi^{\delta,(j+1)}_1 \\
\tilde \Phi^{\delta,(j+1)}_2\tilde\tau_0^{1,\delta}+\tilde \Phi^{\delta,(j+1)}_3\tilde\tau_0^{2,\delta}+\tilde \Phi^{\delta,(j+1)}_4\tilde n_0^\delta\\
\tilde \Phi^{\delta,(j+1)}_5\tilde \tau_0^{1,\delta}+\tilde \Phi^{\delta,(j+1)}_6\tilde \tau_0^{2,\delta}
\end{pmatrix}\text{ on }\Sigma,
\eeq
where $(\tilde\tau_0^{1,\delta},\tilde\tau_0^{2,\delta},\tilde n_0^\delta)=(\tau^1,\tau^2,n)(\tilde\eta_0^\delta)$.
Then we construct the $(j+1)$-th order smooth approximation $U_0^{\delta,(j+1)}:=( p_0^{\delta,(j+1)},v_0^{\delta,(j+1)}, b_0^{\delta,(j+1)})$ of $ U_0$   such that
\beq\label{aa099jjj}
\p_3^\ell U_{0,+}^{\delta,(j+1)}= \p_3^\ell U_{0,+}^{\delta,(j)},\ \ell=0,\dots,j\text{ and }\p_3^{j+1} U_{0,+}^{\delta,(j+1)}= \p_3^{j+1} U_{0,+}^{\delta,(j)}
+\Phi^{\delta,(j+1)}\text{ on }\Sigma.
\eeq
The existence of such $U_0^{\delta,(j+1)}$ is standard (see \cite{LM}).
It then follows  that
\beq \label{aa099}
\jump{ {\bf V}_{\nabla\tilde\eta_0^\delta,\tilde\rho_0^\delta }^\ell(\tilde\nabla^\ell U_0^{\delta,(j+1)},\p_3^\ell U_0^{\delta,(j+1)})}=\jump{ {\bf V}_{\nabla\tilde\eta_0^\delta,\tilde\rho_0^\delta}^\ell(\tilde\nabla^\ell U_0^{\delta,(j)},\p_3^\ell U_0^{\delta,(j)})}=0\text{ on }\Sigma,\ \ell=0,\dots,j,
\eeq
\beq
\det \mathbb{E}_{\nabla\tilde\eta_0^\delta,\tilde\rho_{0,+}^\delta }(U_{0,+}^{\delta,(j+1)})=\det \mathbb{E}_{\nabla\tilde\eta_0^\delta,\tilde\rho_{0,+}^\delta }(U_{0,+}^{\delta,(j)})>0\text{ on }\Sigma,
\eeq
\beq\label{aa17}
{\bf W}_{\nabla\tilde\eta_0^\delta}^{j+1}(\p_3^{j+1} U^{\delta,(j+1)}_{0,+})={\bf W}_{\nabla\tilde\eta_0^\delta}^{j+1}(\p_3^{j+1} U^{\delta,(j)}_{0,+})+\tilde \Phi^{\delta,(j+1)}\text{ on }\Sigma
\eeq
and
\beq\label{aa1788}
\mathbb{F}_{\nabla\tilde\eta_0^\delta,\tilde\rho_{0,+}^\delta}^{j+1}(\tilde\nabla^{j+1} U^{\delta,(j+1)}_{0,+})=\mathbb{F}_{\nabla\tilde\eta_0^\delta,\tilde\rho_{0,+}^\delta}^{j+1}(\tilde\nabla^{j+1} U^{\delta,(j)}_{0,+})
\text{ on }\Sigma.
\eeq
Then by \eqref{vwf}, \eqref{aa17} and \eqref{aa16}, one has
\begin{align}
&\jump{ {\bf V}_{\nabla\tilde\eta_0^\delta,\tilde\rho_0^\delta }^{j+1}(\tilde\nabla^{j+1} U_0^{\delta,(j+1)},\p_3^{j+1} U_0^{\delta,(j+1)})}
\nonumber
\\&\quad=\jump{ \(\mathbb{E}_{\nabla\tilde\eta_0^\delta,\tilde\rho_0^\delta }(U_{0}^{\delta,(j+1)})\)^{j+1} {\bf W}_{\nabla\tilde\eta_0^\delta}^{j+1}(\p_3^{j+1} U_0^{\delta,(j+1)})}+ \jump{  \mathbb{F}_{\nabla\tilde\eta_0^\delta,\tilde\rho_0^\delta }^{j+1}(\tilde\nabla^{j+1} U_0^{\delta,(j+1)})}\nonumber
\\&\quad=\jump{ \(\mathbb{E}_{\nabla\tilde\eta_0^\delta,\tilde\rho_0^\delta }(U_{0}^{\delta,(j)})\)^{j+1} {\bf W}_{\nabla\tilde\eta_0^\delta}^{j+1}(\p_3^{j+1} U_0^{\delta,(j+1)})}+ \jump{  \mathbb{F}_{\nabla\tilde\eta_0^\delta,\tilde\rho_0^\delta }^{j+1}(\tilde\nabla^{j+1} U_0^{\delta,(j)})}\nonumber
\\&\quad=\jump{ {\bf V}_{\nabla\tilde\eta_0^\delta,\tilde\rho_0^\delta}^{j+1}(\tilde\nabla^{j+1} U_0^{\delta,(j)},\p_3^{j+1} U_0^{\delta,(j)})}+\(\mathbb{E}_{\nabla\tilde\eta_0^\delta,\tilde\rho_{0,+}^\delta }(U_{0,+}^{\delta,(j)})\)^{j+1}\tilde \Phi^{\delta,(j+1)}
=0.
\end{align}
This together with \eqref{aa099} implies that $(\tilde\eta_0^\delta, U_0^{\delta,(j+1)},\tilde\rho_0^\delta) $ satisfies the $(j+1)$-th order compatibility conditions. Consequently, one can construct recursively the smooth  data that satisfies any high order compatibility conditions.

Now  the  desired smoothed data  is constructed as $(\eta_0^\delta, p_0^\delta,v_0^\delta, b_0^\delta,\rho_0^\delta):=(\tilde\eta_0^\delta, U_0^{\delta,(m-1)},\tilde\rho_0^\delta) $. Note that by \eqref{convv}, $\eta_0^\delta\rightarrow \eta_0$ in $H^m(\Omega_\pm)\cap H^m(\Sigma)$ and $\rho_0^\delta\rightarrow \rho_0$ in $H^m(\Omega_\pm)$ as $\delta\rightarrow0$, however, in general,
$( p_0^\delta,v_0^\delta, b_0^\delta)$ does not converge to $( p_0 ,v_0 , b_0 )$. But if $(\eta_0,p_0,v_0,b_0,\rho_0)$ satisfy the $(m-1)$-th order compatibility conditions:
\beq\label{jcomppjj}
\jump{ {\bf V}_{\nabla \eta_0,\rho_0}^\ell(\tilde\nabla^\ell U_0, \p_3^\ell U_0)}=0,\ \ell=0,\dots,m-1,
\eeq
then $(  p_0^\delta,v_0^\delta, b_0^\delta )\rightarrow ( p_0 ,v_0 , b_0 )$ in $H^m(\Omega_\pm)$ as $\delta\rightarrow0$. Indeed, we claim that $U_0^{\delta,(j)}\rightarrow U_0$ in $H^m(\Omega_\pm)$ as $\delta\rightarrow0$ for $j=0,\dots,m-1$. We prove the claim by induction on $j$. First, for $j=0$, no correctors have been added and so it is direct to have that $U_0^{\delta,(0)}\rightarrow U_0$ in $H^m(\Omega_\pm)$ as $\delta\rightarrow0$. Next, suppose that $j\in [0,m-2]$ and that $U_0^{\delta,(\ell)}\rightarrow U_0$ in $H^m(\Omega_\pm)$ for $\ell=0,\dots,j$  as $\delta\rightarrow0$ have been proved. Then by the definition of the correction $\Phi^{\delta,(j+1)}$ (cf. \eqref{aa16} and \eqref{aa1666}), the induction assumption and \eqref{jcomppjj} with $\ell=j+1$, one deduces that $\Phi^{\delta,(j+1)}\rightarrow0$ in $H^{m-j-3/2}(\Sigma)$ as $\delta\rightarrow0$. So by the definition of $U_0^{\delta,(j+1)}$ (cf. \eqref{aa099jjj}), $U_0^{\delta,(j+1)}-U_0^{\delta,(j)}\rightarrow 0$ in $H^m(\Omega_\pm)$ as $\delta\rightarrow0$, which together with the induction assumption again implies that $U_0^{\delta,(j+1)}\rightarrow U_0$ in $H^m(\Omega_\pm)$ as $\delta\rightarrow0$. The claim is thus proved, and the construction of the smoothed data is completed.

\section{Anisotropic trace estimates}
\begin{lem}\label{tra_es}
Assume that $\bar B\in \Rn{3}$ with  $|\bar B_3|\ge \theta>0$ for $0\le |x_3|\le \iota$.
Then it holds that
\beq\label{sbt}
  \abs{ f}_0^2 \le C_{\bar B,\theta,\iota} \(\norm{\bar B  \cdot \nabla f}_0\norm{ f}_0+ \ns{f}_0 \),
\eeq
where $C_{\bar B,\theta,\iota}$ is a positive constant depending on the $C^1$ norm of $\bar B$, $\theta$ and $\iota$.
\end{lem}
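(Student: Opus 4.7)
The plan is to lift the boundary value on $\Sigma$ to an interior integral via the fundamental theorem of calculus, and then use the transversality hypothesis to trade the normal derivative $\partial_3 f$ for $\bar B\cdot\nabla f$ plus tangential derivatives. Concretely, pick a cutoff $\chi\in C^\infty(\mathbb{R})$ with $\chi(0)=1$, $\mathrm{supp}\,\chi\subset(-\iota,\iota)$, and write, for a.e.\ $(x_1,x_2)\in\mathbb{T}^2$,
\begin{equation*}
|f(x_1,x_2,0)|^2 = -\int_0^{\iota}\partial_3\bigl(\chi(x_3)|f(x_1,x_2,x_3)|^2\bigr)\,dx_3 = -\int_0^{\iota}\bigl(\chi'|f|^2 + 2\chi f\,\partial_3 f\bigr)\,dx_3,
\end{equation*}
and treat the side $x_3\in(-\iota,0)$ symmetrically. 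Since $|\bar B_3|\ge\theta$ on $[-\iota,\iota]$, I can solve for the normal derivative:
\begin{equation*}
\partial_3 f = \frac{1}{\bar B_3}\bigl(\bar B\cdot\nabla f - \bar B_\beta\,\partial_\beta f\bigr),\qquad \beta=1,2.
\end{equation*}

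Substituting and integrating over $(x_1,x_2)\in\mathbb{T}^2$, I obtain three types of terms. The $\bar B\cdot\nabla f$ contribution is controlled directly by Cauchy--Schwarz:
\begin{equation*}
\left|\int_{\mathbb{T}^2}\!\!\int_0^{\iota}\!\frac{2\chi f}{\bar B_3}\,\bar B\cdot\nabla f\,dx_3\,dx_1 dx_2\right|\le \tfrac{2\|\chi\|_\infty}{\theta}\,\|\bar B\cdot\nabla f\|_0\,\|f\|_0.
\end{equation*}
The tangential pieces I handle by integrating by parts in $x_\beta$, exploiting that $\mathbb{T}^2$ has no boundary:
\begin{equation*}
\int_{\mathbb{T}^2}\!\!\int_0^{\iota}\frac{2\chi\bar B_\beta}{\bar B_3}f\,\partial_\beta f = \int_{\mathbb{T}^2}\!\!\int_0^{\iota}\frac{\chi\bar B_\beta}{\bar B_3}\partial_\beta|f|^2 = -\int_{\mathbb{T}^2}\!\!\int_0^{\iota}\partial_\beta\!\left(\frac{\chi\bar B_\beta}{\bar B_3}\right)|f|^2,
\end{equation*}
and this last quantity is bounded by $C_{\bar B,\theta,\iota}\|f\|_0^2$, where the constant depends on $\|\bar B\|_{C^1}$ through $\partial_\beta\bar B_\beta$ and $\partial_\beta\bar B_3$ and on $\theta^{-2}$ through $1/\bar B_3^2$. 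The $\chi'|f|^2$ term is likewise bounded by $C\|f\|_0^2$. Combining yields the estimate \eqref{sbt}.

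The whole argument is standard, and no single step is really an obstacle; the only care needed is to verify that all the constants appearing (from $\chi$, from $1/\bar B_3$, from derivatives of $\bar B_\beta/\bar B_3$) depend only on $\|\bar B\|_{C^1}$, $\theta$, and $\iota$, as asserted. If $f$ is originally defined on both sides of $\Sigma$, I apply the above separately to $f_\pm$ on $(0,\iota)$ and $(-\iota,0)$ and sum.
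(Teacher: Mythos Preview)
Your proof is correct but follows a genuinely different route from the paper. The paper's argument is Lagrangian: it normalizes $\tilde B=\bar B/\bar B_3=(\tilde B_h,1)$, introduces the 2D flow $Y(x_1,x_2;s)$ of $\tilde B_h$ parametrized by height $s=x_3$, and applies the fundamental theorem of calculus \emph{along these characteristics}, so that the derivative appearing is exactly $\tilde B\cdot\nabla f$ from the start. The price is a change of variables $(y_1,y_2)=Y(x_1,x_2;s)$ whose Jacobian must be bounded, and this is where the $C^1$ dependence on $\bar B$ enters; an additional integration in $y_3$ over $(0,\iota)$ produces the $\iota^{-1}\|f\|_0^2$ term. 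Your argument is Eulerian: you apply the fundamental theorem of calculus straight in $x_3$ with a cutoff, then algebraically substitute $\partial_3 f=\bar B_3^{-1}(\bar B\cdot\nabla f-\bar B_\beta\partial_\beta f)$ and integrate the tangential piece by parts on $\mathbb{T}^2$. The $C^1$ dependence on $\bar B$ enters instead through $\partial_\beta(\bar B_\beta/\bar B_3)$. Your approach is shorter and avoids the flow-map machinery; the paper's approach is more geometric and makes explicit that the estimate is a trace along field lines. Both yield constants with the same stated dependence, and both are most cleanly justified first for smooth $f$ and then by density.
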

\begin{proof}
Without loss of generality, assume that $\bar B_3\ge \theta>0$ for $0\le x_3\le \iota$.
Set $ \tilde B:= \bar B/\bar B_3=(\tilde B_h ,1)$ and define the {\it 2D} trajectory $Y=Y(x_1,x_2;s)$ by
\beq
\begin{cases}
\dis\frac{ d }{ds}Y(x_1,x_2;s)=\tilde B_h (Y(x_1,x_2;s),s)
\\ Y(x_1,x_2;0)=(x_1,x_2).
\end{cases}
\eeq
Then for any $0\le y_3\le \iota$, by the fundamental theorem of calculus,
\begin{align}
f^2(x_1,x_2,0)&=f^2(Y(x_1,x_2;0), 0)
=f^2(Y(x_1,x_2;y_3), y_3)-\int_0^{y_3}\dts \(f^2(Y(x_1,x_2;s), s)\)ds\nonumber
\\& =f^2(Y(x_1,x_2;y_3), y_3)-2\int_0^{y_3}(\tilde   B\cdot \nabla f f)(Y(x_1,x_2;s), s)\,ds.
\end{align}
Integrating over $(x_1,x_2)\in \mathbb{T}^2$ and using the Fubini theorem, one has
\begin{align}\label{Trac1}
\as{f}_0=\int_{\mathbb{T}^2}f^2(Y(x_1,x_2;y_3), y_3) \,dx_1dx_2
-2\int_0^{y_3}\int_{\mathbb{T}^2}(\tilde B\cdot \nabla f f)(Y(x_1,x_2;s), s)\,dx_1dx_2ds.
\end{align}
For any fixed $0\le s\le \iota,$ by a change of variables $(y_1,y_2)=Y(x_1,x_2;s)$, with the Jacobian $J(x_1,x_2;s):= {\rm det} \nabla Y (x_1,x_2;s)$ given by
\beq
J(x_1,x_2;s)= \exp \(-\int_0^{s}(\p_1 \tilde B_1+\p_2 \tilde B_2)(Y(x_1,x_2;\tau),\tau)d\tau\),
\eeq
one has that for any $0\le s\le \iota,$
\beq
C_{\tilde B,\iota}^{-1}\le J(x_1,x_2;s)\le C_{\tilde B,\iota}:=\exp \(\norm{\p_1 \tilde B_1+\p_2\tilde B_2}_{L^\infty}\iota \).
\eeq
Hence, for any $0\le y_3\le \iota$, it holds that
\begin{align}\label{Trac2}
\int_{\mathbb{T}^2}f^2(Y(x_1,x_2,y_3), y_3) \,dx_1dx_2
&=\int_{\mathbb{T}^2}f^2(y_1, y_2, y_3) J^{-1}(y_1,y_2;y_3)\,dy_1dy_2\nonumber
\\&\le C_{\tilde B,\iota}\int_{\mathbb{T}^2}f^2(y_1, y_2, y_3) \,dy_1dy_2
\end{align}
and similarly, by the Cauchy-Schwarz inequality, one has
\begin{align}\label{Trac3}
&-2\int_0^{y_3}\int_{\mathbb{T}^2}(\tilde B\cdot \nabla f f)(Y(x_1,x_2;s), s)dx_1dx_2ds\nonumber\\
&\quad\le 2 \!\(\!\int_0^{y_3}\int_{\mathbb{T}^2}(\tilde B\cdot \nabla f  )^2(Y(x_1,x_2;s), s)dx_1dx_2ds\!\)^{1/2}\!\!\!\(\!\int_0^{y_3}\int_{\mathbb{T}^2} f  ^2(Y(x_1,x_2;s), s)dx_1dx_2ds\!\)^{1/2}
\nonumber\\
&\quad\le 2C_{\tilde B,\iota}\(\int_0^{y_3}\int_{\mathbb{T}^2}(\tilde B\cdot \nabla f  )^2(y_1,y_2, s)dy_1dy_2ds\)^{1/2}\!\(\int_0^{y_3}\int_{\mathbb{T}^2} f  ^2(y_1,y_2, s)dy_1dy_2ds\)^{1/2}
\nonumber\\
&\quad\le 2C_{\tilde B,\iota} \norm{\tilde   B\cdot \nabla f}_0\norm{ f}_0
\end{align}
Combining \eqref{Trac1}, \eqref{Trac2} and \eqref{Trac3} yields
\begin{align}\label{pp1}
\as{f}_0\le C_{\tilde B,\iota}\int_{\mathbb{T}^2}f^2(y_1, y_2, y_3) dy_1dy_2+2C_{\tilde B,\iota} \norm{\tilde   B\cdot \nabla f}_0\norm{ f}_0.
\end{align}
Integrating \eqref{pp1} over $y_3\in (0,\iota)$, one has
\begin{align}
\iota\as{f}_0\le C_{\tilde B,\iota}\int_0^\iota\int_{\mathbb{T}^2}f^2(y_1, y_2, y_3) dy_1dy_2dy_3+2C_{\tilde B,\iota}\iota \norm{\tilde B\cdot \nabla f}_0\norm{ f}_0,
\end{align}
which implies, recalling $ \tilde B= \bar B/\bar B_3$,
\begin{align}
\as{f}_0&\le C_{\tilde B,\iota}\iota^{-1} \ns{f}_0+2C_{\tilde B,\iota} \norm{\bar B_3^{-1}\bar   B\cdot \nabla f}_0\norm{ f}_0\nonumber
\\&\le C_{\tilde B,\iota}\iota^{-1} \ns{f}_0+4C_{\tilde B,\iota}\theta^{-1} \norm{\bar   B\cdot \nabla f}_0\norm{ f}_0.
\end{align}
This gives \eqref{sbt} by redefining the constant $C_{\bar B,\theta,\iota}$.
\end{proof}




\vspace{0.5cm}

\end{document}